\definecolor{rouge}{rgb}{0.7,0.00,0.00}
\definecolor{vert}{rgb}{0.00,0.5,0.00}
\definecolor{bleu}{rgb}{0.00,0.00,0.8}
\newtheorem{theorem}{Theorem}[section]
\newtheorem*{theorem*}{Theorem}
\newtheorem{lemma}[theorem]{Lemma}
\newtheorem{corollary}[theorem]{Corollary}
\newtheorem{proposition}[theorem]{Proposition}
\newtheorem{condition}{Condition}
\newtheorem{conditionA}{A\kern-0.1mm}
\renewcommand\dots{\hbox to 1em{.\hss.\hss.}}
\theoremstyle{definition}
\newtheorem{example}[theorem]{Example}
\newtheorem{remark}[theorem]{Remark}
\numberwithin{equation}{section}
\def\bb#1{\mathbb{#1}}
\def\bf#1{\mathbf{#1}}
\def\bbm#1{\mathbbm{#1}}
\def\geq{\geqslant}
\def\leq{\leqslant}
\newcommand\ee{\varepsilon}
\DeclareMathOperator{\supp}{supp}
\DeclarePairedDelimiter\floor{\lfloor}{\rfloor}
\begin{document}

\title[A zero-one law for invariant measures]
{A zero-one law for invariant measures \\ and a local limit theorem for coefficients of \\ 
random walks on the general linear group}


\author{Ion Grama}
\author{Jean-Fran\c cois Quint}
\author{Hui Xiao}

\curraddr[Grama, I.]{ Universit\'{e} de Bretagne-Sud, CNRS 6205, Vannes, France.}
\email{ion.grama@univ-ubs.fr}

\curraddr[Quint, J.-F.]{ CNRS-Universit\'{e} de Bordeaux I, 33405, Talence, France.}
\email{Jean-Francois.Quint@math.u-bordeaux.fr}

\curraddr[Xiao, H.]{ Universit\'{e} de Bretagne-Sud, CNRS 6205, Vannes, France.}
\email{hui.xiao@univ-ubs.fr}


\begin{abstract}
We prove a zero-one law for the stationary measure for algebraic sets
generalizing the results of Furstenberg \cite{Furst BTSPHS-73} 
and Guivarc'h and Le Page \cite{GL16}. 
As an application, we establish a local limit theorem for 
the coefficients of random walks on the general linear group. 
\end{abstract}

\date{\today}
\subjclass[2010]{Primary 60B15, 15B52, 37A30; Secondary 60B20}
\keywords{General linear group; zero-one law; stationary measure; random matrices; regularity; algebraic set}
\maketitle



\section{Introduction}\label{sec-intro}

\subsection{Motivation and objectives}
Let $d\geq 2$ be an integer. 
We denote by $\mathbb{R}^d$ the $d$-dimensional Euclidean space equipped with the standard Euclidean scalar product 
and by $(\mathbb R^d)^*$ the dual space of $\mathbb{R}^d$. 
Let  $\ell$ be the Lebesgue measure on $\mathbb R.$ 
For any $v\in \mathbb R^d $ and $ f \in (\mathbb R^d)^*$ the corresponding 
duality bracket is denoted by 
$\langle f, v \rangle = f(v)$; 
sometimes instead of $f(v)$ we shall use the reverse notation $ v(f) = \langle v, f \rangle$.
Set $\mathbb R^d_0=\mathbb R^d \setminus \{ 0 \}$. 
Denote by $\mathbb G=GL(d,\mathbb R)$ the general linear group of 
invertible $d\times d$ matrices with coefficients in $\mathbb R$.
 The projective space $\mathbb P^{d-1}$ of $\mathbb R^d$ 
is the set of elements $x=\mathbb R v$, 
where $v\in \mathbb R^d_0$.
The projective space of $(\mathbb R^d)^*$ is denoted by $(\mathbb P^{d-1})^*$. 
For any $x=\mathbb Rv \in \mathbb P^{d-1}$ and $y=\mathbb Rf \in (\mathbb P^{d-1})^*$ we define
$\delta(y,x)=  \frac{| \langle f,v \rangle |}{|f| |v|}$.
For any $g\in \mathbb G$ and $x = \mathbb R v \in\mathbb P^{d-1}$ with $v\in\mathbb R^d_0$
let $gx 
= \mathbb R  gv \in \mathbb P^{d-1}$, 
while $gv\in \mathbb R^d$ is the image of the automorphism 
$v\mapsto gv$ on $\mathbb R^d$. 
Set $\mathbb N = \{ 0,1,2,\ldots  \} $ and  $\mathbb N^*=\mathbb N \setminus \{ 0\}$.

Let $\mu$ be a probability measure on $\mathbb G$.  
Consider the probability space $(\Omega,\mathscr F, \mathbb P),$ where $\Omega= \mathbb G^{\mathbb N^*}$,
$\mathscr F$ is the Borel $\sigma$-algebra on $\Omega$ 
and $\mathbb P= \mu^{\otimes \mathbb N^*}$.
If we denote by $g_i$ the coordinate mapping on $\Omega$, 
then $g_1,g_2,\ldots$ is a sequence of i.i.d.\ random elements in $\mathbb G$ 
defined on $(\Omega,\mathscr F, \mathbb P)$ with the same law $\mu$.
Set $G_n=g_n \ldots g_1$, for $n\geq 1$.

A measure $\nu$ is called $\mu$-invariant if $\mu*\nu=\nu$, where $*$ stands for the convolution of probability measures. 
Furstenberg \cite{Furst BTSPHS-73} 
showed that under some mild conditions there exists a $\mu$-invariant probability measure $\nu$
on $\mathbb P^{d-1}$ which is not supported by any proper projective hyperplane: 
for any projective hyperplane $Y\subsetneq \mathbb P^{d-1}$, 
\begin{align} \label{eqfurstenb001}
\nu(Y) =0.
\end{align}
The Furstenberg zero-law \eqref{eqfurstenb001} turns out to be one of the key properties 
in the study of products of random matrices. 
It is used  
in the proof of many limit theorems 
for the norm cocycle  $\log |G_n v|$,
where $v\in \mathbb R^d_0$ is a starting vector.
We refer to Furstenberg and Kesten \cite{FK60}, Le Page \cite{LeP82}, Bougerol and Lacroix \cite{BL85}, 
Guivarc'h \cite{Gui15}, Benoist and Quint \cite{BQ14-RWPS, BQ16b},
who established the law of large numbers,  the central limit theorem, the local limit theorem 
and large deviation asymptotics. 
Some of these results have been extended to the coefficients 
$\langle f,G_n v \rangle$, where $f \in (\mathbb R^d)^*$, however much less is known in this respect.
Guivarc'h and Raugi \cite{GR85} have proved the law of large numbers and the central limit theorem for the coefficients. 
In the setting of reductive groups, Benoist and Quint \cite{BQ16b} 
have established the law of iterated logarithm and the  large deviation bounds.
The approach developed in \cite{GR85}, \cite{BL85} and \cite{Gui90} for the proof of 
the law of large numbers and of the central limit theorem for $\log |\langle f,G_n v \rangle |$ 
is based on the use of the quantitative version of the property \eqref{eqfurstenb001}
called H\"older regularity of the stationary measure $\nu$ 
which we state below: there exist two positive constants $\alpha$ and $C$ such that, 
for any $y = \mathbb R f \in (\mathbb P^{d-1})^*$ and $\varepsilon > 0$,
\begin{align} \label{eq-reg001}
\nu \left( \left\{x \in \mathbb P^{d-1}: \delta(y,x) \leq \varepsilon \right\} \right) \leq C\varepsilon^{\alpha}.
\end{align}
The next elementary identity 
relates the coefficient 
$ \langle f,G_n v \rangle $ 
to the norm $|G_n v|$:  
for any $x=\mathbb Rv \in \mathbb P^{d-1}$ and $y=\mathbb Rf \in (\mathbb P^{d-1})^*$ with 
$|f|=1$,
\begin{align} \label{eq-represent001}
\log |\langle f,G_n v \rangle | = \log |G_nv| + \log\delta(y, G_n x),
\end{align}
where $\delta(y, G_n x)  =  \frac{| \langle f, G_n v   \rangle |}{|f| |G_n v|}$.
From \eqref{eq-reg001} one can deduce that for any $\beta > 0$,
\begin{align} \label{eq-almost sure 001}
\lim_{n \to \infty} n^{-\beta} \log\delta(y, G_n x) = 0, \quad \mathbb P\mbox{-a.s..}
\end{align}
Now using \eqref{eq-represent001} and \eqref{eq-almost sure 001} we can infer the limit behaviour of  
$\log |\langle f,G_n v \rangle |$ from that of $\log |G_n v|$. 
This allows for instance to prove the law of large numbers  and the central limit theorem: 
for any $x=\mathbb Rv \in \mathbb P^{d-1}$ and $y=\mathbb Rf \in (\mathbb P^{d-1})^*$,
\begin{align*} 
\lim_{n \to \infty} \frac{1}{n}\log |\langle f,G_n v \rangle | = \lambda, \quad \mathbb P\mbox{-a.s.}
\end{align*}
and
\begin{align*} 
\lim_{n\to \infty} \mathbb P \left( \frac{\log |\langle f,G_n v \rangle | - n\lambda}{\sigma\sqrt{n}} \leq t \right) = \Phi(t),
\end{align*}
where $\lambda \in \bb R$ is a constant called the Lyapunov exponent, 
$\sigma^2=\lim_{n\to \infty}\frac{1}{n}\mathbb E (\log |G_n v|-n\lambda)^2>0$ is a positive number 
and $\Phi$ is the standard normal distribution function.
For the law of iterated logarithm and large deviation bounds, 
Benoist and Quint \cite[Lemma 14.11]{BQ16b}, 
following the approach of Bourgain, Furman, Lindenstrauss and Mozes \cite{BFLM11}, 
have developed another strategy 
based on the following inequality:
for any $a >0$ there exist positive constants $c$ and $n_0$ such that for any $n_0\leq l \leq n$, 
$y \in (\mathbb P^{d-1})^*$ and $x \in \mathbb P^{d-1}$, 
\begin{align}\label{Regularity_Ineq 001}
\mathbb P \left( \delta( y, G_n x) \leq  e^{-a l}  \right) \leq C e^{- cl}.
\end{align}
The bound \eqref{Regularity_Ineq 001} implies \eqref{eq-reg001}, 
and therefore contains more information than  \eqref{eq-reg001}.
It gives an alternative way to prove the law of large numbers and the central limit theorem, 
but also allows
to establish new results like the law of iterated logarithm and the large deviation bounds.    
However, many important properties such as the Berry-Esseen bounds, local limit theorems, 
large deviation principles and exact asymptotics of large deviations for the coefficients $\langle f,G_n v \rangle$ 
cannot be obtained by this approach.  
For these latter statements the exact contribution of the term $\delta(y, G_n x)$ should be accounted,
which means that we need to establish more general theorems for the couple
$G_n x$ and  $\log |G_n v |$.

We find out that in order to transfer many asymptotic properties 
from the couple $(G_n x, \log |G_n v |)$ to that of the 
coefficients $\log |\langle f,G_n v \rangle |$ 
it is necessary to establish the identity \eqref{eqfurstenb001} for subsets 
of $\mathbb P^{d-1}$ which are not projective hyperplanes, 
in particular, for the hypersurfaces $\{x\in \mathbb P^{d-1}: \delta(y,x)=t\},$ where $t\not = 0$.
The main goal of the paper is to extend the result of Furstenberg \eqref{eqfurstenb001}
from the special case of  projective hyperplanes 
to arbitrary algebraic subsets $Y$ of the projective space $\mathbb P^{d-1}$. 
There is, however, an essential difference with the Furstenberg's result, 
 which confers the mass $0$ to a projective hyperplane. 
We show that for an arbitrary algebraic set $Y$ of $\mathbb P^{d-1}$ 
it holds that $\nu(Y)$ is $0$ or $1$.  
Contrary to the Furstenberg zero-law, 
it is possible, as we show in Example \ref{Example1}, that the invariant measure $\nu$ is concentrated on 
an algebraic subset of dimension $d-2$ on the projective space $\mathbb P^{d-1}$.  
It is also interesting to note that for projective hyperplanes the Furstenberg zero-law can be strengthened to
 the regularity property \eqref{eq-reg001}, 
while for algebraic sets the quantitative analog of \eqref{eq-reg001} has not yet been established. 
%
 
Using the zero-one law of the stationary measure $\nu$ we will prove the following local limit theorem:
under appropriate conditions, for any real numbers $a_1 < a_2$, $f\in (\mathbb R^d)^*$ and $v\in \mathbb R^d$, 
as $n \to \infty$,
\begin{align*}
\mathbb{P} \Big(  \log |\langle f, G_nv \rangle|  - n \lambda \in [a_1, a_2]  \Big) 
= \frac{a_2 - a_1}{ \sigma \sqrt{2 \pi n} } (1+o(1)). 
\end{align*}

Based on a zero-one law for the invariant measure 
under the change of measure 
which is proved in Theorem \ref{Th-main-subsets001}, 
it is possible to establish a local limit theorem with large deviations for the coefficients of 
$G_n$, however this will be done elsewhere.

\subsection{Idea of the proof of the local limit thorem} 
For illustration we show how to apply our zero-one  law to establish the local limit theorem for the coefficients 
$\langle f, G_n v \rangle$, where $f\in (\mathbb R^d)^*$ and $v\in \mathbb R^d$.  
Letting $I=[a_1,a_2]$ be an interval of the real line,  
we have to handle the probability
\begin{align} \label{eq Largedevprob001}
\mathbb P(\log |\langle f, G_n v \rangle| - n \lambda \in I). 
 \end{align}
Using \eqref{eq-represent001} and discretizing the values of $\delta(y, G_n x)$, the probability 
\eqref{eq Largedevprob001} is bounded from above by
\begin{align} \label{eq Largedevprob002}
\sum_{k}\mathbb P \Big( \log |G_n v | - n \lambda  \in I_{\eta} + \eta k ,  G_n x \in Y_k^{\eta}  \Big), 
\end{align}
where $\eta>0$ is sufficiently small and will be chosen later, $I_{\eta} = [a_1-\eta, a_2+\eta]$
and $Y_k^{\eta} = \{ x \in \mathbb P^{d-1} : - \log \delta(y,x) \in \eta [k-1, k) \}$.
By the local limit theorem for the couple $G_n x$ and  $\log |G_n v |$
(actually in the paper we circumvent it by using the spectral 
gap theory 
and some smoothing technique) 
each probability in the sum \eqref{eq Largedevprob002}  is asymptotically bounded by 
$\frac{\ell(I_{\eta})}{ \sigma \sqrt{2\pi n} } \nu( \overline Y_k^{\eta} )$,
with $\overline Y_k^{\eta} = \{ x \in \mathbb P^{d-1} : - \log \delta(y,x) \in \eta [k-1-\varepsilon, k+\varepsilon) \}$. 
Hence, the sum \eqref{eq Largedevprob002} asymptotically does not exceed 
\begin{align} \label{eq Largedevprob003}
\frac{\ell (I_{\eta})}{ \sigma \sqrt{2\pi n} } \sum_{k} \nu(\overline Y_k^{\eta}). 
\end{align}
An important issue is to show that the sum in \eqref{eq Largedevprob003}  
converges to $1$ as  $\varepsilon\to 0$ and $\eta\to 0$. 
This turns out to be a difficult problem. 
By some easy calculations it reduces to showing that 
for any $k\geq 0$, 
\begin{align} \label{ergular-k001}
\nu \left( \left\{ x \in \mathbb P^{d-1} : \log\delta(y,x) = -\eta k  \right\} \right) =0.
\end{align}
The set $Y_0=\{ x \in \mathbb P^{d-1} : \delta(y,x) = 0  \}$  
is a projective hyperplane in $\mathbb P^{d-1}$ of dimension $d-2$.  
By \eqref{eqfurstenb001},  $\nu(Y_0)=0$,
under the strong irreducibility condition on the measure $\mu$.
For $k \geq 0$, the equality \eqref{ergular-k001} 
may not be true for an arbitrary $\eta$, as we show in the paper. 
In fact we establish a zero-one law for the invariant measure $\nu$ (see Theorem \ref{Lemma-Fursten-set001}), 
from which it follows that
for any $t<0$,
\begin{align*}
\nu \left( \left\{ x \in \mathbb P^{d-1} : \log\delta(y,x) = t  \right\} \right) = 0 \ \mbox{or} \ 1.  
\end{align*}
This statement implies that \eqref{ergular-k001} holds true for any $k \geq 1$ 
if we choose an appropriate constant $\eta$. 
Indeed, if there exists $t<0$ such that
$\nu(\{ x \in \mathbb P^{d-1} : \log\delta(y,x) = t  \} )=1$, then we can choose
$\eta$ such that $-\eta k \not = t$ for any $k$, so that \eqref{ergular-k001} holds true for any $k$.
Otherwise $\eta$ can be chosen arbitrarily. 
This proves that the sum in \eqref{eq Largedevprob003} converges to $1$
as $\varepsilon\to 0$ and $\eta\to 0$ and so we obtain that $\limsup_{n\to\infty}$ 
of the probability \eqref{eq Largedevprob001} 
is bounded from above by $\frac{\ell (I_{\eta})}{\sigma \sqrt{2\pi n}} $.
By some similar reasoning $\liminf_{n\to\infty}$ of \eqref{eq Largedevprob001} is bounded from below by the same quantity. 


\section{Main results}\label{sec-main res}
 
The inverse of $g\in \mathbb G$ is denoted by $g^{-1}$ and the identity element of $\mathbb G$ is the unit matrix $\bf e$.
The adjoint operator $g^*$ of $g\in \mathbb G$
is the automorphism $g^*$ of  $(\mathbb R^d)^*$
defined by  $(g^*f) (v) = f(g v)$, 
where $v\in \mathbb R^d$ and $f\in (\mathbb R^d)^*.$
%
%
Let $\mathcal C(\mathbb P^{d-1})$ be the space of complex valued continuous functions on $\mathbb P^{d-1}$. 

All over the paper 
$\mu$ is a probability law on $\mathbb G$.
 Denote by $\supp \mu$ the support of $\mu$ and by  $\Gamma_{\mu}:=[\supp\mu]$ 
the smallest closed subsemigroup of $\mathbb G$ generated by $\supp \mu$.  

A matrix $g\in \mathbb G$ is said to be \emph{proximal} if it has an algebraic simple dominant eigenvalue, 
namely, $g$ has an eigenvalue $\lambda_{g}$ satisfying $|\lambda_{g}| > |\lambda_{g}'|$
for all other eigenvalues $\lambda_{g}'$ of $g$.
It is easy to check that $\lambda_{g} \in \mathbb{R}$. 
We choose $v_g^+$ an eigenvector vector with unit norm $|v_g| = 1$, corresponding to the eigenvalue 
$\lambda_{g}$, which will be called dominant eigenvector of $g$. 
The unique element $x_g^+=\mathbb R v_g^+ \in \mathbb P^{d-1}$ 
will be called 
\emph{attractor} of $g$.
Note that it does not depend on the choice of $v_g^+$.

We need the following strong irreducibility and proximality conditions: 

 \begin{conditionA}[Strong irreducibility] \label{Cond strong-irred}
No finite union of proper subspaces of $\mathbb{R}^d$ is $\Gamma_{\mu}$-invariant.
\end{conditionA}

 \begin{conditionA}[Proximality]  \label{Cond proxim}
$\Gamma_{\mu}$ contains at least one proximal matrix (i.e.  a matrix with a simple dominant eigenvalue).
\end{conditionA}

%

By definition the strong irreducibility here concerns 
the action of the matrices on the Euclidean space $\mathbb R^d$.
More precisely, it means that there is no finite union
of subspaces $V_1, \ldots, V_k$ in $\mathbb R^d$ such that
 $g(V_1\cup \ldots \cup V_k) \subset V_1\cup \ldots \cup V_k$ for any $g\in \Gamma_\mu$. 

Assume conditions  \ref{Cond strong-irred} and \ref{Cond proxim}. 
By a well known result of Furstenberg \cite{Furst BTSPHS-73}, 
on the projective space $\mathbb P^{d-1}$ there exists a unique $\mu$-stationnary probability measure $\nu$
such that for any $\varphi \in \mathcal C(\mathbb P^{d-1})$,
\begin{align} \label{mu station meas}
 \int_{\mathbb P^{d-1}} \int_{\mathbb G} \varphi(g x) \mu(dg) \nu(dx)
 = \int_{\mathbb P^{d-1}} \varphi(x) \nu(dx). 
\end{align}
Moreover, Furstenberg \cite{Furst BTSPHS-73} 
(see also Bougerol and Lacroix \cite[Chapter III, Proposition 2.3]{BL85})
showed that under appropriate assumptions 
any proper projective subspace $Y \subset \mathbb P^{d-1}$ has $\nu$-measure $0$.

\begin{theorem}[Furstenberg] \label{Lemma-Fursten-set}
Assume condition \ref{Cond strong-irred}. 
Then, for any $\mu$-stationary measure $\nu$ and any proper projective subspace $Y \subset \mathbb P^{d-1}$
it holds that $\nu( Y)=0$.
\end{theorem}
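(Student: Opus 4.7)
The strategy is to argue by contradiction: assuming that some proper projective subspace has positive $\nu$-measure, I extract a finite, nonempty, $\Gamma_\mu$-invariant family of proper linear subspaces of $\mathbb{R}^d$, violating the strong irreducibility condition~\ref{Cond strong-irred}.

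Specifically, let $k$ be the smallest integer such that some projective subspace $Y \subset \mathbb{P}^{d-1}$ of dimension $k$ has $\nu(Y) > 0$ (so $k \leq d-2$), and set
\[
p := \sup\{\nu(Y) : Y \subset \mathbb{P}^{d-1} \text{ projective subspace of dimension } k\} > 0.
\]
The first step is to show this supremum is attained. The Grassmannian of $(k{+}1)$-planes in $\mathbb{R}^d$ is compact, and $Y \mapsto \nu(Y)$ is upper semi-continuous: if $Y_n \to Y$ in the Grassmannian then $\limsup_n \mathbf{1}_{Y_n} \leq \mathbf{1}_Y$ pointwise, so reverse Fatou gives $\limsup_n \nu(Y_n) \leq \nu(Y)$. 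Thus $\mathcal{Y} := \{Y : \dim Y = k,\ \nu(Y) = p\}$ is nonempty.

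Next I would show $\mathcal{Y}$ is finite. For distinct $Y_1, Y_2 \in \mathcal{Y}$, the intersection $Y_1 \cap Y_2$ is a projective subspace of dimension strictly less than $k$, hence has $\nu$-measure zero by the minimal choice of $k$. So setting $Z := \bigcup_{Y \in \mathcal{Y}} Y$, inclusion-exclusion yields $\nu(Z) = p \cdot |\mathcal{Y}|$, and since $\nu$ is a probability measure, $|\mathcal{Y}| \leq 1/p < \infty$.

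Now invoke stationarity: $\nu(Z) = \int_{\mathbb{G}} \nu(g^{-1} Z)\, d\mu(g)$. For every $g \in \mathbb{G}$, the image $g^{-1}Z$ is a union of $|\mathcal{Y}|$ projective subspaces of dimension $k$, so $\nu(g^{-1}Z) \leq p \cdot |\mathcal{Y}| = \nu(Z)$. Equality of the average forces $\nu(g^{-1}Y) = p$ for every $Y \in \mathcal{Y}$ and $\mu$-a.e.\ $g$, and the continuity of the $\mathbb{G}$-action on the Grassmannian extends this to every $g \in \supp\mu$. Thus $g^{-1}$ maps $\mathcal{Y}$ into itself; being an injection on the finite set $\mathcal{Y}$, it permutes $\mathcal{Y}$, so $g^{-1} Z = Z$ and consequently $gZ = Z$ for all $g \in \supp \mu$, whence $g Z = Z$ for every $g \in \Gamma_\mu$. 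Lifting $Z$ to the corresponding union of proper linear subspaces $V_1 \cup \cdots \cup V_{|\mathcal{Y}|}$ of $\mathbb{R}^d$ produces a $\Gamma_\mu$-invariant finite union of proper subspaces, contradicting~\ref{Cond strong-irred}.

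The step I expect to be the most delicate is the upper semi-continuity / attainment of the supremum, since it relies on checking that the Grassmannian limit of the $Y_n$'s ``catches'' enough $\nu$-mass; once $\mathcal{Y}$ is exhibited as a finite nonempty set, the remainder is a bookkeeping argument extracting rigid combinatorial invariance from the averaged stationarity equation.
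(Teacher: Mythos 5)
Your proof is correct. Note that the paper states this theorem without proof (it is quoted from Furstenberg, with Bougerol--Lacroix as a reference); what you wrote is the classical argument, and it has the same skeleton as the proof the paper gives in Section 5 for its generalization, Theorem \ref{Lemma-Fursten-set001} via Proposition \ref{lemma-Y_0}: take the minimal dimension of a positive-measure subspace, observe that the family of maximizers of the measure in that dimension is finite because pairwise intersections drop dimension and hence are null, and use stationarity together with the maximum principle (Lemma \ref{lemma-Princ-max}) to make that finite family $\Gamma_\mu$-invariant. The genuine differences are at the two ends. For attainment of the supremum you use compactness of the Grassmannian plus upper semicontinuity of $W \mapsto \nu(W)$ (reverse Fatou, resting on the correct observation that $\{W : x \in W\}$ is closed in the Grassmannian), whereas the paper gets attainment purely from the finiteness of the set of subspaces with measure at least $c$ (Lemma \ref{lemma-beta attained001}); both are fine. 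For the conclusion you can contradict \ref{Cond strong-irred} directly, since a finite $\Gamma_\mu$-invariant family of projective subspaces lifts to a finite invariant union of proper nonzero linear subspaces of $\mathbb R^d$; the algebraic-set version has no such shortcut, which is why the paper must route through the Zariski closure $H_\mu$, the closed orbit $X_\mu$ and its irreducibility, and only obtains a zero-one law there.

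One small wording point: ``continuity of the $\mathbb G$-action'' by itself does not upgrade $\nu(g^{-1}Y)=p$ from $\mu$-a.e.\ $g$ to all $g \in \supp\mu$, because $g \mapsto \nu(g^{-1}Y)$ is only upper semicontinuous. But upper semicontinuity, which you already established, is exactly what is needed: approximate $g_0 \in \supp\mu$ by $g_n$ with $\nu(g_n^{-1}Y)=p$ and use $p = \limsup_n \nu(g_n^{-1}Y) \leq \nu(g_0^{-1}Y) \leq p$; alternatively, argue as the paper does that $\{ g : g^{-1}\mathcal Y = \mathcal Y \}$ is closed because $\mathcal Y$ is finite and $g \mapsto g^{-1}Y$ is continuous into the Grassmannian.
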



Our first result extends Theorem \ref{Lemma-Fursten-set} to algebraic subsets of $\mathbb P^{d-1}$.
We recall that a subset $X$ in $\mathbb R^d$ is algebraic if there exist polynomial functions
$p_1,\ldots,p_k$ on $\mathbb R^d$ such that $X= \{v\in \mathbb R^d :  p_1(v)=\ldots =p_k(v)=0 \}$.  
We say that $X$ is homogeneous if for every $t\in \mathbb R $ and $v\in X$ it holds that $tv \in X$. 
A subset $Y$ of $\mathbb P^{d-1}$ is algebraic if there exists an algebraic homogeneous subset
$X$ in $\mathbb R^d$ whose projective image on $\mathbb P^{d-1}$ is $Y$.


\begin{theorem} \label{Lemma-Fursten-set001} 
Assume conditions \ref{Cond strong-irred} and \ref{Cond proxim}. 
Then, for any algebraic subset $Y$ of 
$\mathbb P^{d-1}$ 
it holds that either $\nu( Y)=0$ or $\nu( Y)=1$. 
\end{theorem}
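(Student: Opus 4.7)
My plan combines a Noetherian descent on the dimension of algebraic subsets with the density of attracting fixed points of proximal elements of $\Gamma_\mu$ in $\supp\nu$. I first reduce to the irreducible case: writing $Y=Y_1\cup\cdots\cup Y_k$ as a union of irreducible components, if $\nu(Y)>0$ then $\nu(Y_i)>0$ for some $i$, so it suffices to prove $\nu(Y)=1$ whenever $Y$ is irreducible with $\nu(Y)>0$. Let $d_0$ be the smallest dimension of an irreducible algebraic subset of $\mathbb P^{d-1}$ carrying positive $\nu$-mass. For any two distinct irreducible subsets $Y_1,Y_2$ of dimension $d_0$, the intersection $Y_1\cap Y_2$ is a proper algebraic subset of the irreducible $Y_1$, hence has dimension $<d_0$; by the minimality of $d_0$, $\nu(Y_1\cap Y_2)=0$. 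Consequently the family $\mathcal C_{d_0}$ of irreducible algebraic subsets of dimension $d_0$ with $\nu>0$ is at most countable, and the maximum $\alpha:=\max\{\nu(Y'):Y'\in\mathcal C_{d_0}\}$ is attained on a finite subset $\mathcal M\subset\mathcal C_{d_0}$.

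I then show that $\Gamma_\mu$ permutes $\mathcal M$. Fix $Y_0\in\mathcal M$. The stationarity $\mu*\nu=\nu$ gives $\alpha=\nu(Y_0)=\int_{\mathbb G}\nu(g^{-1}Y_0)\,\mu(dg)$, and since each $g^{-1}Y_0$ lies in $\mathcal C_{d_0}$ and thus satisfies $\nu(g^{-1}Y_0)\leq\alpha$, equality forces $g^{-1}Y_0\in\mathcal M$ for $\mu$-almost every $g$. The map $g\mapsto g^{-1}Y_0$ is continuous into the Hausdorff topology on closed subsets of $\mathbb P^{d-1}$, and the finite set $\mathcal M$ is closed in this topology; hence $\{g\in\mathbb G:g^{-1}Y'\in\mathcal M\text{ for every }Y'\in\mathcal M\}$ is a closed subsemigroup of $\mathbb G$ of full $\mu$-measure. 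It therefore contains $\supp\mu$ and hence $\Gamma_\mu=[\supp\mu]$. Since each $g\in\mathbb G$ is a bijection of $\mathbb P^{d-1}$, every $g\in\Gamma_\mu$ acts as a bijection on the finite set $\mathcal M$.

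The decisive input is proximality. Pick a proximal $h\in\Gamma_\mu$, existing by \ref{Cond proxim}, and let $k$ be the order of the permutation it induces on $\mathcal M$, so that $h^kY'=Y'$ for every $Y'\in\mathcal M$. By Theorem \ref{Lemma-Fursten-set} the repelling hyperplane of $h$ has $\nu$-measure $0$, while $\nu(Y')=\alpha>0$; choosing $x\in Y'$ outside this hyperplane, the iterates $h^{kn}x\in Y'$ converge to $x_h^+$, and closedness of $Y'$ gives $x_h^+\in Y'$. Hence every $Y'\in\mathcal M$ contains every attractor $x_h^+$ of a proximal element of $\Gamma_\mu$. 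Under \ref{Cond strong-irred} and \ref{Cond proxim} the set of such attractors is dense in $\supp\nu$ (a classical result, see e.g.\ \cite{BL85}), so $Y'\supset\supp\nu$, giving $\nu(Y')=1$ and thus $\alpha=1$. If $|\mathcal M|\geq 2$, the same argument applied to distinct $Y_1,Y_2\in\mathcal M$ would give $Y_1\cap Y_2\supset\supp\nu$ and hence $\nu(Y_1\cap Y_2)=1$, contradicting $\nu(Y_1\cap Y_2)=0$; therefore $\mathcal M=\{Y_0\}$ with $\nu(Y_0)=1$.

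To conclude for an arbitrary irreducible $Y$ with $\nu(Y)>0$: if $Y_0\not\subset Y$, then $Y\cap Y_0\subsetneq Y_0$ is a proper algebraic subset of the irreducible $Y_0$, so has dimension $<d_0$ and $\nu$-measure $0$; combined with $\nu(Y_0)=1$ this forces $\nu(Y)=\nu(Y\cap Y_0)=0$, contradicting $\nu(Y)>0$. Hence $Y_0\subset Y$ and $\nu(Y)\geq\nu(Y_0)=1$. I expect the main obstacle to lie in the invariance step for $\mathcal M$: one must pick a topology on the moduli of algebraic subsets under which the $\mathbb G$-action is continuous and $\mathcal M$ remains closed, and then lift $\mu$-a.e.\ invariance to all of $\Gamma_\mu$ via a closed-subsemigroup argument that respects the fact that $\Gamma_\mu$ is only a semigroup, not a group.
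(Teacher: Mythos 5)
Your argument is correct, and its first half coincides with the paper's: you introduce the minimal dimension $d_0$ of a positive-measure irreducible algebraic set, prove finiteness of the maximizing family via the pairwise-null intersections, and upgrade stationarity through the maximum principle and a closed-subsemigroup argument to get $\Gamma_\mu$-invariance of that family (this is exactly the content of Lemmas \ref{lemma-algsubs0}, \ref{lemma-ccc100aa}, \ref{lemma-beta attained001} and \ref{lemma-ccc104aa}). Where you genuinely diverge is in the decisive step. The paper passes to the Zariski closure $H_\mu$ of $\Gamma_\mu$ and to $X_\mu=\mathrm{Zc}(\supp\nu)$, invokes Borel's closed-orbit theorem to show $X_\mu$ is the unique closed $H_\mu$-orbit (Lemma \ref{Lemma-subvarH001}), proves irreducibility of $X_\mu$ via the Zariski-connected component $H_\mu^0$, the induced measure $\mu_0$ and Furstenberg's uniqueness (Lemma \ref{Lemma-subvarH002}), and concludes $\dim X_\mu=d_0$. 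You instead exploit proximality dynamically: a suitable power of any proximal $h\in\Gamma_\mu$ fixes each maximizer $Y'$, a starting point of $Y'$ off the repelling hyperplane exists by Theorem \ref{Lemma-Fursten-set}, so the attractor $x_h^+$ lies in the (Euclidean-closed) set $Y'$, and the classical identification $\supp\nu=\Lambda(\Gamma_\mu)$ (the paper's Lemma \ref{lemma-supp of mes nu} at $s=0$) forces $\supp\nu\subset Y'$, hence $\nu(Y')=1$ and uniqueness of the maximizer; the descent to arbitrary $Y$ is then the same. Your route is more elementary on the algebraic-geometry side (no algebraic groups, closed orbits, finite-index subgroups or induced measures), at the price of using proximality and the limit-set description of $\supp\nu$ directly; the paper's heavier machinery buys structural lemmas (\ref{Lemma-subvarH001}, \ref{Lemma-subvarH002}) that are reused verbatim for the eigenmeasures $\nu_s$ in Theorems \ref{Th-main-subsets001} and \ref{proposition-exist001}, where your attractor argument would need reworking because $\nu_s$ is only harmonic in the twisted sense of Lemma \ref{lemmaSG-001}. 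Two cosmetic points: decide whether you work with real or complexified algebraic sets (either is fine for your argument, since you only need Noetherian decomposition, the dimension drop for proper closed subsets of irreducible sets, and Euclidean closedness), and note that $g^{-1}Y_0$ need not lie in $\mathcal C_{d_0}$ when $\nu(g^{-1}Y_0)=0$ — what you actually use is only the bound $\nu(g^{-1}Y_0)\leq\alpha$, which holds regardless.
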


The statement of Theorem \ref{Lemma-Fursten-set001}  can be interpreted as a statement about the behaviour of the 
trajectory $G_n x $: either for some $x\in \mathbb P^{d-1}$ the trajectory $G_n x$ stays in $Y$ with probability $1$, 
or for every $x\in \mathbb P^{d-1}$ this trajectory mostly avoids $Y$.   
Another point of view of the way the random walk $G_n x$ avoids an algebraic subset 
is developed in the paper by Aoun \cite{Aou13}.

The following example shows that there exist proper algebraic subsets of $\mathbb P^{d-1}$
of $\nu$-invariant measure   $0$ or $1$. 
 \begin{example} \label{Example1}
Let $d\geq 3$.  
Fix an integer $p$ such that $1\leq p \leq d-1$ and equip $\mathbb R^d$
with the quadratic form 
$q(v)= v_1^2 + v_2^2 +\ldots + v_p^2 - v_{p+1}^2 - \ldots - v_d^2.$
Let $O(q)$ be the group of isometries of $q$,
that is the group of elements $g\in GL(d,\mathbb R)$ such that 
$q(gv)= q(v)$ for all $v\in \mathbb R^d$.
We choose $\mu$ to be any probability measure 
on $O(q)$ such that $\Gamma_{\mu}$ is proximal and strongly irreducible. 
For instance we can take any probability $\mu$ with the full support $O(q)$;
then it will be proximal and strongly irreducible,  
since, for $d\geq 3$, the group $O(q)$ is proximal and strongly irreducible on $\mathbb R^d$.
Denote by $\nu$ the unique $\mu$-stationary probability measure on $\mathbb P^{d-1}$. 
Let  $Y$ be the subset of $\mathbb P^{d-1}$ defined as  the set of straight lines in $\mathbb R^d$
which are spanned by vectors $v\in \mathbb R^d$ with $q(v)=0$.
Then the $\mu$-invariant measure $\nu$ is such that $\nu(Y)=1$.
In particular this implies that the support of the measure $\nu$ is contained in $Y$.

Consider the case when $p=1$.
Let $f$ be the linear functional $f: u \in \mathbb R^d \mapsto u_1\in \mathbb R$ 
so that $| f |=1$
and let $y=\mathbb R f \in (\mathbb P^{d-1})^*$. 
Define the algebraic subset $Y=\{ x\in \mathbb P^{d-1} : \delta (x,y) = 1/\sqrt{2} \}$.
We will show that $Y$ contains the support of the measure $\nu$.
Indeed, if $v$ is such that $q(v)=0$, then 
$v_1^2 = v_2^2 + \ldots +v_d^2$ and hence, with
$x =\mathbb R v \in \supp \nu$,
 \begin{align*} 
\delta(x,y) =\frac{|\langle f,v \rangle |}{|f|   |v|} = \frac{1}{\sqrt{2}}.
\end{align*}
Therefore $\nu(Y)=1$. 
Moreover, if we define 
$Y' =\{ x\in \mathbb P^{d-1} : \delta (x,y) = t \}$ with $t \not = 1/\sqrt{2}$, then $\nu(Y')=0$.
\end{example} 


\begin{corollary}
Assume conditions \ref{Cond strong-irred} and \ref{Cond proxim}. 
For any $t \in (-\infty, 0)$, define the hypersurface 
$Y_0 = \{ x \in \mathbb P^{d-1}: \delta (x,y) =t \}$.
Then $\nu(Y_0)$ is $0$ or $1$.
\end{corollary}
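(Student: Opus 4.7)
The plan is to reduce the corollary directly to Theorem \ref{Lemma-Fursten-set001} by checking that the hypersurface $Y_0$ is an algebraic subset of $\mathbb P^{d-1}$ in the sense made precise just before that theorem. Once algebraicity is verified, the zero-one dichotomy is automatic.

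To verify algebraicity, I would fix representatives $f \in (\mathbb R^d)^*$ of $y$ and $v \in \mathbb R^d_0$ of $x$, and interpret the defining equation in the natural way consistent with \eqref{ergular-k001}, namely $\log \delta(x,y) = t$ (equivalently $\delta(x,y) = e^t \in (0,1)$), so that $Y_0$ is cut out by the single homogeneous polynomial equation
\[
p(v) := \langle f, v \rangle^2 - e^{2t} |f|^2 |v|^2 = 0.
\]
Since $p$ is a polynomial on $\mathbb R^d$ of degree $2$ satisfying $p(\lambda v) = \lambda^2 p(v)$, the set $X_0 = \{v \in \mathbb R^d : p(v) = 0\}$ is a homogeneous algebraic subset of $\mathbb R^d$ whose projective image on $\mathbb P^{d-1}$ is exactly $Y_0$. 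By the definition of an algebraic subset of $\mathbb P^{d-1}$ recorded before Theorem \ref{Lemma-Fursten-set001}, it follows that $Y_0$ is algebraic.

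Applying Theorem \ref{Lemma-Fursten-set001} under conditions \ref{Cond strong-irred} and \ref{Cond proxim} then yields $\nu(Y_0) \in \{0,1\}$, which is the desired conclusion. There is essentially no obstacle in the derivation itself: all the substantive work is encapsulated in Theorem \ref{Lemma-Fursten-set001}. The only delicate point is the interpretation of the defining equation (since $\delta \geq 0$, the literal reading $\delta(x,y) = t$ for $t < 0$ would be vacuous); one reads it as $\log \delta(x,y) = t$, matching the discussion of \eqref{ergular-k001}. Example \ref{Example1} with $p = 1$ and $e^t = 1/\sqrt{2}$ furthermore shows that the value $\nu(Y_0) = 1$ is actually attained for some hypersurfaces of this form, so the statement is genuinely a zero-\emph{one} law and cannot in general be strengthened to $\nu(Y_0) = 0$.
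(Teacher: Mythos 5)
Your proposal is correct and is essentially the paper's own proof: both arguments exhibit $Y_0$ as the projective image of the zero set of a single homogeneous quadratic polynomial (hence an algebraic subset of $\mathbb P^{d-1}$ in the sense defined before Theorem \ref{Lemma-Fursten-set001}) and then invoke that theorem to get $\nu(Y_0)\in\{0,1\}$. The only difference is in how the slightly ambiguous level condition is read: the paper takes the polynomial $\langle f,v\rangle^2 - t^2|v|^2$ (i.e.\ level $|t|$), while you take $\langle f,v\rangle^2 - e^{2t}|f|^2|v|^2$ (i.e.\ $\log\delta(x,y)=t$); this interpretive choice does not affect the substance of the argument.
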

\begin{proof}
Let $f\in (\mathbb R^d)^*$ be such that $|f|=1$ and $y = \mathbb R f\in (\mathbb P^{d-1})^*$.
Define the homogenous set $X_0$ as the collection of all vectors $v\in \mathbb R^d$ satisfying $\langle f,v \rangle^2 = t^2|v|^2$.
Then $Y_0$ is the projective image of $X_0\setminus \{0\}$ on $\mathbb P^{d-1}$.
As the function $\phi: v\in \mathbb R^d \mapsto f(v)^2 - t^2 |v|^2 = (f_1v_1+\ldots + f_dv_d)^2 - t^2 (v_1^2+\ldots + v_d^2)$  is a polynomial on $\mathbb R^d$,
the set $X_0$ is algebraic. Since it is also homogeneous, by definition the set $Y_0$ is algebraic in 
$\mathbb P^{d-1}$, and the conclusion follows from Theorem \ref{Lemma-Fursten-set001}. 
%
%
%
\end{proof}

%
%
%

In order to state our second result we need to introduce the transfer operators 
and related notions.
These operators play an important role in many problems related to 
products of random matrices, see for instance 
 Le Page \cite{LeP82} or Guivarc'h and Le Page \cite{GL16}.
We are going to show that the stationary measures related to these operators 
do not charge the algebraic subsets. 
 
For any $g\in\mathbb G$, $x=\mathbb R v \in \mathbb P^{d-1}$ and $y=\mathbb R f \in (\mathbb P^{d-1})^*$  set
\begin{align} \label{cocycle-001}
\sigma(g,x) 
= \log \frac{|g v|}{| v |}, \quad 
\sigma(g^*,y)
= \log \frac{|g^* f|}{| f |}.
\end{align}
Denote $N(g) = \max\{ \| g \|, \| g^{-1} \| \}$.
Consider the sets 
$$ 
I_{\mu}^+ 
=\left\{ s \geq 0 : \int_{\mathbb G} \| g \|^{s} \mu(dg)< + \infty \right\}, \quad 
I_{\mu}^- = \left\{ s \leq 0 : \int_{\mathbb G} N(g)^{-s} \mu(dg) < + \infty \right\} $$
and note that both $I_{\mu}^+$ and $I_{\mu}^-$ contain at least the element $0$. 
Let $s \in I_\mu^+ \cup I_\mu^-$. 
Define the transfer operators $P_s$ and $P_s^*$ as follows: 
for any $\varphi \in \mathcal C(\mathbb P^{d-1})$ and $x\in \mathbb P^{d-1}$, 
\begin{align} \label{DefPs-001a}
P_s \varphi(x) 
=\int_{\mathbb G} e^{s\sigma(g,x)} \varphi(g x) \mu(dg) 
\end{align}
and 
for any $\varphi \in \mathcal C((\mathbb P^{d-1})^* )$ and $y\in (\mathbb P^{d-1})^*$,  
\begin{align} \label{DefPs-001b}
P_s^* \varphi(y) = \int_{\mathbb G} e^{s\sigma(g^*,y)} \varphi(g^* y) \mu(dg). 
\end{align}

The next assumption will be necessary to state the results for negative $s<0$.
\begin{conditionA}[Two sided exponential moment]  \label{Two sided exponential moment} 
There exists 
$\alpha \in(0,1)$ such that
$$
\int_{\mathbb G} N(g)^{\alpha} \mu(dg) < + \infty.
$$
\end{conditionA}

Let $s\in I_{\mu}^+$.
Under conditions \ref{Cond strong-irred} and \ref{Cond proxim}, 
the transfer operator $P_s$ has a unique probability eigenmeasure $\nu_s$ on $\mathbb P^{d-1}$
corresponding to the eigenvalue $\kappa(s)$: 
\begin{align} \label{eq-positives-eigenmesure001}
P_s \nu_s = \kappa(s)\nu_s.
\end{align}
Similarly, the conjugate transfer operator $P_{s}^{*}$ 
has a unique probability eigenmeasure $\nu^*_s$ on $(\mathbb P^{d-1})^*$
corresponding to the same eigenvalue $\kappa(s)$: 
$$
\nu^*_s P_{s}^{*}  = \kappa(s)\nu^*_s.
$$
For detailed account of the mentioned properties for $s>0$ we refer the reader to \cite{GL16},
where it is proved that the mapping $s\mapsto \kappa(s)$ is analytic on a complex neighborhood of the interval $ I_\mu^+$.
Guivarc'h and Le Page \cite{GL16} have also proved that the measure $\nu_s$ does not charge
any proper projective subspace $Y$ in $\mathbb P^{d-1}$.
 
\begin{theorem} [Guivarc'h and Le Page]  \label{Th-main-hyperplane001}
Assume conditions \ref{Cond strong-irred} and \ref{Cond proxim}. 
Then, for any $s\in I_{\mu}^+$
and any proper projective subspace $Y$ of $\mathbb P^{d-1}$ 
it holds that  
$\nu_s(Y) =0$. 
\end{theorem}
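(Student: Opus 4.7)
The plan is to deduce the vanishing $\nu_s(Y) = 0$ from a stronger quantitative regularity estimate for $\nu_s$ near projective hyperplanes. Specifically, I would first prove that there exist $\alpha, C > 0$ (depending only on $\mu$ and $s$) such that
\[
\nu_s\!\left( \{ x \in \mathbb P^{d-1} : \delta(y, x) \leq \varepsilon \} \right) \leq C\, \varepsilon^\alpha
\]
for every $y \in (\mathbb P^{d-1})^\ast$ and every $\varepsilon > 0$; call this the \emph{H\"older regularity estimate}. Granted it, every proper projective subspace $Y \subsetneq \mathbb P^{d-1}$ is contained in a projective hyperplane $H_y := \{x \in \mathbb P^{d-1} : \delta(y, x) = 0\}$ (choose any $y$ in the annihilator of $Y$), whence
\[
\nu_s(Y) \leq \nu_s(H_y) = \lim_{\varepsilon \to 0^+} \nu_s\!\left( \{ x : \delta(y, x) \leq \varepsilon \} \right) \leq \lim_{\varepsilon \to 0^+} C \varepsilon^\alpha = 0,
\]
which is the desired conclusion.

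To establish the H\"older regularity estimate, I would follow the strategy of Le Page \cite{LeP82} and Guivarc'h--Le Page \cite{GL16}. Under A1 and A2 and for $s \in I_\mu^+$, the transfer operator $P_s$ acts quasi-compactly on a suitable Banach space of H\"older continuous functions, with simple leading eigenvalue $\kappa(s)$. By Markov's inequality the estimate reduces to the uniform moment bound $\sup_{y} \int \delta(y, x)^{-\alpha}\, \nu_s(dx) < \infty$ for some small $\alpha > 0$. The key identity is
\[
\delta(y, gx) \;=\; \delta(g^\ast y, x)\, e^{\sigma(g^\ast, y)\, -\, \sigma(g, x)},
\]
which transfers powers of $\delta(y, \cdot)$ between the primal and dual actions. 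Substituting $\varphi(x) := \delta(y, x)^{-\alpha}$ into the definition of $P_s$ yields
\[
\kappa(s)^{-1} P_s \varphi (x) = \int_{\mathbb G} \kappa(s)^{-1}\, e^{(s+\alpha)\sigma(g, x)}\, e^{-\alpha\, \sigma(g^\ast, y)}\, \delta(g^\ast y, x)^{-\alpha}\, \mu(dg),
\]
and an analogous formula for $P_s^n$ by iteration. The plan is then to exploit the proximality-driven convergence of $G_n^\ast y$ toward a random attractor in $(\mathbb P^{d-1})^\ast$ (furnished by A1 and A2) in order to establish a contraction of the form
\[
\kappa(s)^{-n} P_s^n \varphi(x) \leq c^n\, \delta(y, x)^{-\alpha} + C_0
\]
for some $c < 1$, some $C_0 > 0$, and all $n \geq n_0$, uniformly in $y$ and $x$. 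Integrating against $\nu_s$ and using $P_s \nu_s = \kappa(s) \nu_s$ then yields the desired uniform moment bound, and hence the H\"older regularity estimate.

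The main technical obstacle is the contraction $\kappa(s)^{-n} P_s^n \varphi \leq c^n\, \varphi + C_0$. Its proof requires a careful balance between the contraction of $G_n^\ast y$ toward its random attractor, the exponential cocycle weights $e^{(s+\alpha)\sigma(G_n, x)}$ and $e^{-\alpha\, \sigma(G_n^\ast, y)}$, and the normalizing factor $\kappa(s)^{-n}$. The exponent $\alpha$ must be chosen small enough for the perturbed moments $\int \|g\|^{s+\alpha}\, \mu(dg)$ to remain finite and for the spectral gap of $P_s$ to persist under the multiplicative perturbation by $\delta(g^\ast y, x)^{-\alpha}$; this is precisely the point at which the strict $s$-integrability encoded in $I_\mu^+$ and the joint proximality/irreducibility of $\Gamma_\mu$ enter essentially.
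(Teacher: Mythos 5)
Your reductions are fine: every proper projective subspace lies in a hyperplane $H_y=\{x:\delta(y,x)=0\}$, and a H\"older regularity bound of the type \eqref{eq-reg001} for $\nu_s$ immediately gives $\nu_s(H_y)=0$; the identity you invoke is exactly the cohomological identity \eqref{cohomological-001}, and your formula for $P_s$ applied to $\delta(y,\cdot)^{-\alpha}$ is correct. Note also that the paper does not prove Theorem \ref{Th-main-hyperplane001} itself (it quotes \cite{GL16}); what it does prove is precisely the regularity you need, but only for $|s|$ small, in Proposition \ref{Thm-Regu-s}, by a change of measure \eqref{Change of measure}, H\"older's inequality, and the Benoist--Quint avoidance bound of Lemma \ref{Lem_Regularity_nus}. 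So your overall route (moment bound $\sup_y\int\delta(y,x)^{-\alpha}\nu_s(dx)<\infty$ implies the theorem) is the right one and is the route of the cited source.

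The genuine gap is that the central estimate is asserted rather than proved. The inequality $\kappa(s)^{-n}P_s^n\bigl[\delta(y,\cdot)^{-\alpha}\bigr](x)\le c^n\,\delta(y,x)^{-\alpha}+C_0$, uniformly in $x,y$ and $n\ge n_0$, is essentially equivalent to the moment bound you are after: for $x$ with $\delta(y,x)$ bounded below it already asserts $\sup_{n,x,y}\kappa(s)^{-n}P_s^n[\delta(y,\cdot)^{-\alpha}](x)<\infty$, so nothing has been reduced, and the ingredients you list (proximality of the dual walk, smallness of $\alpha$, finiteness of $\int\|g\|^{s+\alpha}d\mu$) do not by themselves yield it. The real difficulty is to control the weight $e^{s\sigma(G_n,x)}/\kappa(s)^n$ \emph{jointly} with the event $\{\delta(y,G_nx)\le e^{-\varepsilon n}\}$, i.e.\ to prove a hyperplane-avoidance bound under the changed measure $\mathbb Q_s^x$; the factor $\delta(G_n^*y,x)^{-\alpha}\ge 1$ in your iterated formula works against you and must be beaten by a quantitative input of this kind. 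The paper manages this only for $|s|<s_0$ because the H\"older step in the proof of Proposition \ref{Thm-Regu-s} costs a factor $\kappa(s)^{-n}\bigl(\mathbb E\,e^{2s\sigma(G_n,x)}\bigr)^{1/2}$, which can be made sub-exponential only for $s$ near $0$; extending it to all of $I_\mu^+$ is exactly the substantial content of \cite{GL16} that your sketch leaves open. A minor further point: integrating your drift bound against $\nu_s$ via $P_s\nu_s=\kappa(s)\nu_s$ presupposes $\nu_s(\delta(y,\cdot)^{-\alpha})<\infty$; one should apply the bound to the truncations $\delta(y,\cdot)^{-\alpha}\wedge M$ and let $M\to\infty$, which is routine but needs to be said.
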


Our second result extends Theorem \ref{Th-main-hyperplane001} to proper algebraic subsets of $\mathbb P^{d-1}$.  


\begin{theorem}
\label{Th-main-subsets001}
Assume conditions \ref{Cond strong-irred} and \ref{Cond proxim}. 
Then, for any $s\in I_{\mu}^+$ and any proper algebraic subset $Y$ of $\mathbb P^{d-1}$ 
it holds that either $\nu_s(Y) =0$ or $\nu_s(Y) =1$. 
\end{theorem}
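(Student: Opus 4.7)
The plan is to adapt the proof of Theorem \ref{Lemma-Fursten-set001} by replacing the $\mu$-stationarity of $\nu$ with the eigenmeasure relation $P_s\nu_s=\kappa(s)\nu_s$. The organising object is the \emph{algebraic hull} $Y^*$ of $\nu_s$, defined as the smallest algebraic subset of $\mathbb P^{d-1}$ with $\nu_s(Y^*)=1$; this is well-defined and itself algebraic by the Noetherian property of algebraic subsets (any descending chain stabilises, so the intersection is realised by a finite sub-intersection).

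First I would establish that $Y^*$ is $\Gamma_\mu$-invariant. Plugging $\varphi=\mathbf 1_{Y^*}$ into $P_s\nu_s=\kappa(s)\nu_s$ and using the identity $\kappa(s)=\int\!\int e^{s\sigma(g,x)}\,d\mu(g)d\nu_s(x)$, one obtains
\[
\int_{\mathbb G}\int_{g^{-1}((Y^*)^c)} e^{s\sigma(g,x)}\,d\nu_s(x)\,d\mu(g)=0.
\]
Non-negativity of the integrand forces $\nu_s(g^{-1}Y^*)=1$ for $\mu$-a.e.\ $g$. Since $g$ is a projective automorphism, $g^{-1}Y^*$ is algebraic of the same dimension and degree as $Y^*$; the minimality of the algebraic hull then gives $Y^*\subseteq g^{-1}Y^*$, i.e., $gY^*\subseteq Y^*$. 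As $\{g:gY^*\subseteq Y^*\}$ is closed in $\mathbb G$, the inclusion extends from $\supp\mu$ to all of $\Gamma_\mu$.

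The theorem then reduces to the claim that $\nu_s$ restricted to $Y^*$ assigns mass zero to every proper algebraic subset of $Y^*$; indeed, for any algebraic $Y\subseteq\mathbb P^{d-1}$, either $Y\supseteq Y^*$, whence $\nu_s(Y)=1$, or $Y\cap Y^*\subsetneq Y^*$, whence $\nu_s(Y)=\nu_s(Y\cap Y^*)=0$ by the claim. To prove the claim I would argue by contradiction: suppose some proper algebraic $Z\subsetneq Y^*$ satisfies $\nu_s(Z)>0$, and pick such $Z$ irreducible of minimal dimension. The collection $\mathcal F$ of irreducible algebraic subsets of this dimension with positive $\nu_s$-measure is at most countable, since distinct members intersect in strictly smaller dimension (hence in $\nu_s$-null sets) and the total mass is bounded by one. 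Repeating the algebraic-hull construction for the restriction $\nu_s|_{\bigcup\mathcal F}$ and transferring its invariance through the eigenmeasure equation produces a $\Gamma_\mu$-invariant algebraic subset strictly contained in $Y^*$ and carrying positive $\nu_s$-measure; its own algebraic hull then yields a strictly smaller algebraic subset of full $\nu_s$-measure, contradicting the minimality of $Y^*$.

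The main obstacle is this final reduction: one must transfer the partial $\Gamma_\mu$-action on the countable stratum $\mathcal F$ into a genuine $\Gamma_\mu$-invariant algebraic subset strictly inside $Y^*$, despite the fact that $\nu_s$ is not $\mu$-stationary and that the invariance must be extracted from the eigenequation with the multiplicative weight $e^{s\sigma(g,x)}$. The dominance of the eigenvalue $\kappa(s)$ in the $P_s$-spectrum together with conditions \ref{Cond strong-irred} and \ref{Cond proxim} play here the same role that strict $\mu$-stationarity plays in the proof of Theorem \ref{Lemma-Fursten-set001}, with proximal attractors preventing the stratum $\mathcal F$ from degenerating and strong irreducibility ruling out small invariant linear subvarieties that could support the mass.
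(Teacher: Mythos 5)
The opening steps of your proposal are sound: the algebraic hull $Y^*$ is well defined by the Noetherian property, and since the weight $e^{s\sigma(g,x)}$ is strictly positive, plugging $\varphi=\mathbf 1_{Y^*}$ into $\nu_s(P_s\varphi)=\kappa(s)\nu_s(\varphi)$ does force $\nu_s(g^{-1}Y^*)=1$ for $\mu$-a.e.\ $g$, whence $\Gamma_\mu$-invariance of $Y^*$; and the reduction of the theorem to the statement that every proper algebraic subset of $Y^*$ is $\nu_s$-null is exactly the paper's reduction (Proposition \ref{prop-Y_0nu_s001}, with $Y^*$ playing the role of the real trace of $X_\mu$, cf.\ Lemma \ref{lemma-supp of mes nu}). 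But the final step, which you yourself flag as ``the main obstacle'', is a genuine gap, and it is precisely the point where the $s\neq 0$ case differs from Theorem \ref{Lemma-Fursten-set001}. For a set $Z$ with $0<\nu_s(Z)<1$, the eigenmeasure identity only gives
\begin{align*}
\int_{\mathbb G}\int_{g^{-1}Z} e^{s\sigma(g,x)}\,\nu_s(dx)\,\mu(dg)=\kappa(s)\,\nu_s(Z),
\end{align*}
and because the weight depends jointly on $g$ and $x$, this does not compare $\nu_s(Z)$ with the numbers $\nu_s(g^{-1}Z)$: the zero--one dichotomy you used for $Y^*$ works only when the complement is null, where strict positivity of the weight suffices. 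Consequently ``repeating the algebraic-hull construction for $\nu_s|_{\bigcup\mathcal F}$ and transferring its invariance through the eigenmeasure equation'' is not justified: the restricted measure is neither stationary nor an eigenmeasure of $P_s$, and no maximum-principle argument is available at the level of $\nu_s$ alone; invoking the dominance of $\kappa(s)$ or proximal attractors is not an argument. There is also a smaller logical slip at the end: even if you had produced a $\Gamma_\mu$-invariant algebraic set $W\subsetneq Y^*$ of positive but not full measure, this would not contradict the minimality of the hull $Y^*$, which constrains only full-measure sets; the contradiction must come from the minimality of $X_\mu$ as the unique closed $H_\mu$-orbit (Lemma \ref{Lemma-subvarH001}) together with its irreducibility (Lemma \ref{Lemma-subvarH002}).

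The paper supplies exactly the missing mechanism. It replaces $\nu_s$ by the family of equivalent measures $\upsilon_s^y(dx)=\delta(x,y)^s r_s^*(y)^{-1}\nu_s(dx)$, which satisfies the harmonicity relation $\upsilon_s^y(\varphi)=\int_{\mathbb G} g\upsilon_s^{g^*y}(\varphi)\, q_s^*(g,y)\,\mu(dg)$ with the probability kernel $q_s^*(g,y)\mu(dg)$ (Lemma \ref{lemmaSG-001}); this restores a stationarity-type identity to which the maximum principle (Lemma \ref{lemma-Princ-max}) applies. One then maximizes $h(y)=\max\{\upsilon_s^y(Y)\}$ over minimal-dimension irreducible subsets, uses the total-variation continuity of $y\mapsto\upsilon_s^y$ (Lemma \ref{lemma-conTVnorm001}) and the compactness of $\Lambda(\Gamma_\mu^*)$ to see that the finite set of maximizers $\mathcal W(y)$ is locally constant and $\Gamma_\mu$-equivariant (Lemmas \ref{lemma-ccc101}--\ref{lemma-ccc104}), so that its union is a $\Gamma_\mu$-invariant algebraic set; Lemmas \ref{Lemma-subvarH001} and \ref{Lemma-subvarH002} then force $d_0=\dim X_\mu$ (Proposition \ref{lemma-dimension001}), which is what makes your intended contradiction work. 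Without this device, or some substitute that converts the weighted eigen-equation into a genuine stationarity for sets of intermediate measure, your argument cannot be completed.
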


We are able  to prove a similar assertion for  small negative $s<0$.
First we show in Section \ref{sec-stationarymes001} the existence and the uniqueness of the egenmeasure $\nu_s$
for small negative values $s<0$.
\begin{proposition}
\label{Th-main-subsets002}
Assume conditions \ref{Cond strong-irred}, \ref{Cond proxim} and \ref{Two sided exponential moment}. 
Then, there exists $s_0>0$ such that for any negative $s\in [-s_0,0)$, 
the transfer operator $P_s$ has a unique probability eigenmeasure $\nu_s$ on $\mathbb P^{d-1}$
corresponding to the eigenvalue $\kappa(s)$. Similarly, the transfer operator $P_s^*$ 
has a unique probability eigenmeasure $\nu_s^*$ on $(\mathbb P^{d-1})^*$
corresponding to the same eigenvalue $\kappa(s)$. 
\end{proposition}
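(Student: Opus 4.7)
The plan is to extend the spectral-theoretic construction of $\nu_s$ and $\nu_s^*$ carried out for $s\in I_\mu^+$ by Guivarc'h--Le Page \cite{GL16} to a left neighborhood of the origin, by analytic perturbation around $s=0$. Hypothesis \ref{Two sided exponential moment} is precisely the ingredient that opens such a neighborhood: it provides the moment estimates needed on the negative side.

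First I would fix a small H\"older exponent $\gamma\in(0,\alpha)$, where $\alpha$ is as in \ref{Two sided exponential moment}, and work on the Banach space $\mathcal H^\gamma(\mathbb P^{d-1})$ of $\gamma$-H\"older complex functions with the usual norm $\|\varphi\|_\gamma=\|\varphi\|_\infty+[\varphi]_\gamma$. The standard two-point Lipschitz estimate $|\sigma(g,x)-\sigma(g,x')|\leq C(\log N(g))\,d(x,x')$ combined with the bound $|e^{s\sigma(g,x)}|\leq N(g)^{|\operatorname{Re} s|}$ show, using $\int_{\mathbb G} N(g)^\alpha \mu(dg)<\infty$ from \ref{Two sided exponential moment}, that for complex $s$ in the strip $|\operatorname{Re} s|\leq\alpha-\gamma$ the family $s\mapsto P_s$ is holomorphic with values in the bounded operators on $\mathcal H^\gamma(\mathbb P^{d-1})$; the same is true of $s\mapsto P_s^*$ on $\mathcal H^\gamma((\mathbb P^{d-1})^*)$.

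Next I would invoke the Le Page spectral gap for the Markov operator $P_0$: under \ref{Cond strong-irred} and \ref{Cond proxim}, on $\mathcal H^\gamma$ one has $P_0=\Pi_0+Q_0$, where $\Pi_0\varphi=\nu(\varphi)\cdot\mathbf{1}$ is a rank-one spectral projector and $Q_0$ has spectral radius strictly less than $1$. By standard analytic perturbation theory for isolated simple eigenvalues, there exists $s_0\in(0,\alpha-\gamma)$ such that for every real $s\in(-s_0,s_0)$ the operator $P_s$ admits an analytic decomposition $P_s=\kappa_+(s)\Pi_s+Q_s$, with $\kappa_+(s)$ analytic, $\kappa_+(0)=1$, $\Pi_s=r_s\otimes\nu_s$ a rank-one projector normalized by $\langle\nu_s,r_s\rangle=1$, and the spectral radius of $Q_s$ strictly less than $|\kappa_+(s)|$; shrinking $s_0$ if needed, $r_s$ stays close to $\mathbf{1}$ in $\mathcal H^\gamma$ and is therefore uniformly strictly positive. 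To upgrade $\nu_s$ from a continuous linear functional on $\mathcal H^\gamma$ to a positive Radon measure I would test it on a nonnegative continuous $\varphi$ and use
\begin{equation*}
\langle\nu_s,\varphi\rangle\cdot\nu(r_s)=\lim_{n\to\infty}\kappa_+(s)^{-n}\int_{\mathbb P^{d-1}} P_s^n\varphi\,d\nu\geq 0,
\end{equation*}
the inequality holding because $P_s^n$ preserves positivity (the weights $e^{s\sigma}$ are positive) and $\nu$ is a probability measure; normalizing gives a probability eigenmeasure. The same construction for $P_s^*$ produces $\nu_s^*$ with eigenvalue $\kappa_-(s)$. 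For uniqueness within probability eigenmeasures, if $\nu'$ is a probability measure with $\nu' P_s=\lambda\nu'$, pairing with the positive $r_s$ gives $\lambda\langle\nu',r_s\rangle=\kappa_+(s)\langle\nu',r_s\rangle$, forcing $\lambda=\kappa_+(s)$, and the rank-one structure of $\Pi_s$ then forces $\nu'=\nu_s$ after normalization. Finally, $\kappa_+(s)=\kappa_-(s)$ on $(-s_0,s_0)$ by analytic continuation: both $\kappa_\pm$ are analytic in a complex neighborhood of $0$ and coincide with the Guivarc'h--Le Page eigenvalue $\kappa(s)$ on the real ray $[0,s_0)\subset I_\mu^+$, hence on the whole neighborhood, and I define $\kappa(s)$ on $(-s_0,0)$ by this common value.

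The hard part, I expect, will be the first step: verifying that $s\mapsto P_s$ is a well-defined, bounded, holomorphic family on $\mathcal H^\gamma$ when $s$ is allowed to be negative. For $s<0$ the weight $e^{s\sigma(g,x)}$ grows in the directions along which $g$ contracts, and controlling simultaneously the supremum norm and the H\"older seminorm leads to an integrability constraint of the form $\int_{\mathbb G} N(g)^{\gamma-s}\mu(dg)<\infty$. This is precisely what \ref{Two sided exponential moment} guarantees, and it forces the choice $s_0<\alpha-\gamma$; the subsequent perturbation and uniqueness arguments are then essentially formal.
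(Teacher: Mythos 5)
Your proposal is correct in substance, and its core mechanism --- analytic perturbation of $s\mapsto P_s$ around the spectral gap of $P_0$ on a H\"older space, with condition \ref{Two sided exponential moment} supplying the moment bounds that open a complex neighbourhood of $0$ --- is exactly the paper's (which quotes Le Page for the gap and Hennion--Herv\'e for the perturbation in Lemma \ref{lemma-uniqueness of eigenmeasure001}). You deviate in two components. First, for existence and positivity of the eigenmeasure the paper does not argue as you do: it first produces a genuine probability eigenmeasure of $P_s'$ on $\mathcal C(\mathbb P^{d-1})'$ via the Brouwer--Schauder--Tychonoff fixed point theorem (Lemma \ref{lemma-existence eigenmes001}) and only afterwards identifies it with the perturbation-theoretic functional $\theta_s$; you instead extract positivity of $\theta_s$ directly from positivity of $P_s^n$ and the limit $\kappa(s)^{-n}\nu(P_s^n\varphi)\to\theta_s(\varphi)\nu(r_s)$, which is more economical, but should be run on nonnegative \emph{H\"older} test functions (where the spectral decomposition is available) and then extended to $\mathcal C(\mathbb P^{d-1})$ by density and Riesz; you also implicitly need $\kappa(s)$ and $r_s$ to be real (resp.\ positive) for real $s$, which your ``$r_s$ close to $\mathbf 1$'' remark does not by itself deliver --- the paper settles this by a short conjugation-plus-uniqueness argument, and your limit with $\varphi=\mathbf 1$ would do as well; these are routine patches, not gaps. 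Second, for the equality of the two eigenvalues the paper defers to Lemma \ref{Lemma-expleinenfun-s-neg}, where $\kappa^*(s)=\kappa(s)$ is obtained from the explicit eigenfunction formula, which requires the H\"older regularity of $\nu_s$ (Proposition \ref{Thm-Regu-s}); your analytic-continuation argument from the identity on $[0,s_0)\subset I_\mu^+$ (where Guivarc'h--Le Page give the common eigenvalue) is a shorter route that avoids that regularity input. Your uniqueness step (pairing a putative eigenmeasure with the positive $r_s$ to force $\lambda=\kappa(s)$, then using the rank-one projection to force proportionality) coincides with the paper's.
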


For negative values of $s$  we can prove the following analogue of  Theorem \ref{Th-main-subsets001}.

\begin{theorem}
\label{proposition-exist001}
Assume conditions \ref{Cond strong-irred}, \ref{Cond proxim} and \ref{Two sided exponential moment}. 
Then, there exists $s_0>0$ such that for any negative $s\in [-s_0,0)$
and any proper algebraic subset $Y$ of $\mathbb P^{d-1}$ 
it holds that either  
$\nu_s(Y) =0$ or $\nu_s(Y) =1$. 
\end{theorem}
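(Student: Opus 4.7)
The argument parallels that of Theorem \ref{Th-main-subsets001}, the only novelty being the handling of negative $s$. Proposition \ref{Th-main-subsets002} furnishes the unique probability eigenmeasure $\nu_s$ for $s \in [-s_0, 0)$; condition \ref{Two sided exponential moment} ensures that $\int_{\mathbb G} e^{s\sigma(g,x)}\mu(dg)$ is finite uniformly in $x$ for $|s|$ small, since $|\sigma(g,x)|\leq \log N(g)$. Consequently, the eigenmeasure relation $P_s\nu_s=\kappa(s)\nu_s$ can be tested against indicators of Borel sets, yielding
\begin{equation}\label{eq:PlanE}
\kappa(s)\,\nu_s(A) \;=\; \int_{\mathbb G}\!\int_{\mathbb P^{d-1}} e^{s\sigma(g,x)}\,\mathbf 1_A(gx)\,\nu_s(dx)\,\mu(dg)
\end{equation}
for every Borel set $A \subseteq \mathbb P^{d-1}$.

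Suppose $Y \subsetneq \mathbb P^{d-1}$ is a proper algebraic subset with $\nu_s(Y)>0$. By the Noetherianity of the Zariski topology on $\mathbb P^{d-1}$, there exists a minimal algebraic subset $Y^* \subseteq Y$ with $\nu_s(Y^*)>0$. Applying \eqref{eq:PlanE} with $A=Y^*$, positivity of the left-hand side implies that, on a set of positive $\mu$-measure of $g$, one has $\nu_s(g^{-1}Y^*\cap Y^*)>0$; since $g^{-1}Y^*\cap Y^*$ is an algebraic subset of $Y^*$, minimality forces $g^{-1}Y^*\cap Y^* = Y^*$, i.e.\ $gY^*\subseteq Y^*$ for $\mu$-a.e.\ such $g$. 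Upgrading to every $g \in \supp\mu$ by continuity of $g\mapsto gY^*$ in the Hausdorff topology on closed subsets, and then to every $g \in \Gamma_\mu$ by semigroup closure (combined with a Noetherian argument exploiting invertibility of $g$ to promote inclusion to equality), we obtain $\Gamma_\mu\, Y^* = Y^*$. Then, exactly as in the proof of Theorem \ref{Th-main-subsets001}, conditions \ref{Cond strong-irred} and \ref{Cond proxim} rule out the existence of a proper $\Gamma_\mu$-invariant algebraic subset: by proximality, the attractor $x_{g_0}^+$ of a proximal $g_0 \in \Gamma_\mu$ must lie in $Y^*$, and by strong irreducibility the Zariski closure of its $\Gamma_\mu$-orbit is all of $\mathbb P^{d-1}$. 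This contradicts properness of $Y^*$, so $Y$ cannot be proper with $0<\nu_s(Y)<1$, establishing the dichotomy.

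\textbf{Main obstacle.} The algebraic-geometric core of the argument---that $\Gamma_\mu$-invariance of a proper algebraic subset of $\mathbb P^{d-1}$ is incompatible with strong irreducibility and proximality---is identical to the case $s \in I_\mu^+$ and can be transcribed with no change. The substantive new work lies in two analytic ingredients: first, verifying that the weight $e^{s\sigma(g,x)}$ is sufficiently integrable for negative $s$, which is handled by \ref{Two sided exponential moment} together with the bound $|\sigma(g,x)|\leq \log N(g)$; second, ensuring that $\nu_s$ is a bona fide Borel probability measure on $\mathbb P^{d-1}$ so that the manipulations leading to \eqref{eq:PlanE} and to the positivity of $\nu_s(g^{-1}Y^*\cap Y^*)$ on a set of positive $\mu$-mass are justified. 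This second point is the content of Proposition \ref{Th-main-subsets002} and is the crux of the extension to $s<0$; once it is in place, the rest of the proof is a direct translation of the positive-$s$ case.
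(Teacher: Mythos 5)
Your proposal breaks down at its crucial final step, and in a way that conflicts with the very formulation of the theorem. You claim that conditions \ref{Cond strong-irred} and \ref{Cond proxim} ``rule out the existence of a proper $\Gamma_\mu$-invariant algebraic subset'' because the Zariski closure of the $\Gamma_\mu$-orbit of an attractor would be all of $\mathbb P^{d-1}$. Strong irreducibility only forbids invariant finite unions of proper \emph{linear} subspaces; it does not forbid invariant proper algebraic subsets. Example \ref{Example1} of the paper (with $\Gamma_\mu\subset O(q)$, which is strongly irreducible and proximal for $d\geq 3$) exhibits a proper algebraic set $Y$, the projectivized isotropic cone of $q$, which is $\Gamma_\mu$-invariant and carries full mass: $\nu_s(Y)=1$. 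Your argument, if it were correct, would show $\nu_s(Y)=0$ for \emph{every} proper algebraic subset, i.e.\ a zero law rather than a zero-one law, contradicting this example. The paper's proof instead works relative to $X_\mu=\mathrm{Zc}(\supp\nu)$, which may be a proper subset of $\mathbb P^{d-1}_{\mathbb C}$: one shows $X_\mu$ is the unique closed orbit of the Zariski closure $H_\mu$ and is irreducible, proves that every proper algebraic subset of $X_\mu$ is $\nu_s$-null (Propositions \ref{prop-Y_0nu_s001}--\ref{prop-Y_0nu_s002} via $\dim X_\mu=d_0$), and then gets the dichotomy according to whether $X_\mu\subseteq Y$ or not. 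You also misstate the proof of Theorem \ref{Th-main-subsets001}, which does not argue the way you describe.

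There are two further gaps even inside your invariance step. First, positivity of $\kappa(s)\nu_s(Y^*)$ in \eqref{eq:PlanE} only gives $\nu_s(g^{-1}Y^*)>0$ on a positive-$\mu$-measure set of $g$; it does not give $\nu_s(g^{-1}Y^*\cap Y^*)>0$, and without the intersection lying inside $Y^*$ your minimality argument has nothing to bite on. Second, and more structurally, $\nu_s$ is not $\mu$-stationary for $s\neq 0$: the weight $e^{s\sigma(g,x)}$ depends on $x$, so the maximum-principle style of argument that works for $\nu$ cannot be run directly on $P_s\nu_s=\kappa(s)\nu_s$. The paper's remedy is to tilt by the dual eigenfunction $r_s^*$, introduce the measures $\upsilon_s^y$ and the probability kernel $q_s^*(g,y)\mu(dg)$, and use the cohomological identity to obtain the exact harmonicity relation of Lemma \ref{lemmaSG-001}, to which the maximum principle (Lemma \ref{lemma-Princ-max}) applies; one then needs finiteness and equivariance of the set of maximizers $\mathcal W(y)$ and total-variation continuity of $y\mapsto\upsilon_s^y$. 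For negative $s$ the genuinely new analytic input is not merely the existence of $\nu_s$ (your ``main obstacle''), but the H\"older regularity of $\nu_s$ (Proposition \ref{Thm-Regu-s}), which is what makes $r_s^*(y)=\int\delta(x,y)^s\nu_s(dx)$ finite and $y\mapsto\upsilon_s^y$ continuous (Corollary \ref{Corol-TVcontin001}, Lemma \ref{Lemma-expleinenfun-s-neg}) despite the negative power of $\delta$. None of this machinery appears in your proposal, so the argument cannot be repaired by minor adjustments.
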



%


It is easy to see that all the conclusions of the Example \ref{Example1}
apply also to the measure $\nu_s$.


As an application of the stated results we use Theorem \ref{Lemma-Fursten-set001}
to establish a local limit theorems for 
the coefficients of products of random matrices in $GL(d,\mathbb R)$.

\begin{theorem}\label{Thm_LLT_a}
Assume conditions \ref{Cond strong-irred}, \ref{Cond proxim} and \ref{Two sided exponential moment}. 
Let $- \infty < a_1 < a_2 < \infty$ be real numbers.   
Then, as $n \to \infty$, uniformly in $ f \in (\mathbb R^d)^*$ and $v \in \mathbb R^d$ with $| f | =1$ and $|v|=1$, 
\begin{align*}
\mathbb{P} \Big(  \log |\langle f, G_n v \rangle|  - n \lambda \in [a_1, a_2]  \Big) 
= \frac{a_2 - a_1}{ \sigma \sqrt{2 \pi n} } (1+o(1)). 
\end{align*}
\end{theorem}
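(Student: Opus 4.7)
\medskip

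\noindent\textbf{Proof proposal for Theorem \ref{Thm_LLT_a}.}

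The plan is to follow the strategy outlined in Section~1.2 of the excerpt, combining a local limit theorem for the norm cocycle $\log|G_n v|$ coupled with the trajectory $G_n x$ with a careful discretization of the angular factor $\log\delta(y,G_nx)$, the latter being controlled via the zero-one law of Theorem \ref{Lemma-Fursten-set001}.

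First, I would use the key identity \eqref{eq-represent001}: for $x=\mathbb R v$ and $y=\mathbb R f$ with $|f|=|v|=1$,
\[
\log |\langle f, G_n v \rangle| = \log|G_n v| + \log \delta(y, G_n x).
\]
Fix a small parameter $\eta>0$ and partition $(-\infty,0]$ into intervals of length $\eta$, setting $Y_k^{\eta}=\{x\in\mathbb P^{d-1}:-\log\delta(y,x)\in\eta[k-1,k)\}$ for $k\geq 1$ and $Y_0^{\eta}=\{x:-\log\delta(y,x)<0\}$ (which is empty if $|f|=|v|=1$, so one starts at $k\geq 1$). On $\{G_nx\in Y_k^\eta\}$ the correction term $\log\delta(y,G_nx)$ lies in $-\eta[k-1,k)$, giving the upper bound
\[
\mathbb P\bigl(\log|\langle f,G_nv\rangle|-n\lambda\in[a_1,a_2]\bigr)\leq\sum_{k\geq 1}\mathbb P\bigl(\log|G_nv|-n\lambda\in I_\eta+\eta k,\ G_nx\in Y_k^\eta\bigr),
\]
with $I_\eta=[a_1-\eta,a_2+\eta]$, and a matching lower bound with a shifted interval.

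The second step is a local limit theorem with target on the projective space: for any continuous function $\varphi$ on $\mathbb P^{d-1}$ and any compact real interval $J$, uniformly in $x\in\mathbb P^{d-1}$,
\[
\mathbb P\bigl(\log|G_nv|-n\lambda\in J,\ G_nx\in\text{supp}\,\varphi\bigr)\approx\frac{\ell(J)}{\sigma\sqrt{2\pi n}}\int_{\mathbb P^{d-1}}\varphi\,d\nu,
\]
or more precisely, expectations of the form $\mathbb E[\varphi(G_nx)\psi(\log|G_nv|-n\lambda)]$ are asymptotic to $\tfrac{1}{\sigma\sqrt{2\pi n}}\nu(\varphi)\int\psi\,d\ell$. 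This is obtained via the Nagaev--Guivarc'h perturbation method applied to the family of transfer operators $P_{it}$ defined in \eqref{DefPs-001a}: under the assumed conditions these operators have a spectral gap on a suitable Hölder or Sobolev space (as in Le Page \cite{LeP82}, Guivarc'h--Le Page \cite{GL16}, Benoist--Quint), the leading eigenvalue $\kappa(it)$ is $C^2$ in $t$ near $0$ with $\kappa''(0)=-\sigma^2$, and Fourier inversion against a smooth bump then integration yields the asymptotics. To handle the indicator $\mathbf 1_{Y_k^\eta}$ (which is not continuous), I would sandwich it between continuous functions $\varphi_k^-\leq\mathbf 1_{Y_k^\eta}\leq\varphi_k^+$ supported in the thickened strip $\overline Y_k^\eta=\{x:-\log\delta(y,x)\in\eta[k-1-\varepsilon,k+\varepsilon)\}$, then let $\varepsilon\to 0$ at the end.

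Applying this LLT termwise gives the upper bound
\[
\limsup_{n\to\infty}\sigma\sqrt{2\pi n}\,\mathbb P\bigl(\log|\langle f,G_nv\rangle|-n\lambda\in[a_1,a_2]\bigr)\leq\ell(I_\eta)\sum_{k\geq 1}\nu(\overline Y_k^\eta).
\]
The crucial step is showing that $\sum_k\nu(\overline Y_k^\eta)\to 1$ as $\varepsilon\to 0$, $\eta\to 0$. After the limit $\varepsilon\to 0$ the sum equals $\sum_k\nu(Y_k^\eta)$ plus the $\nu$-mass of the boundary hypersurfaces $\{x:\log\delta(y,x)=-\eta k\}$. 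Each such hypersurface is algebraic in $\mathbb P^{d-1}$ (it is the projective image of $\{v:\langle f,v\rangle^2=e^{-2\eta k}|v|^2\}$, a zero set of a polynomial), so by Theorem \ref{Lemma-Fursten-set001} it has $\nu$-measure $0$ or $1$. Since $\nu$ is a probability, there is at most one level $t<0$ at which $\nu(\{\log\delta(y,\cdot)=t\})=1$; choosing $\eta$ so that this exceptional value is not of the form $-\eta k$ kills all boundary contributions. Then $\sum_k\nu(Y_k^\eta)=\nu(\{\log\delta(y,\cdot)<0\})=1$ (the set $\{\log\delta(y,\cdot)=0\}$ is a projective hyperplane, hence $\nu$-null by Furstenberg, Theorem \ref{Lemma-Fursten-set}). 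Finally $\ell(I_\eta)\to a_2-a_1$ as $\eta\to 0$. The lower bound is obtained identically.

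The main obstacle is twofold: first, obtaining the coupled LLT with explicit and sufficient uniformity in the starting point $x$ and with sharp control on the cumulative error over infinitely many bins $k$ (for large $k$ the mass $\nu(Y_k^\eta)$ must be summable, which requires a Hölder regularity tail estimate on the stationary measure along the hyperplane $\{\delta(y,\cdot)=0\}$ of the type \eqref{eq-reg001}, or equivalently to truncate the sum at $k\leq C\log n$ and control the tail via \eqref{Regularity_Ineq 001}); second, the delicate choice of $\eta$ tying the discretization to the zero-one law, which is exactly the novel ingredient supplied by Theorem \ref{Lemma-Fursten-set001}. The uniformity in $(f,v)$ on the unit sphere follows from the uniformity in $(x,y)$ built into the spectral estimates and from the compactness of $\mathbb P^{d-1}\times(\mathbb P^{d-1})^*$.
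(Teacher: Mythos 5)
Your proposal is correct and follows essentially the same route as the paper's proof: the decomposition \eqref{eq-represent001}, discretization of $\log\delta(y,G_nx)$ into $\eta$-bins with truncation at $k\leq C\log n$ controlled by the regularity estimate (Lemma \ref{Lem_Regularity_nus}), a smoothed Fourier/perturbed-operator local limit theorem with explicit dependence on the H\"older norm of the projective target (the paper's Proposition \ref{Prop Rn limit1}), and the zero-one law of Theorem \ref{Lemma-Fursten-set001} together with the choice of $\eta$ avoiding the at most one exceptional level, are exactly the ingredients used there. The only points you leave schematic (the $e^{\eta\gamma k}$ growth of the H\"older norms of the bin functions versus the $O(1/\sqrt n)$ rate, and the separate treatment of upper and lower bounds via one-sided smoothing) are carried out in the paper along the lines you indicate.
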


 To the best of our knowledge, a local limit theorem  for the coefficients of products of random matrices
 has not been established in the literature so far.
Local limit theorem for sums of independent random variables
 have been studied by many authors:
 we refer the reader to Gnedenko \cite{Gne48}, Stone \cite{Sto65},  Borovkov and Borovkov \cite{BB08}, 
  Breuillard \cite{Bre05}. 
 For the norm cocycle of products of random matrices, local limit theorems have been proved 
 by Le Page \cite{LeP82}, Guivarc'h \cite{Gui15},  Benoist and Quint \cite{BQ16b}. 
The local limit theorems established in \cite{BQ16b} play important role for
 studying stationary measures on finite volume homogeneous spaces, see \cite{BQ13} for details.  

Other potential applications of Theorem \ref{Lemma-Fursten-set001} are the Berry-Esseen bound,
the Edgeworth expansion and the Cram\'er type moderate deviation.
A paper in progress is \cite{XGL20c}, where a local limit theorem with moderate deviations
has been established.
In its turn Theorems \ref{Th-main-subsets001} and \ref{proposition-exist001} 
can be used to establish various limit theorems 
like the large deviation principle and local limit theorem with large deviations for the coefficients \cite{XGL20d}.
\section{Properties of the stationary measure} \label{sec-stationarymes001}


\subsection{The existence and the uniqueness of the probability eigenmeasure for negative $s<0$ } \label{sec-exist eigenmes}

In this section we prove the existence and the uniqueness of the probability eigenmeasure of the transfer operator $P_s$ 
for negative $s<0$. Actually we shall establish it for $s$ in a sufficiently small neighborhood of $0$, i.e. for $|s|<s_0$, for some small $s_0>0,$ from which the result for negative $s\in (-s_0,0)$ follows. 
The idea is as follows. From a very general fixed point theorem due to Brouwer-Schauder-Tychonoff we deduce 
the existence of such a measure. To establish the uniqueness we make use of the  
general results on the perturbation theory of linear operators.



We proceed to state a fixed point theorem for measures in an abstract context. 
Let $X$ be a compact topological space and $\mathcal C(X)$
 be the space of complex valued continuous functions 
on $X$ equipped with the uniform norm.
Denote by $\mathcal C(X)'$ the topological dual space of $\mathcal C(X)$ equipped with the weak-$*$ topology.
Recall that the weak-$*$ topology is the weakest topology on $\mathcal C(X)'$ 
for which the mapping $\nu \in \mathcal C(X)'  \mapsto \nu(\varphi) \in \mathbb C$ 
is continuous for any $\varphi\in \mathcal C(X)$
and that 
by Riesz representation theorem, the space
$\mathcal C (X)'$ coincides with the 
 space of complex valued Borel measures on $X$.
 
\begin{lemma} \label{lemma-existence eigenmes001}
Assume $T: \mathcal C(X)\mapsto \mathcal C(X)$ is a bounded linear operator such that
$T(f) > 0$ for any $f > 0$. 
Then, there exist a constant $\alpha>0$ and a Borel probability measure $\nu_0$ on $X$ such that
$T'\nu_0 = \alpha \nu_0$, where $T': \mathcal C(X)' \mapsto \mathcal C(X)'$ is the adjoint operator of $T$.
\end{lemma}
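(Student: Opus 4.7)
The plan is to use the Brouwer-Schauder-Tychonoff fixed point theorem applied to the nonlinear renormalization of $T'$ on the space of probability measures, which is the standard Krein-Rutman style argument adapted to the weak-$*$ topology.

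First I would recall the setup: the space $\mathcal C(X)'$ is a locally convex Hausdorff topological vector space when equipped with the weak-$*$ topology, and by the Banach-Alaoglu theorem the unit ball of $\mathcal C(X)'$ is weak-$*$ compact. The subset $\mathcal P(X) \subset \mathcal C(X)'$ consisting of Borel probability measures is convex and weak-$*$ closed (both nonnegativity, tested by $\nu(\varphi) \geq 0$ for $\varphi \geq 0$, and the normalization $\nu(\mathbf 1)=1$ are preserved under weak-$*$ limits), hence $\mathcal P(X)$ is a convex, weak-$*$ compact subset of $\mathcal C(X)'$.

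Next I would promote the hypothesis on $T$ to a positivity statement for $T'$. The assumption $Tf > 0$ whenever $f > 0$ implies, by applying it to $f+\varepsilon\mathbf 1$ and letting $\varepsilon\to 0$, that $Tf \geq 0$ whenever $f\geq 0$. In particular $T\mathbf 1$ is a strictly positive continuous function on the compact space $X$, so it is bounded below by some $c>0$. For any $\nu \in \mathcal P(X)$ this gives $(T'\nu)(\mathbf 1) = \nu(T\mathbf 1) \geq c > 0$, so one can define
\begin{equation*}
\Phi(\nu) = \frac{T'\nu}{(T'\nu)(\mathbf 1)}.
\end{equation*}
By construction $\Phi(\nu)(\mathbf 1)=1$, and $\Phi(\nu)(\varphi)\geq 0$ for $\varphi\geq 0$ since then $T\varphi\geq 0$ and $\nu(T\varphi)\geq 0$. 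Hence $\Phi$ maps $\mathcal P(X)$ into itself.

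Then I would check continuity of $\Phi$ for the weak-$*$ topology. If $\nu_n \to \nu$ in the weak-$*$ sense, then for every $\varphi \in \mathcal C(X)$ one has $\nu_n(T\varphi) \to \nu(T\varphi)$ because $T\varphi \in \mathcal C(X)$; equivalently $(T'\nu_n)(\varphi) \to (T'\nu)(\varphi)$. Taking $\varphi=\mathbf 1$ shows the denominator converges to $(T'\nu)(\mathbf 1)>0$, hence $\Phi(\nu_n) \to \Phi(\nu)$ weak-$*$. Applying the Brouwer-Schauder-Tychonoff fixed point theorem to the continuous self-map $\Phi$ of the convex, compact set $\mathcal P(X)$ yields $\nu_0 \in \mathcal P(X)$ with $\Phi(\nu_0)=\nu_0$, which rearranges to $T'\nu_0 = \alpha\nu_0$ with $\alpha := \nu_0(T\mathbf 1) \geq c > 0$.

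The main (and essentially only) obstacle is verifying that all the abstract hypotheses of the fixed point theorem really hold in the weak-$*$ topology — in particular the weak-$*$ continuity of $\Phi$, which ultimately reduces to the fact that $T$ sends $\mathcal C(X)$ into $\mathcal C(X)$, and the strict positivity $\nu(T\mathbf 1)>0$, which relies on compactness of $X$ together with the strict positivity assumption on $T$. No spectral theory or irreducibility hypothesis on $T$ is needed at this stage; these would only enter if one wanted to address uniqueness of $\nu_0$ or simplicity of $\alpha$, which this lemma does not claim.
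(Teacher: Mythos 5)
Your proposal is correct and follows essentially the same route as the paper: normalize the adjoint $T'$ to a self-map of the weak-$*$ compact convex set of probability measures and apply the Brouwer--Schauder--Tychonoff fixed point theorem, with $\alpha=\nu_0(T\mathbf 1)>0$. The only cosmetic remark is that since $X$ is merely assumed compact (not metrizable), the weak-$*$ continuity of $\Phi$ should be phrased via nets or, as in the paper, deduced directly from the continuity of $T'$ and of $\nu\mapsto\nu(T\mathbf 1)$ rather than via sequences.
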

\begin{proof}
Recall the Brouwer-Schauder-Tychonoff theorem (see \cite[Appendix]{Bonsall-book1962}): 
Let $\mathcal P$ be a convex and compact subset inside a  topological vector space $V$ and
$A: \mathcal P \mapsto \mathcal P$ be a continuous mapping. Then there exists $\nu_0\in \mathcal P$ such that $A\nu_0=\nu_0$.

We shall apply Brouwer-Schauder-Tychonoff theorem with $V=\mathcal C(X)'$.
By Riesz representation theorem 
the space $\mathcal C (X)'$ coincides with the 
space of complex valued Borel measures on $X$.
 Let $\mathcal P$ be the subspace of $\mathcal  C (X)'$  formed by probability measures.
Then the set $\mathcal P$ is convex and by the Banach-Alaoglu theorem it is compact in the weak-$*$ topology.
Since $T1>0$ everywhere, the mapping 
$\nu \in \mathcal P\mapsto \nu(T(1)) \in \mathbb R$ does not vanish on $\mathcal P$.
Note that for any $\varphi \in \mathcal C (X)$ with $\varphi \geq 0$,  
it holds $T'\nu (\varphi) = \nu(T\varphi) \geq 0$ since $T\varphi \geq 0$.   
This allows to define the mapping $A:\mathcal P\mapsto \mathcal P$ by setting, for any $\nu\in \mathcal P,$
$$
A\nu =\frac{T' \nu}{\nu(T1)}.
$$ 
As $T$ is a bounded linear operator on $\mathcal{C}(X)$, the adjoint operator $T'$ is continuous for the weak-$*$ topology.
Since $T1$ is a continuous function on $X$, the mapping $\nu\mapsto \nu(T1)$ is also continuous for the weak-$*$ topology.  
Therefore, the mapping $A$ is continuous for the  weak-$*$ topology.
By the Brouwer-Schauder-Tychonoff theorem $A$ has a fixed point $\nu_0$: $A\nu_0=\nu_0$.
The assertion follows with $\alpha = \nu_0(T(1))$.
\end{proof}

Let $\gamma \in (0,1)$ be a fixed sufficiently small constant.
Consider the Banach space $\mathscr B_{\gamma} $ of $\gamma$-H\"older continuous functions
on $\mathbb P^{d-1}$ endowed with the norm
\begin{align*} 
\| \varphi \|_{\mathscr B_\gamma} = 
 \sup_{x\in \bb P^{d-1} } |\varphi(x)| 
  + \sup_{x \neq x',\ x,x'\in \bb P^{d-1} } \frac{ |\varphi(x)-\varphi(x')| }{ \mathbf{d}(x,x')^{\gamma} }, 
\end{align*}
where $\mathbf{d}(x,x')$ is the $\sin$ of the angle between the vector lines $x=\mathbb Rv $ and $x'=\mathbb R v'$ in $\mathbb P^{d-1}$:
$\mathbf{d}(x,x') =\sqrt{1-  \Big( \frac{\langle v,v' \rangle}{|v| |v'|} \Big)^2 }.$
The topological dual of $\mathscr B_\gamma$ endowed with the induced norm 
is denoted by $\mathscr B'_\gamma$.
Denote by $\mathscr B_\gamma^*$ the Banach space of $\gamma$-H\"older continuous functions
on $(\mathbb P^{d-1})^*$ endowed with the norm
\begin{align*} 
\| \varphi \|_{\mathscr B_\gamma^*} = 
 \sup_{y\in (\mathbb P^{d-1})^* } |\varphi(x)| 
  + \sup_{y \neq y',\  y,y'\in (\bb P^{d-1})^* } \frac{ |\varphi(y)-\varphi(y')| }{ \mathbf{d}(y,y')^{\gamma} }, 
\end{align*}
where $\mathbf{d}(y,y')$ is the 
$\sin$ of the angle between the vector lines $y=\mathbb R f$ and $y'=\mathbb R f'$ in
$\left(\mathbb P^{d-1}\right)^*$:
$\mathbf{d}(y,y') =\sqrt{1-  \Big( \frac{\langle f,f' \rangle}{|f| |f'|} \Big)^2 }.$

Recall that $\mathcal C (\mathbb P^{d-1})$ is the space 
of the continuous complex valued functions on $\mathbb P^{d-1}$
and
$\mathcal C (\mathbb P^{d-1})'$ is the space 
of the complex valued Borel measures on $\mathbb P^{d-1}$.

\begin{lemma} \label{lemma-uniqueness of eigenmeasure001}
Assume conditions \ref{Cond strong-irred}, \ref{Cond proxim} and \ref{Two sided exponential moment}. 
Then, there exists a positive constant $s_0$ 
such that, for any $|s|<s_0$ 
the operator $P_s$ has a
unique probability Borel eigenmeasure $\nu_s$
associated with the unique eigenvalue $\kappa(s)$. 
 \end{lemma}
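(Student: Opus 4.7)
The plan is to prove existence and uniqueness separately: existence by a direct application of Lemma \ref{lemma-existence eigenmes001}, and uniqueness by combining the spectral gap of $P_0$ on $\mathscr B_\gamma$ with analytic perturbation theory around $s=0$.

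For existence, I would apply Lemma \ref{lemma-existence eigenmes001} with $T=P_s$ and $X=\mathbb P^{d-1}$. Two points require checking. First, $P_s$ sends $\mathcal C(\mathbb P^{d-1})$ continuously into itself: the elementary bound $|\sigma(g,x)|\leq \log N(g)$ gives $|e^{s\sigma(g,x)}|\leq N(g)^{|s|}$, and condition \ref{Two sided exponential moment} yields $\int_{\mathbb G} N(g)^{|s|}\mu(dg)<\infty$ as soon as $|s|\leq \alpha$; dominated convergence then shows that $P_s\varphi$ is continuous in $x$ with $\|P_s\varphi\|_\infty\leq \|\varphi\|_\infty \int N(g)^{|s|}\mu(dg)$. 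Second, strict positivity $P_s\varphi>0$ for $\varphi>0$ is immediate from \eref{DefPs-001a}. Lemma \ref{lemma-existence eigenmes001} then produces a probability Borel measure $\nu_s$ and a constant $\kappa(s)>0$ with $P_s'\nu_s=\kappa(s)\nu_s$.

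For uniqueness I would transfer the problem to the Banach space $\mathscr B_\gamma$ and exploit perturbation theory. The starting point is the classical spectral gap of $P_0$ on $\mathscr B_\gamma$ due to Le Page (see \cite{LeP82, GL16}): under conditions \ref{Cond strong-irred} and \ref{Cond proxim}, the eigenvalue $1$ is simple and isolated in $\mathrm{sp}(P_0)$, and the rest of the spectrum lies strictly inside a disk of radius $<1$. I would then verify that $s\mapsto P_s$ is holomorphic from a complex disk $|s|<s_1$ into the space of bounded operators on $\mathscr B_\gamma$; the key input is a Hölder estimate of the form $|\sigma(g,x)-\sigma(g,x')|\leq C \log N(g)\, \mathbf d(x,x')^\gamma$, which, combined with \ref{Two sided exponential moment}, bounds $\|P_s\|_{\mathscr B_\gamma}$ by a convergent integral of terms involving $N(g)^{|s|}$ and $\log N(g)$. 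Kato's perturbation theorem then yields $s_0\in(0,s_1)$ and, for every $|s|<s_0$, a holomorphic simple isolated eigenvalue $\kappa(s)$ of $P_s$ near $1$ with one-dimensional eigenspaces (in $\mathscr B_\gamma$ and in $\mathscr B_\gamma'$), the remainder of the spectrum staying strictly inside a disk of radius $<|\kappa(s)|$.

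To conclude uniqueness, let $\nu_s$ be any probability eigenmeasure of $P_s'$ on $\mathcal C(\mathbb P^{d-1})'$ with some eigenvalue $\kappa$. Since $\gamma$-Hölder functions are dense in $\mathcal C(\mathbb P^{d-1})$ by Stone-Weierstrass, the restriction map $\mathcal C(\mathbb P^{d-1})'\to \mathscr B_\gamma'$ is injective, and $\nu_s$ becomes an eigenvector of the adjoint operator on $\mathscr B_\gamma'$ with eigenvalue $\kappa$. Testing $P_s'\nu_s=\kappa \nu_s$ against $\varphi=1$ gives $\kappa=\int P_s 1\, d\nu_s$, and since $P_s 1\to 1$ uniformly as $s\to 0$ (again by \ref{Two sided exponential moment}), we have $\kappa\to 1$; shrinking $s_0$ further, $\kappa$ must coincide with the perturbed eigenvalue $\kappa(s)$ provided by Kato's theorem. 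One-dimensionality of the corresponding eigenspace then forces $\nu_s$ to be proportional to a fixed eigenvector, and the total-mass normalization pins it down uniquely. The main obstacle is the Hölder control of the cocycle $\sigma(g,\cdot)$ and the ensuing analyticity of $\{P_s\}$ on $\mathscr B_\gamma$; once this is in place together with the spectral gap of $P_0$, the rest of the argument is a routine application of analytic perturbation theory.
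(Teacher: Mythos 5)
Your proposal is correct and follows essentially the same route as the paper: existence by applying Lemma \ref{lemma-existence eigenmes001} with $T=P_s$ and $X=\mathbb P^{d-1}$, and uniqueness from the spectral gap of $P_0$ on $\mathscr B_\gamma$ combined with analytic perturbation theory (the paper invokes \cite[Theorem III.8]{HH01} where you invoke Kato). The only real divergence is the endgame. The paper first shows that $\kappa(s)$ is real and that the perturbed eigenfunction $r_s$ is real and strictly positive for small real $s$, identifies the eigenvalue of the measure produced by the fixed point argument by pairing with $r_s$, and then iterates $\nu_s(P_s^n\varphi)=\kappa(s)^n\nu_s(\varphi)$ against the gap estimate to conclude $\nu_s\propto\theta_s$. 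You instead restrict an arbitrary probability eigenmeasure to $\mathscr B_\gamma$ (legitimate, since H\"older functions are dense in $\mathcal C(\mathbb P^{d-1})$, so the restriction is injective and nonzero), observe that its eigenvalue $\kappa=\nu_s(P_s1)$ is uniformly close to $1$, lies in $\mathrm{sp}(P_s|_{\mathscr B_\gamma})$, hence equals $\kappa(s)$, and finish with the one-dimensionality of the adjoint eigenspace plus the normalization $\nu_s(1)=1$. Both finishes work; the paper's has the side benefit of constructing $r_s>0$ and the realness of $\kappa(s)$, which are reused later (to define $Q_s$ and the change of measure), while yours is marginally shorter for the lemma as stated. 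One small inaccuracy in your sketch of the analyticity of $s\mapsto P_s$: the bound $|\sigma(g,x)-\sigma(g,x')|\leq C\log N(g)\,\mathbf d(x,x')^{\gamma}$ is not correct as stated; the standard estimate is $|\sigma(g,x)-\sigma(g,x')|\leq \min\big(2\log N(g),\, N(g)^{2}\mathbf d(x,x')\big)\leq C_\gamma N(g)^{2\gamma}\big(1+\log N(g)\big)^{1-\gamma}\mathbf d(x,x')^{\gamma}$, so a power of $N(g)$ is unavoidable. Condition \ref{Two sided exponential moment} still yields the required integrability once $\gamma$ and $|s|$ are small, so this slip does not affect the argument.
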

%
%
\begin{proof}
Note that for $s$ real and close to $0$, $P_s: \mathcal C (\mathbb P^{d-1}) \mapsto \mathcal C (\mathbb P^{d-1}) $ 
is a bounded linear operator. 
Moreover $P_s\varphi > 0$ for any $\varphi >0$ on  $\mathbb P^{d-1}$.
Denote by $P_s' : \mathcal C (\mathbb P^{d-1})' \mapsto \mathcal C (\mathbb P^{d-1})' $ 
the adjoint of the operator $P_s$.
Using  Lemma \ref{lemma-existence eigenmes001}
with $X=\mathbb P^{d-1}$, we get 
that there exists a probability Borel eigenmeasure $\nu_s$ such that
$P_s' \nu_s = \alpha(s) \nu_s$.
By the duality, for any $\varphi \in \mathcal C (\mathbb P^{d-1})$, we have
$\nu_s(P_s \varphi) = P_s' \nu_s (\varphi)$,
so that
\begin{align} \label{eq-nucirc001}
\nu_s(P_s \varphi)  = \alpha(s) \nu_s (\varphi).
\end{align}
The measure $\nu_s$ and the eigenvalue $\alpha(s)$ might a priori not be unique. 
We shall prove the uniqueness using the perturbation theory of linear operators. 

From the results of Le Page \cite{LeP82} it follows that
the operator $P_0$ has a spectral gap property on the Banach space 
$\mathscr B_\gamma,$ for some $\gamma >0.$
By perturbation theory \cite[Theorem III.8]{HH01} 
there exist constants 
 $s_0>0$, $C>0$, $\rho_0 \in (0,1)$ and holomorphic mappings 
 $s \mapsto \theta_s \in \mathscr B'_\gamma$, $s \mapsto r_s \in \mathscr B_\gamma$,
 $s \mapsto \kappa_s \in \mathbb C$ on $(-s_0, s_0)$
 such that $\theta_0=\nu$, $r_0=1$, $\nu(r_s)=1$, $\theta_s(r_s)= 1$,
\begin{align} \label{eq-perturb001}
\theta_s (P_s \varphi) = \kappa(s) \theta_s(\varphi), \quad P_s r_s = \kappa(s) r_s.    
\end{align}
Moreover, one can choose $s_0$ small enough such that $|\kappa(s)| > \rho_0$ and  
\begin{align} \label{eq-perturb002}
\left\| P_s^n \varphi -  \kappa(s)^{n} \theta_s(\varphi) r_s \right\|_{\mathscr B_\gamma} \leq 
 C \rho_0^n \left\|\varphi \right\|_{\mathscr B_\gamma}.
\end{align}
In particular, \eqref{eq-perturb002} implies that, 
for any complex $|s| < s_0$ the complex number $\kappa(s)$ 
is the unique eigenvalue of $P_s$ in $\mathscr B_\gamma$ with modulus strictly larger than $\rho_0$
and the associated eigenspace is $\mathbb C r_s$.
Indeed, let $\varphi\in \mathscr B_\gamma$, $\varphi \not=0$ and $\lambda\in \mathbb C$ be such that $P_s(\varphi) = \lambda\varphi$ and $\lambda \not = \kappa (s)$. Then 
\begin{align*} 
\lambda \theta_s(\varphi) = \theta_s(P_s \varphi) =  \kappa(s) \theta_s(\varphi), 
\end{align*}
hence $\theta_s(\varphi)=0$. 
Therefore,  \eqref{eq-perturb002} gives that
\begin{align*} 
| \lambda |^n \|\varphi \| _{\mathscr B_\gamma} 
=  \| P_s^n \varphi \| _{\mathscr B_\gamma} 
\leq  C \rho_0^n \|\varphi \|_{\mathscr B_\gamma}. 
\end{align*}
This implies that  $|\lambda | \leq \rho_0$,
which means that $\kappa(s)$ is the unique eigenvalue with modulus strictly larger than $\rho_0$.

Let us now show that the eigenspace associated to $\kappa(s)$ is spanned by the function $r_s$.
Indeed, if $\varphi$ is in this eigenspace 
then $\varphi \in  \mathscr B_\gamma $ and $P_s \varphi = \kappa(s) \varphi$. 
Again by \eqref{eq-perturb002}, we have 
\begin{align*} 
| \kappa(s) |^n \|\varphi - \theta_s (\varphi ) r_s  \| _{\mathscr B_\gamma} 
=   \|  P_s^n \varphi - \kappa(s)^{n} \theta_s(\varphi) r_s   \| _{\mathscr B_\gamma} 
\leq  C \rho_0^n \|\varphi \|_{\mathscr B_\gamma}. 
\end{align*}
Since $|\kappa(s)| > \rho_0$, we get $\varphi = \theta_s(\varphi) r_s$.

We shall use the uniqueness property of $\kappa(s)$
to show that $\kappa(s)$ is real and that $r_s$ takes real values for real $s \in (-s_0, s_0)$.
Indeed, as $s$ is real, for any  $\varphi\in \mathscr B_\gamma$
we have $P_s \overline \varphi = \overline{P_s \varphi }$,
which gives
$P_s\overline r_s = \overline{  P_s  r_s  } = \overline \kappa_s \overline r_s  $.
Since $\kappa(s)$ is the unique eigenvalue of $P_s$ with modulus strictly larger that $\rho_0$ 
this proves $\overline \kappa_s = \kappa_s$.
Besides, from the equation
$P_s\overline r_s  = \kappa_s \overline r_s  $ it follows that 
$\overline r_s$ belongs to the eigenspace $\mathbb C r_s$ associated to $\kappa(s)$,
so there exists $z\in \mathbb C$ such that $\overline r_s = z r_s$.
 Since $\nu(r_s) = 1 =  \nu(\overline r_s)$, 
  we get $z=1$ and hence $\overline r_s=r_s$ as required.

Since $r_0=1$, we can assume that $s_0$ is very small such that $r_s$ is strictly positive for real $s \in (-s_0, s_0)$.
We now prove that $\alpha(s)=\kappa(s)$, for real $s \in (-s_0, s_0)$. 
We put $\psi=r_s$ in \eqref{eq-nucirc001} and use the second identity in \eqref{eq-perturb001}
to obtain
\begin{align} \label{eq-nucirc002}
 \alpha(s) \nu_s (r_s) = \nu_s(P_s r_s)  = \kappa(s) \nu_s(r_s). 
\end{align}
Since $r_s>0$ we have $\nu_s (r_s) > 0$, 
which implies that $\alpha(s)= \kappa(s)$ for real valued $s.$

Iterating \eqref{eq-nucirc001} and using the fact that $\alpha(s)= \kappa(s)$, 
we have that, for any $\varphi \in \mathscr B_\gamma $,
\begin{align} \label{eq-nucirc004}
 \nu_s \left(P_s^n \varphi \right)  =  \kappa(s)^{n} \nu_s (\varphi).
\end{align}
From \eqref{eq-nucirc004} and \eqref{eq-perturb002}, 
taking the limit as $n \to \infty$ we obtain that, 
\begin{align*} 
 \nu_s(\varphi) 
 = \nu_s( \left( \theta_s(\varphi) r_s \right) = \theta_s(\varphi) \nu_s ( r_s ),
\end{align*}
from which it follows that, for any $\varphi \in \mathscr B_\gamma$,
\begin{align*} 
\frac{ \nu_s(\varphi)}{\nu_s ( r_s )} = \theta_s(\varphi).
\end{align*}
This proves that the linear functional $\theta_s$ is indeed a non-negative Borel measure,
and that any non-negative Borel measure which is an eigenmeasure of $P_s$ is proportional to 
$\theta_s$. 
%
%
%
%
\end{proof}

To show the uniqueness of the eigenfunction of the operator $P_s$ we need more notation. 
For any real $|s|<s_0$, let $r_s > 0$ be the function introduced in the proof of Lemma \ref{lemma-uniqueness of eigenmeasure001}.
Introduce the operator $Q_s$ by setting, for any $\varphi\in \mathcal C(\mathbb P^{d-1})$,  
\begin{align} \label{defQs001}
Q_s \varphi = \frac{P_s(r_s \varphi) }{ \kappa(s) r_s }.
\end{align}
Then $Q_s$ is a Markov operator, namely, 
$Q_s\varphi \geq 0$ for $\varphi \geq 0$, and  $ Q_s(1) =1.$

\begin{lemma}
For any $\varphi \in \mathcal C (\mathbb P^{d-1}) $, one has 
\begin{align} \label{eq-convQn001}
\lim_{n\to\infty}\left\|Q_s^n (\varphi) - \frac{\nu_s(\varphi r_s)} {\nu_s(r_s) }\right\|_{\infty} = 0 
\end{align}
and 
\begin{align} \label{eq-convPn001}
\lim_{n\to\infty}\left\| \frac{1}{\kappa_s^n} P_s^n (\varphi) - \frac{\nu_s(\varphi)} {\nu_s(r_s) } r_s \right\|_{\infty} = 0 
\end{align}
\end{lemma}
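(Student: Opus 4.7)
The plan is to first establish an algebraic identity relating iterates of $Q_s$ and $P_s$, then transfer the spectral gap estimate \eqref{eq-perturb002} from the Hölder norm to the uniform norm, and finally extend from Hölder functions to arbitrary continuous functions using the Markov property of $Q_s$.

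\textbf{Step 1 (Iterates of $Q_s$).} A straightforward induction on $n$ from the definition \eqref{defQs001} shows that, for every $\varphi \in \mathcal{C}(\mathbb{P}^{d-1})$ and every $n \geq 1$,
\begin{equation*}
Q_s^n \varphi \;=\; \frac{P_s^n(r_s \varphi)}{\kappa(s)^n\, r_s}.
\end{equation*}
Recall from the proof of Lemma \ref{lemma-uniqueness of eigenmeasure001} that $r_s$ is strictly positive on the compact space $\mathbb{P}^{d-1}$, hence $c_s := \inf_{x} r_s(x) > 0$, and that $\theta_s(\psi) = \nu_s(\psi)/\nu_s(r_s)$ for every $\psi \in \mathscr{B}_\gamma$.

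\textbf{Step 2 (Hölder case).} Fix $\varphi \in \mathscr{B}_\gamma$. Since $r_s \in \mathscr{B}_\gamma$, we have $r_s \varphi \in \mathscr{B}_\gamma$, and \eqref{eq-perturb002} applied to $r_s \varphi$ yields
\begin{equation*}
\bigl\| P_s^n(r_s\varphi) - \kappa(s)^n \theta_s(r_s\varphi)\, r_s \bigr\|_{\infty}
\;\leq\; \bigl\| P_s^n(r_s\varphi) - \kappa(s)^n \theta_s(r_s\varphi)\, r_s \bigr\|_{\mathscr{B}_\gamma}
\;\leq\; C\rho_0^n \|r_s\varphi\|_{\mathscr{B}_\gamma}.
\end{equation*}
Dividing by $\kappa(s)^n r_s$, using Step 1 and $\inf r_s \geq c_s > 0$, and recalling $|\kappa(s)| > \rho_0$, we obtain
\begin{equation*}
\bigl\| Q_s^n \varphi - \theta_s(r_s\varphi) \bigr\|_\infty
\;\leq\; \frac{C\rho_0^n}{c_s |\kappa(s)|^n} \|r_s\varphi\|_{\mathscr{B}_\gamma} \;\xrightarrow[n\to\infty]{} 0,
\end{equation*}
and since $\theta_s(r_s\varphi) = \nu_s(r_s\varphi)/\nu_s(r_s)$, this proves \eqref{eq-convQn001} for Hölder $\varphi$.

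\textbf{Step 3 (Extension to continuous $\varphi$).} The key point is that $Q_s$ is a Markov operator, so $\|Q_s^n \psi\|_\infty \leq \|\psi\|_\infty$ for every $\psi \in \mathcal{C}(\mathbb{P}^{d-1})$ and every $n$; likewise the bounded linear functional $\psi \mapsto \nu_s(r_s \psi)/\nu_s(r_s)$ has norm at most $1$ on $\mathcal{C}(\mathbb{P}^{d-1})$. Given $\varphi \in \mathcal{C}(\mathbb{P}^{d-1})$ and $\varepsilon > 0$, approximate $\varphi$ uniformly within $\varepsilon$ by some $\psi \in \mathscr{B}_\gamma$ (for instance by convolving with a smooth kernel on $\mathbb{P}^{d-1}$, or invoking that Lipschitz functions are dense in $\mathcal{C}(\mathbb{P}^{d-1})$ by Stone--Weierstrass). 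A standard $3\varepsilon$-argument combining Step 2 applied to $\psi$ with the two bounds above yields \eqref{eq-convQn001} for $\varphi$.

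\textbf{Step 4 (From $Q_s$ to $P_s$).} Since $r_s > 0$ and continuous on the compact space $\mathbb{P}^{d-1}$, the function $\varphi/r_s$ belongs to $\mathcal{C}(\mathbb{P}^{d-1})$ whenever $\varphi$ does. Using Step 1 in the form $\frac{1}{\kappa(s)^n} P_s^n \varphi = r_s \cdot Q_s^n(\varphi/r_s)$ and applying \eqref{eq-convQn001} to $\varphi/r_s$, the limit equals $r_s \cdot \nu_s(r_s \cdot \varphi/r_s)/\nu_s(r_s) = r_s \cdot \nu_s(\varphi)/\nu_s(r_s)$, which is precisely \eqref{eq-convPn001}.

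The only non-routine point is Step 3: the perturbation theory of \cite{HH01} delivers geometric convergence only in the Hölder norm, whereas the conclusions are stated in the uniform norm for \emph{all} continuous functions. The Markov character of $Q_s$, which keeps $\|Q_s^n\|_\infty$ bounded uniformly in $n$, is precisely what enables the density argument to upgrade Hölder convergence to continuous convergence.
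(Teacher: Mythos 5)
Your proof is correct and follows essentially the same route as the paper: convergence for H\"older functions via the spectral gap bound \eqref{eq-perturb002} together with the identification $\theta_s(\cdot)=\nu_s(\cdot)/\nu_s(r_s)$, then extension to all of $\mathcal C(\mathbb P^{d-1})$ using the uniform bound $\|Q_s^n\|_\infty\leq 1$ from the Markov property and the density of $\mathscr B_\gamma$, and finally passage between $Q_s^n$ and $\kappa(s)^{-n}P_s^n$ via the conjugation identity. The only difference is cosmetic (you treat \eqref{eq-convQn001} first for H\"older $\varphi$ by applying the gap to $r_s\varphi$, while the paper first states \eqref{eq-convPn001} for H\"older $\varphi$), and your write-up usefully makes the iteration identity $Q_s^n\varphi=P_s^n(r_s\varphi)/(\kappa(s)^n r_s)$ explicit.
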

\begin{proof}
First note that from \eqref{eq-perturb002} we have that \eqref{eq-convPn001} holds for any  $\varphi \in \mathscr B_\gamma $.
This implies that \eqref{eq-convQn001} also holds for any  $\varphi \in \mathscr B_\gamma $.
As $Q_s$ is a Markov Markov, it has the norm $1$ in $\mathcal C (\mathbb P^{d-1})$, so $Q_s^n$ is uniformly
bounded in the space of bounded operators of $\mathcal C (\mathbb P^{d-1})$.
Since $\mathscr B_\gamma$ is dense in $\mathcal C (\mathbb P^{d-1})$,
the convergence \eqref{eq-convQn001} holds for any $\varphi \in \mathcal C (\mathbb P^{d-1})$.
This in turn implies that \eqref{eq-convPn001} also holds for $\varphi \in \mathcal C (\mathbb P^{d-1})$. 
\end{proof}


\begin{lemma}\label{lemma-uniqueness of eifunc001}
Assume conditions \ref{Cond strong-irred}, \ref{Cond proxim} and \ref{Two sided exponential moment}. 
Then, for any $|s|<s_0$, the function $r_s$ is the
unique (up to a scaling constant) non-negative continuous eigenfunction of the operator $P_s$.
 \end{lemma}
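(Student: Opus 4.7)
I would follow the same dual strategy as in the proof of Lemma~\ref{lemma-uniqueness of eigenmeasure001}, now exchanging the roles of measures and functions. Let $\varphi\in \mathcal C(\mathbb P^{d-1})$ be non-negative and non-zero with $P_s\varphi=\lambda\varphi$. First, testing against the eigenmeasure $\nu_s$ gives $\lambda\nu_s(\varphi)=\nu_s(P_s\varphi)=\kappa(s)\nu_s(\varphi)$, so either $\lambda=\kappa(s)$ or $\nu_s(\varphi)=0$. Iterating $P_s^n\varphi = \lambda^n\varphi$ and invoking \eqref{eq-convPn001} yields
\begin{equation*}
\Bigl(\frac{\lambda}{\kappa(s)}\Bigr)^n\varphi \;\xrightarrow[n\to\infty]{\;\|\cdot\|_\infty\;}\; \frac{\nu_s(\varphi)}{\nu_s(r_s)}\,r_s.
\end{equation*}
The case $\lambda>\kappa(s)$ is impossible since the left-hand side would blow up while the right-hand side is bounded, and the case $\lambda=\kappa(s)$ gives $\varphi=(\nu_s(\varphi)/\nu_s(r_s))\,r_s$ directly. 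It remains to exclude the case $\lambda<\kappa(s)$, in which the displayed limit is zero, forcing $\nu_s(\varphi)=0$.

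In this last case I would pass to the Markov operator $Q_s$ via $\psi:=\varphi/r_s\geq 0$, which satisfies $Q_s\psi=\tau\psi$ with $\tau:=\lambda/\kappa(s)\in[0,1)$. At a point $x_*$ realising $\inf\psi$, the Markov property $\int q_s(g,x)\,\mu(dg)=1$ yields $\tau\inf\psi=Q_s\psi(x_*)\geq \inf\psi$, forcing $\inf\psi=0$. At such a zero $x_*$ the identity $Q_s\psi(x_*)=0$ together with non-negativity of the integrand forces $\psi(gx_*)=0$ for every $g\in\mathrm{supp}\,\mu$. Hence $Z:=\{\psi=0\}$ is a non-empty closed $\Gamma_\mu$-invariant subset of $\mathbb P^{d-1}$. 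The goal is to show $Z=\mathbb P^{d-1}$, i.e.\ $\psi\equiv 0$, contradicting $\varphi\not\equiv 0$.

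To reach this contradiction I would work with the complement $U:=\{\psi>0\}$: the forward $\Gamma_\mu$-invariance of $Z$ is equivalent to the relation $g^{-1}(U)\subseteq U$ for every $g\in\mathrm{supp}\,\mu$, hence $U\subseteq GU$ for every $G$ in the semigroup $\Gamma_\mu$. Picking a bi-proximal $G\in\Gamma_\mu$ (whose existence follows from conditions~\ref{Cond strong-irred} and~\ref{Cond proxim}), iterating gives $G^{-n}x\in U$ for all $x\in U$ and all $n\geq 1$; applying the proximal contraction $G^{-n}x\to v_G^-$ valid outside the repelling hyperplane of $G^{-1}$, and letting $G$ range over a rich enough set of bi-proximal elements, should trap $U$ inside a nowhere dense set and force $U=\emptyset$.

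The main obstacle is precisely this last topological/dynamical argument. Example~\ref{Example1} shows that proper closed $\Gamma_\mu$-invariant subsets of $\mathbb P^{d-1}$ can genuinely exist, so the contradiction cannot rest on $\Gamma_\mu$-invariance of $Z$ alone and must quantitatively exploit both the proximal contraction and the strict inequality $\tau<1$. An alternative and perhaps technically lighter route would be to first establish an automatic H\"older regularity for non-negative continuous $P_s$-eigenfunctions --- using the smoothing of $\varphi=\lambda^{-n}P_s^n\varphi$ coming from the near-constancy of $x\mapsto\varphi(G_n x)$ under proximal contraction --- and then to invoke the spectral gap \eqref{eq-perturb002} on $\mathscr B_\gamma$ together with the one-dimensionality of the eigenspace of $\kappa(s)$ established in the proof of Lemma~\ref{lemma-uniqueness of eigenmeasure001}.
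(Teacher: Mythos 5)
Your first paragraph is, in substance, the paper's entire proof: the paper applies \eqref{eq-convPn001} to the eigenfunction, obtains $\lim_{n\to\infty}\bigl\| (\lambda/\kappa(s))^{n}\varphi - \tfrac{\nu_s(\varphi)}{\nu_s(r_s)}r_s \bigr\|_{\infty}=0$, and concludes $\lambda=\kappa(s)$ and $\varphi\propto r_s$. Your explicit case distinction ($\lambda>\kappa(s)$ impossible by blow-up, $\lambda=\kappa(s)$ gives proportionality) is the same argument, and it settles the lemma whenever $\nu_s(\varphi)>0$ --- in particular for strictly positive eigenfunctions, which is all that is needed where the lemma is actually used (Lemma \ref{Lemma-expleinenfun-s-neg}, where the candidate $\phi_s$ is strictly positive; compare also the wording ``positive continuous eigenfunction'' in Lemma \ref{lemma-uniqueness of eigenmeasurestar002}). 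The paper's text stops there, asserting $\lambda=\kappa(s)$ directly from \eqref{eq-convPn003}; the possibility $\nu_s(\varphi)=0$, i.e.\ $\varphi\equiv 0$ on $\supp\nu_s=\Lambda(\Gamma_\mu)$, is exactly what that one-line conclusion passes over, so isolating it is a fair observation.

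The genuine gap in your proposal is that this remaining case $\lambda<\kappa(s)$ is not closed, as you yourself concede. Concretely: (i) forward invariance of $Z=\{\psi=0\}$ (equivalently backward invariance of $U=\{\psi>0\}$) can never force $Z=\mathbb P^{d-1}$ on its own, since $\Lambda(\Gamma_\mu)$ and the quadric of Example \ref{Example1} are proper nonempty closed $\Gamma_\mu$-invariant sets; (ii) the ``trap $U$ inside a nowhere dense set'' step is a hope, not an argument: from $G^{-n}x\in U$ you only learn that $U$ contains certain backward orbits, whose limit points need not lie in the \emph{open} set $U$, so no contradiction is extracted; moreover the existence of bi-proximal elements of $\Gamma_\mu$ does not follow from conditions \ref{Cond strong-irred} and \ref{Cond proxim} (proximality of $\Gamma_\mu$ does not imply proximality of $\Gamma_\mu^{-1}$ when $d\geq 3$), and nothing in the sketch uses $\tau<1$ quantitatively, although you rightly note any valid argument must; (iii) the alternative route (automatic H\"older regularity plus the spectral gap \eqref{eq-perturb002}) also cannot finish, because the spectral gap only isolates $\kappa(s)$ and bounds the remaining spectrum by $\rho_0$; it does not exclude an eigenvalue $\lambda\leq\rho_0$ admitting a non-negative eigenfunction vanishing on $\supp\nu_s$, which is precisely the configuration to be ruled out. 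So, read with ``non-negative'' taken literally, the proposal is incomplete; either restrict the statement to eigenfunctions with $\nu_s(\varphi)>0$ (which is how the paper's argument operates and what its applications require) or supply a genuinely new argument for the degenerate case.
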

\begin{proof}
If we take $\varphi $ to be non-negative
non-zero continuous eigenfunction of $P_s$ 
associated to some eigenvalue $\lambda$,
then by \eqref{eq-convPn001} we get
\begin{align} \label{eq-convPn003}
\lim_{n\to\infty}\left\| \frac{1}{\kappa_s^n} \lambda^n \varphi - \frac{\nu_s(\varphi)} {\nu_s(r_s) } r_s \right\|_{\infty} = 0. 
\end{align}
It follows that $\lambda=\kappa(s)$ and
$\varphi = \frac{\nu_s(\varphi)} {\nu_s(r_s) } r_s,$
which means that the function $\varphi$
coincides (up to a scaling constant) with the eigenfunction $r_s$.
\end{proof}

Applying the previous theory to the operator $P_s^*$ and to the adjoint 
projective space $(\mathbb P^{d-1})^*$ we obtain the following:
\begin{lemma} \label{lemma-uniqueness of eigenmeasurestar002}
Assume conditions \ref{Cond strong-irred}, \ref{Cond proxim} and \ref{Two sided exponential moment}. 
Then, there exists a positive constant $s_0$ 
such that, for any real $|s|<s_0$ 
the operator $P_s^*$ has a
unique probability Borel eigenmeasure $\nu_s^*$
and unique (up to a scaling constant) positive continuous eigenfunction $r_s^*$
associated with the same unique eigenvalue $\kappa^*(s)$. 
 \end{lemma}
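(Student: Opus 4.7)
The plan is to reduce this statement to the two preceding lemmas, Lemma \ref{lemma-uniqueness of eigenmeasure001} and Lemma \ref{lemma-uniqueness of eifunc001}, by observing that $P_s^*$ is itself a transfer operator of exactly the same form, but attached to the image measure $\check\mu$ on $\mathbb G$ obtained as the pushforward of $\mu$ under the antihomomorphism $g\mapsto g^*$ (which exchanges the action of $\mathbb G$ on $\mathbb R^d$ with its action on $(\mathbb R^d)^*$). Under the canonical identification of the dual $((\mathbb R^d)^*)^*$ with $\mathbb R^d$, the operator $P_s^*$ coincides with the transfer operator of $\check\mu$ on the projective space of $(\mathbb R^d)^*$, and the cocycle $\sigma(g^*,y)$ plays for $\check\mu$ the role that $\sigma(g,x)$ plays for $\mu$.

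The first step is therefore to check that the hypotheses \ref{Cond strong-irred}, \ref{Cond proxim} and \ref{Two sided exponential moment} are stable under $g\mapsto g^*$, so that they hold for $\check\mu$. Strong irreducibility transfers by the duality between subspaces of $\mathbb R^d$ and $(\mathbb R^d)^*$ via annihilators: a finite $\Gamma_{\check\mu}$-invariant union of proper subspaces of $(\mathbb R^d)^*$ would, by annihilation, produce a finite $\Gamma_\mu$-invariant union of proper subspaces of $\mathbb R^d$, contradicting \ref{Cond strong-irred}. Proximality transfers because $g$ and $g^*$ have the same eigenvalues (with the same algebraic multiplicities), so $g^*$ is proximal whenever $g$ is. Finally, the two-sided exponential moment \ref{Two sided exponential moment} is preserved since $\|g^*\|=\|g\|$ and $\|(g^*)^{-1}\|=\|g^{-1}\|$, hence $N(g^*)=N(g)$.

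Once this is in place, I would apply Lemma \ref{lemma-uniqueness of eigenmeasure001} with $\mu$ replaced by $\check\mu$ and $\mathbb P^{d-1}$ replaced by $(\mathbb P^{d-1})^*$; the transfer operator appearing there is exactly $P_s^*$ (acting now on the Banach space $\mathscr B_\gamma^*$). This immediately yields, for all real $s$ with $|s|<s_0$ (possibly after shrinking $s_0$), a unique probability Borel eigenmeasure $\nu_s^*$ of $P_s^*$, together with a simple dominant eigenvalue $\kappa^*(s)$ obtained via the Le Page spectral gap and the perturbation theorem \cite[Theorem III.8]{HH01}. Applying Lemma \ref{lemma-uniqueness of eifunc001} in the same dual setting produces, for the same $s$, a positive continuous eigenfunction $r_s^*$ of $P_s^*$ (unique up to a positive scalar) whose associated eigenvalue must coincide with $\kappa^*(s)$, by the very same argument using \eqref{eq-convPn001} applied to $P_s^*$.

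The only non-mechanical point is the verification that the three hypotheses transfer to $\check\mu$, but this is classical and uses nothing beyond the duality between $\mathbb R^d$ and $(\mathbb R^d)^*$; all the spectral gap and perturbation-theoretic inputs are then identical to those used for $P_s$, so no new analytic work is required. Thus the statement follows by reading the proofs of Lemmas \ref{lemma-uniqueness of eigenmeasure001} and \ref{lemma-uniqueness of eifunc001} verbatim in the dual setting, which produces the same $s_0>0$ (after taking the minimum of the two constants thus obtained).
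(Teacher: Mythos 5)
Your proposal is correct and is essentially the paper's own argument: the paper proves this lemma precisely by ``applying the previous theory to the operator $P_s^*$ and to the adjoint projective space $(\mathbb P^{d-1})^*$'', i.e.\ by reading Lemmas \ref{lemma-uniqueness of eigenmeasure001} and \ref{lemma-uniqueness of eifunc001} in the dual setting. Your explicit verification that conditions \ref{Cond strong-irred}, \ref{Cond proxim} and \ref{Two sided exponential moment} transfer under $g\mapsto g^*$ (via annihilators, equality of spectra, and $N(g^*)=N(g)$) just fills in details the paper leaves implicit (cf.\ Remark \ref{rem to lemma-limitset001}).
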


We will show in Lemma  \ref{Lemma-expleinenfun-s-neg}   that $\kappa^*(s)= \kappa(s)$.

\subsection{The H\"{o}lder regularity of the stationary measure}
In this section we establish the H\"{o}lder regularity 
of the stationary measure $\nu_s$ defined in Lemma \ref{lemma-uniqueness of eigenmeasure001}. 



Note that $\nu_0$ coincides with the stationary measure $\nu$ defined by \eqref{mu station meas}. 
The H\"{o}lder regularity of 
the stationary measure $\nu$ has been established in \cite{GR85} (see also \cite{BL85, Gui90, BQ16b}): 
under conditions \ref{Cond strong-irred}, \ref{Cond proxim} and \ref{Two sided exponential moment}, 
there exists a constant $\alpha > 0$ such that
\begin{align} \label{RegularInver}
\sup_{y \in (\bb{P}^{d-1})^* } \int_{\bb{P}^{d-1} } \frac{1}{ \delta(y, x)^{\alpha} } \nu(dx)
< +\infty. 
\end{align}
By the Frostman lemma (see \cite{Mattila2015}), the assertion \eqref{RegularInver} implies that 
the Hausdorff dimension of the stationary measure $\nu$ is at least $\alpha$. 
As mentioned before, \eqref{RegularInver} plays a crucial role 
for establishing limit theorems such as the law of large numbers and the central limit theorem
for the coefficients $\langle f, G_n v \rangle$ (see \cite{GR85, BL85, Gui90, BQ16b}).
In the following we establish the H\"{o}lder regularity of the stationary measure $\nu_s$
when $s$ is in a small neighborhood of $0$.
The proof is based on \eqref{RegularInver} and the spectral gap properties of the transfer operator $P_s$
established in subsection \ref{sec-exist eigenmes}.  

\begin{proposition}\label{Thm-Regu-s}
Assume conditions \ref{Cond strong-irred}, \ref{Cond proxim} and \ref{Two sided exponential moment}. 
Then, there exist constants $s_0, \alpha > 0$ such that
\begin{align}\label{ReguInverS01}
\sup_{ s\in (-s_0, s_0) } \sup_{y \in (\bb{P}^{d-1})^* } 
  \int_{\bb{P}^{d-1} } \frac{1}{ \delta(y, x)^{\alpha} } \nu_s(dx)
 < +\infty. 
\end{align}
In particular, there exist constants $\alpha, C>0$ such that for any $0< t <1$,
we have
\begin{align}\label{ReguInverS02}
\sup_{ s \in (-s_0, s_0) } \sup_{y \in (\bb{P}^{d-1})^* }
\nu_s  \left( \left\{ x \in \bb{P}^{d-1}:  \delta(y, x)  \leq t  \right\}  \right) \leq C  t^{\alpha}.
\end{align}
\end{proposition}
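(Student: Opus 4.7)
The plan is to bootstrap the known regularity \eqref{RegularInver} for $\nu = \nu_0$ to the perturbed measures $\nu_s$, exploiting the spectral perturbation of the transfer operator from Lemma \ref{lemma-uniqueness of eigenmeasure001}. First, I observe that the two bounds \eqref{ReguInverS01} and \eqref{ReguInverS02} are essentially equivalent: \eqref{ReguInverS02} follows from \eqref{ReguInverS01} by Markov's inequality, while \eqref{ReguInverS01} (with a slightly smaller exponent $\alpha' < \alpha$) follows from \eqref{ReguInverS02} via the layer-cake formula
\[
\int \delta(y,x)^{-\alpha'}\nu_s(dx) = \int_0^\infty \nu_s\bigl\{x : \delta(y,x) \leq u^{-1/\alpha'}\bigr\}\,du.
\]
It therefore suffices to prove \eqref{ReguInverS01}.

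Next, I would pass to the Markov operator $Q_s$ defined in \eqref{defQs001}, whose unique invariant probability measure is $\pi_s = r_s\nu_s/\nu_s(r_s)$. Since $r_0 \equiv 1$ and $s \mapsto r_s \in \mathscr{B}_\gamma$ is holomorphic near $0$, both $r_s$ and $1/r_s$ are uniformly bounded for $|s| < s_0$; hence $\pi_s$ and $\nu_s$ have uniformly bounded mutual Radon--Nikodym derivatives, and the uniform bound \eqref{ReguInverS01} for $\pi_s$ implies the same for $\nu_s$. Moreover the spectral gap \eqref{eq-perturb002} of $P_s$ transfers by conjugation $Q_s = r_s^{-1}\kappa(s)^{-1} P_s(r_s\,\cdot)$ into a uniform spectral gap for $Q_s$ on $\mathscr{B}_\gamma$, with dominant eigenvalue $1$ and associated eigenspace spanned by the constant function. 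For the truncated target $V_M(x;y) = \min(\delta(y,x)^{-\alpha}, M)$, the invariance $\pi_s(V_M(\cdot;y)) = \pi_s(Q_s^n V_M(\cdot;y))$ combined with this spectral gap gives
\[
\bigl\| Q_s^n V_M(\cdot;y) - \pi_s(V_M(\cdot;y))\bigr\|_\infty \leq C \rho^n \|V_M(\cdot;y)\|_{\mathscr{B}_\gamma},
\]
uniformly in $|s| < s_0$. At $s=0$ one has $\pi_0(V_M(\cdot;y)) = \nu(V_M(\cdot;y)) \leq C_0$ uniformly in $y$ and $M$ by \eqref{RegularInver}; a perturbation estimate $\|Q_s - Q_0\| \to 0$ on $\mathscr{B}_\gamma$ should then transfer this to $\pi_s(V_M(\cdot;y)) \leq C_0 + 1$ uniformly, and letting $M \to \infty$ by monotone convergence yields the required bound on $\pi_s(\delta(y,\cdot)^{-\alpha})$.

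The main obstacle is that the H\"older seminorm $\|V_M(\cdot;y)\|_{\mathscr{B}_\gamma}$ grows with $M$, so the iteration index $n$ needed for the spectral-gap estimate to dominate depends on $M$, which weakens the uniformity of the final bound. To close the argument, I would decompose $Q_s^n V_M(\cdot;y)$ into a bulk contribution over the event $\{\delta(y, G_n x) \geq \varepsilon_n\}$, where the integrand is bounded by $\varepsilon_n^{-\alpha}$, and a tail contribution over the complementary event whose probability decays exponentially in $n$, by an $s$-uniform version of \eqref{Regularity_Ineq 001} adapted to the Doob-transformed walk governed by $Q_s$ (which inherits the strong irreducibility and proximality from $\mu$ for $s$ small). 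Choosing $\varepsilon_n = e^{-an}$ for some small $a > 0$ and letting $n$ and $M$ grow in a suitable coordinated manner then removes the $M$-dependence and produces the desired uniform bound.
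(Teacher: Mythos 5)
There is a genuine gap, and it sits exactly where the real work of the proposition lies. Your closing step invokes ``an $s$-uniform version of \eqref{Regularity_Ineq 001} adapted to the Doob-transformed walk governed by $Q_s$'', justified only by the remark that the transformed walk inherits strong irreducibility and proximality. But the Doob transform does not produce a random walk driven by i.i.d.\ matrices with a fixed law: the weights $q_n^s(x,g)$ depend on the starting point and on the whole product, so Lemma \ref{Lem_Regularity_nus} (Benoist--Quint) cannot be applied to $\mathbb Q_s^x$ as if it were of the form $\tilde\mu^{\otimes n}$, and nothing in your sketch produces the required uniformity in $s\in(-s_0,s_0)$ and in $x,y$. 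This assumed estimate is precisely the content of Step 1 of the paper's proof (inequality \eqref{Ch7RegularStep1}): there it is derived by going back to the original measure $\mathbb P$ via the change-of-measure formula \eqref{Change of measure}, the uniform two-sided bounds on $r_s$, the Cauchy--Schwarz inequality, condition \ref{Two sided exponential moment} (which bounds $\mathbb E\, e^{2s\sigma(G_n,x)}$ by $(\mathbb E\, N(g_1)^{2|s|})^n$), the continuity of $\kappa$ at $0$ (to absorb $\kappa(s)^{-n}$), and Lemma \ref{Lem_Regularity_nus} applied under $\mathbb P$. Without reproducing some such comparison to $\mathbb P$, your tail bound is unsupported, and it is the crux of the statement.

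Moreover, the first half of your argument does not help and has its own quantitative flaw: both the spectral-gap error and the perturbation transfer from $\pi_0(V_M)\leq C_0$ to $\pi_s(V_M)\leq C_0+1$ carry a factor $\|V_M(\cdot;y)\|_{\mathscr B_\gamma}$, which blows up as $M\to\infty$, so no fixed $s_0$ gives a bound uniform in $M$ and $y$; you flag the first issue, but the second is equally fatal as stated. And once the uniform tail bound under $\mathbb Q_s^x$ is available, the truncation $V_M$, the spectral gap of $Q_s$, and the perturbation from $s=0$ are all unnecessary: stationarity, $\pi_s=(\mathbb Q_s^x)^{*n}*\pi_s$, immediately gives $\pi_s(\{\delta(y,\cdot)\leq e^{-\ee n}\})\leq e^{-c_1 n}$, and summing over the annuli $\{e^{-\ee(n+1)}\leq\delta(y,\cdot)\leq e^{-\ee n}\}$ with $\alpha<c_1/\ee$ yields \eqref{ReguInverS01} for $\pi_s$, hence for $\nu_s$ by the two-sided boundedness of $r_s$; this is the paper's Step 2 and coincides with your intended endgame. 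So the route is salvageable, but only after you actually prove the uniform regularity of the changed measure, which your proposal leaves as an assumption.
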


One implication of the Proposition \ref{Thm-Regu-s} is that the eigenmeasure $\nu_s$
does not charge the projective hyperplanes. Precise formulation follows:
\begin{corollary} \label{corrol-egenmes nus do not charge hyperplanes001}
Assume conditions \ref{Cond strong-irred}, \ref{Cond proxim} and \ref{Two sided exponential moment}.
Then there exists a constant $s_0>0$ such that for any $s \in (-s_0, s_0)$
and any projective hyperplane $Y$ of $\mathbb P^{d-1}$ it holds $\nu_s(Y)=0$.  
\end{corollary}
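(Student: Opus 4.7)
The plan is to derive this corollary directly from the uniform Hölder regularity estimate \eqref{ReguInverS02} established in Proposition \ref{Thm-Regu-s}. The constant $s_0 > 0$ will be the same as the one produced there.

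First, I would observe that any projective hyperplane $Y \subset \mathbb P^{d-1}$ arises from a nonzero linear functional: there exists $f \in (\mathbb R^d)^*$ with $|f|=1$ such that
\[
Y = \{x = \mathbb R v \in \mathbb P^{d-1} : \langle f, v\rangle = 0\}.
\]
Setting $y = \mathbb R f \in (\mathbb P^{d-1})^*$, this is precisely the zero locus $\{x \in \mathbb P^{d-1} : \delta(y,x) = 0\}$, using the definition $\delta(y,x) = |\langle f,v\rangle|/(|f||v|)$.

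Next, I would write $Y$ as the countable decreasing intersection
\[
Y = \bigcap_{n \geq 1} A_n, \qquad A_n := \{x \in \mathbb P^{d-1} : \delta(y,x) \leq 1/n\}.
\]
Since $\nu_s$ is a probability measure and the sets $A_n$ are Borel (in fact closed, as $\delta(y,\cdot)$ is continuous on $\mathbb P^{d-1}$) and nested decreasingly, continuity from above gives $\nu_s(Y) = \lim_{n \to \infty} \nu_s(A_n)$.

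Finally, I would invoke \eqref{ReguInverS02}: there exist constants $\alpha, C > 0$ and $s_0 > 0$ (independent of $s$ and $y$) such that for all $s \in (-s_0, s_0)$,
\[
\nu_s(A_n) \leq C\, (1/n)^{\alpha} \longrightarrow 0 \quad \text{as } n \to \infty.
\]
Passing to the limit yields $\nu_s(Y) = 0$, which is the desired conclusion. There is no real obstacle here; the genuine work has already been done in Proposition \ref{Thm-Regu-s}, and the corollary is essentially the statement that a quantitative Hölder bound near the hyperplane trivially upgrades the qualitative non-charging property. The only minor points to verify are that $\delta(y,\cdot)$ is continuous (so that $A_n$ is measurable) and that the bound in \eqref{ReguInverS02} is uniform in $y$, both of which are explicitly included in Proposition \ref{Thm-Regu-s}.
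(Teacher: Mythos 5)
Your proof is correct and follows exactly the route the paper intends: the paper states this corollary as an immediate consequence of Proposition \ref{Thm-Regu-s} (without writing out the details), and your argument—identifying the hyperplane with $\{x:\delta(y,x)=0\}$ and letting $t\to 0$ in the uniform bound \eqref{ReguInverS02} via continuity from above—is precisely that implication, with the same $s_0$.
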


Before proceeding to proving Proposition \ref{Thm-Regu-s}, let us first recall 
a change of measure formula which will be used in the proof of Proposition \ref{Thm-Regu-s}.  
For any $s \in (-s_0, s_0)$, the family of probability kernels  
$q_{n}^{s}(x,g) = \frac{ e^{s \sigma(g, x) } }{\kappa^{n}(s)}\frac{r_{s}(g x)}{r_{s}(x)},$
$n\geq 1$, satisfies the cocycle property. 
Thus, the probability measures  
$q_{n}^{s}(x,g_{n}\dots g_{1})\mu(dg_1)\dots\mu(dg_n)$ form a projective system 
on $\bb G^{\mathbb{N}}$. 
By the Kolmogorov extension theorem,
there exists a unique probability measure  $\mathbb Q_s^x$ on $\bb G^{\mathbb{N}}$.  
We denote by $\mathbb{E}_{\mathbb Q_s^x}$ the corresponding expectation.
For any measurable function $\varphi$ on $(\mathbb{P}^{d-1} \times \mathbb R)^{n}$, 
it holds that 
\begin{align}\label{Change of measure}
 \frac{1}{ \kappa^{n}(s) r_{s}(x) }
\mathbb{E} \Big[  r_{s}(G_n x) & e^{ s \sigma(G_n, x) }  
\varphi \Big( G_1 x, \sigma(G_1, x), \dots, G_n x, \sigma(G_n, x) \Big) \Big]   \nonumber\\
& \quad 
= \mathbb{E}_{\mathbb{Q}_{s}^{x}} 
\Big[ \varphi \Big( G_1 x, \sigma(G_1, x),\dots, G_n x, \sigma(G_n, x)  \Big) \Big].
\end{align}
Under the changed measure  $\mathbb Q_s^x$, the Markov chain $(G_n x)$ has a unique stationary measure $\pi_s$ defined by 
$\pi_s(\varphi)=\frac{\nu_s(\varphi r_s)}{\nu_s(r_s)}$, for any $\varphi \in \mathcal C(\mathbb P^{d-1})$.

We shall use the following result which has been established in \cite[Lemma 14.11]{BQ16b}. 

\begin{lemma}\label{Lem_Regularity_nus}
Assume conditions \ref{Cond strong-irred}, \ref{Cond proxim}, \ref{Two sided exponential moment}. 
Then, for any $\ee > 0$, 
there exist constants $c_0 > 0$ and $n_0 \geq 1$
such that for all $n \geq k \geq n_0$, $x \in \mathbb P^{d-1}$ and $y \in (\mathbb P^{d-1})^*$, 
\begin{align*} 
\mathbb{P} \Big( \delta (y, G_n x) \leq e^{- \ee k}  \Big) \leq e^{- c_0 k}.
\end{align*}
\end{lemma}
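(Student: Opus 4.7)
The plan is a decoupling-plus-regularity argument, splitting the walk at its last $k$ steps. Write $G_n = M\cdot H$ with $M = g_n\cdots g_{n-k+1}$ and $H = g_{n-k}\cdots g_1$ (interpreting $H=\mathbf e$ when $n=k$); these are independent. From the definition \eqref{cocycle-001} and the identity $\langle f, Mw\rangle = \langle M^*f, w\rangle$ one gets the exact factorization
\[
\delta(y, G_n x) = \delta(M^* y,\, Hx)\,\exp\bigl(\sigma(M^*, y) - \sigma(M, Hx)\bigr).
\]
Both $\sigma(M^*, y)$ and $\sigma(M, Hx)$ are norm cocycles of a product of $k$ i.i.d.\ matrices starting respectively from a deterministic point $y$ and from the point $Hx$ independent of $M$. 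The spectral-gap decomposition \eqref{eq-perturb002} (and its analogue for $P_0^*$) combined with the standard Cram\'er/Chernoff argument yields uniform large deviation bounds: for any $\varepsilon>0$ there exist $c_1, n_0>0$ such that for all $k\geq n_0$,
\[
\sup_{x,y}\mathbb P\bigl(|\sigma(M^*,y)-k\lambda|\geq \tfrac{\varepsilon}{4}k\bigr)+\sup_{x,y}\mathbb P\bigl(|\sigma(M,Hx)-k\lambda|\geq \tfrac{\varepsilon}{4}k\bigr)\leq e^{-c_1 k}.
\]
On the complementary good event, the exponential factor above is at least $e^{-\varepsilon k/2}$, so $\{\delta(y, G_n x)\leq e^{-\varepsilon k}\}$ forces $\delta(M^* y, Hx)\leq e^{-\varepsilon k/2}$.

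Conditioning on $H$ and using its independence from $M$, it then suffices to establish
\[
\sup_{y,z}\mathbb P\bigl(\delta(M^* y, z)\leq \tau\bigr) = \sup_{y,z}(P_0^*)^k \mathbf 1_{B_\tau(z)}(y)\leq e^{-c_2 k}, \qquad \tau = e^{-\varepsilon k/2},
\]
where $B_\tau(z)=\{y'\in(\mathbb P^{d-1})^*:\delta(y',z)\leq\tau\}$. The strategy is to mollify the indicator: choose a H\"older cutoff $\tilde\varphi$ with $\mathbf 1_{B_\tau}\leq \tilde\varphi\leq \mathbf 1_{B_{2\tau}}$ and $\|\tilde\varphi\|_{\mathscr B_\gamma^*}=O(\tau^{-\gamma})$. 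The dual spectral gap (Lemma \ref{lemma-uniqueness of eigenmeasurestar002} together with the analogue of \eqref{eq-perturb002}) and the dual version of the H\"older regularity \eqref{RegularInver} of $\nu^*$ then give
\[
(P_0^*)^k\tilde\varphi(y)\leq \nu^*(\tilde\varphi)+C\rho^k\|\tilde\varphi\|_{\mathscr B_\gamma^*}\leq C(2\tau)^{\alpha}+C'\rho^k\tau^{-\gamma}.
\]
Plugging in $\tau=e^{-\varepsilon k/2}$ yields an upper bound of the form $e^{-\alpha\varepsilon k/2}+e^{-(|\log\rho|-\gamma\varepsilon/2)k}$. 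Since the statement is monotone in $\varepsilon$ (smaller $\varepsilon$ gives a larger probability), one may assume from the start that $\varepsilon<2|\log\rho|/\gamma$; both exponents are then strictly positive, and one takes any $c_0\in\bigl(0,\min\bigl(c_1,\tfrac{\alpha\varepsilon}{2},|\log\rho|-\tfrac{\gamma\varepsilon}{2}\bigr)\bigr)$.

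The delicate point is the last step: the spectral gap of $P_0^*$ lives on the H\"older space $\mathscr B_\gamma^*$, whereas one needs to iterate $P_0^*$ on the indicator of a tiny ball, whose H\"older norm diverges like $\tau^{-\gamma}$. The whole argument hinges on the quantitative trade-off between the mixing rate $\rho^k$ and the mollification cost $\tau^{-\gamma}$, which is only favorable once $\varepsilon$ is small relative to $2|\log\rho|/\gamma$; the reduction to small $\varepsilon$ is harmless by the aforementioned monotonicity.
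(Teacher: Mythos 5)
Your proof is essentially correct, but note that the paper does not prove this lemma at all: it is quoted from Benoist and Quint \cite[Lemma 14.11]{BQ16b}, so the comparison is with their argument rather than with anything internal to the paper. Your first step parallels theirs: split off the last $k$ factors $M=g_n\cdots g_{n-k+1}$, observe that your exact factorization is nothing but the cohomological identity \eqref{cohomological-001} applied with $g=M$ at the point $Hx$, and use uniform two-sided large deviation bounds for the norm cocycles of the walk and of the adjoint walk (whose top Lyapunov exponents coincide, so the centerings cancel -- worth one explicit sentence) to reduce the problem to bounding $\sup_{y,z}\mathbb P\bigl(\delta(M^*y,z)\le e^{-\varepsilon k/2}\bigr)$, which involves a length-$k$ adjoint walk only and is therefore uniform in $n\ge k$. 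Where you genuinely diverge is in this last estimate: Benoist and Quint derive it from their large deviation estimates for the walk itself, whereas you mollify the indicator of the $\tau$-neighbourhood of the hyperplane (the Lipschitz bound $|\delta(y,z)-\delta(y',z)|\le \sqrt 2\,\mathbf d(y,y')$ justifies $\|\tilde\varphi\|_{\mathscr B_\gamma^*}=O(\tau^{-\gamma})$) and then play the spectral gap of $P_0^*$ on $\mathscr B_\gamma^*$, i.e.\ the $s=0$ analogue of \eqref{eq-perturb002} for the adjoint walk, against the H\"older regularity of $\nu^*$, obtained by applying \eqref{RegularInver} to the pushforward of $\mu$ under $g\mapsto g^*$, which again satisfies \ref{Cond strong-irred}--\ref{Two sided exponential moment}. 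The trade-off $C\tau^\alpha+C'\rho^k\tau^{-\gamma}$ with $\tau=e^{-\varepsilon k/2}$ is handled correctly, and your reduction to $\varepsilon<2|\log\rho|/\gamma$ is legitimate since decreasing $\varepsilon$ only enlarges the event in question.

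What each route buys: yours is a clean equidistribution-versus-regularity argument that stays entirely within the spectral-gap framework the paper already sets up, but it consumes Guivarc'h-type regularity of the stationary measure as an external input; this causes no circularity here, because \eqref{RegularInver} is quoted in the paper from \cite{GR85,Gui90,BL85} independently of the present lemma, and you do not use Proposition \ref{Thm-Regu-s}, which is itself deduced from this lemma. Two points deserve explicit justification in a full write-up: the uniform two-sided large deviation bound for the norm cocycle (cite Le Page \cite{LeP82} or \cite{BQ16b}, or run the Chernoff argument via $P_s^k\mathbf 1\le C\kappa(s)^k$ for small real $|s|$ together with $\kappa(0)=1$, $\kappa'(0)=\lambda$, where the lower tail genuinely needs condition \ref{Two sided exponential moment}); and the choice of $n_0$ absorbing the constants so that the sum of the three exponentially small terms is at most $e^{-c_0k}$.
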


\begin{proof}[Proof of Proposition \ref{Thm-Regu-s}]
\textit{Step 1.} We choose a small enough constant $s_0 > 0$ and we show that 
for any $\ee>0$, there exist $c_1 >0$ and $n_0 \geq 1$ such that
for $n \geq n_0$, 
\begin{align} \label{Ch7RegularStep1}
\sup_{ s \in (-s_0, s_0) }  \sup_{ y \in (\bb{P}^{d-1})^* }  \sup_{ x \in \bb{P}^{d-1} }   
\bb Q_s^x \Big( \delta(y, G_n x) \leq e^{-\ee n} \Big)
\leq e^{-c_1 n}.
\end{align}
To prove this, using \eqref{Change of measure}
and the fact that the eigenfunction $x \mapsto r_s(x)$ is strictly positive and bounded on $\bb{P}^{d-1}$,
uniformly with respect to $s \in (-s_0, s_0)$, 
we get
\begin{align*}
\bb Q_s^x \Big( \delta(y, G_n x) \leq e^{-\ee n} \Big)
& =  \frac{1}{\kappa^{n}(s) r_{s}(x)}  \bb{E}  \left[  e^{s \sigma(G_n, x)}  r_{s}(G_n x)
 \bbm{1}_{ \{ \delta(y, G_n x) \leq e^{-\ee n} \}}  \right]  \nonumber\\
& \leq  \frac{c}{{\kappa^{n}(s)}}  \bb{E} \left[  e^{s \sigma(G_n,x)} 
  \bbm{1}_{ \{ \delta(y, G_n x) \leq e^{-\ee n} \}}  \right].
\end{align*}
By H\"{o}lder's inequality, it follows that 
\begin{align} \label{Ch7Step1Holder 01}
\bb Q_s^x \Big( \delta(y, G_n x) \leq e^{-\ee n} \Big)
\leq \frac{c}{{\kappa^{n}(s)}} \left[ \bb{E} e^{2s \sigma(G_n, x) } \right]^{1/2}
\Big[ \bb{P} \big( \delta(y, G_n x) \leq e^{-\ee n} \big) \Big]^{1/2}. 
\end{align}
It is easy to see that $\bb{E} e^{2s \sigma(G_n, x) } \leq \Big\{ \mathbb{E} \big[ N(g_1)^{2|s|} \big] \Big\}^n.$
Since $\kappa(0) =1$ and the function $\kappa$ is continuous in a small neighborhood of $0$, 
we can choose a sufficiently small constant $s_0 >0$ such that 
\begin{align*}
 \sup_{ s \in (-s_0, s_0) }  \sup_{ x \in \mathbb{P}^{d-1} } 
\frac{1}{{\kappa^{n}(s)}} \left[ \bb{E} e^{2s \sigma(G_n, x)} \right]^{1/2}
\leq  e^{c_2 n}, 
\end{align*}
where $c_2 > 0$ is a constant satisfying $c_2 < c_0/4$ with $c_0$ given in Lemma \ref{Lem_Regularity_nus}. 
This, together with \eqref{Ch7Step1Holder 01} and Lemma \ref{Lem_Regularity_nus}, 
concludes the proof of \eqref{Ch7RegularStep1} with $c_1 = c_0 /4$.

\textit{Step 2.} 
From the definition of $\bb{Q}_s^x$, 
one can verify that for any $x \in \bb{P}^{d-1}$ and $n \geq 1$,  $\pi_s = (\bb{Q}_s^x)^{*n} * \pi_s$,
where $*$ denotes the convolution of two measures. 
Combining this with \eqref{Ch7RegularStep1}, we get that, 
uniformly in $s \in (-s_0, s_0)$ and $y \in (\bb{P}^{d-1})^*$, 
\begin{align} \label{Ch7RegularStep2 a}
\pi_s  \left( \left\{ x \in \bb{P}^{d-1}:  \delta(y, x)  \leq  e^{-\ee n}  \right\}  \right) 
= \int_{\bb{P}^{d-1}} (\bb{Q}_s^x)^{*n} 
\big( \delta(y, G_n x) \leq e^{-\ee n} \big) \pi_s(dx) 
\leq e^{-c_1 n}. 
\end{align}
We denote $B_{n}:= \{x \in \bb{P}^{d-1}: e^{-\ee (n+1)} \leq  \delta(y, x) \leq e^{-\ee n} \}$. 
Choosing $\alpha \in (0, c_1/\ee)$, 
we deduce from \eqref{Ch7RegularStep2 a} that, uniformly in $s \in (-s_0, s_0)$ and $y \in (\bb{P}^{d-1})^*$, 
\begin{align*}
&  \int_{ \bb{P}^{d-1} }  \frac{1}{ \delta(y, x)^{\alpha} } \pi_s(dx) \nonumber\\
& =  \int_{\{x \in \bb{P}^{d-1}: \delta(y, x)^{\alpha} > e^{-\ee n_0} \}}
\frac{1}{ \delta(y, x)^{\alpha} } \pi_s(dx)  
 + \sum_{n=n_0}^{\infty} \int_{B_{n}} \frac{1}{ \delta(y, x)^{\alpha} } \pi_s(dx) \nonumber\\
& \leq    e^{ \ee n_0 \alpha } 
+ \sum_{n=n_0}^{\infty} e^{\ee \alpha (n+1) } e^{ -c_1 n}
< +\infty. 
\end{align*}
This proves \eqref{ReguInverS01} by the relation $\pi_s(\varphi) = \frac{\nu_s(\varphi r_s)}{\nu_s(r_s)}$, 
for any $\varphi \in \mathcal C(\mathbb P^{d-1})$. 
The inequality \eqref{ReguInverS02} is a direct consequence of \eqref{ReguInverS01} by the Markov inequality. 
\end{proof}

Let $s_0$ be small enough. For any real $s$ such that $|s|< s_0$,
$y\in (\mathbb P^{d-1})^*$ and bounded measurable function $\varphi$ on $\mathbb P^{d-1}$ denote 
\begin{align*} 
\nu_s^y(\varphi) 
= \int_{\mathbb P^{d-1}} \varphi(x) \delta(x,y)^s   \nu_s(dx). 
\end{align*}

\begin{corollary} \label{Corol-TVcontin001}
There exists $s_0>0$ such that for any $s\in (-s_0,0)$,
the mapping $y\in (\mathbb P^{d-1} )^* \mapsto \nu_s^y \in (\mathcal C(\mathbb P^{d-1} ))'$ is continuous for the 
total variation norm on $(\mathcal C(\mathbb P^{d-1}) )'$.
\end{corollary}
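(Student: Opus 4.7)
The plan is to estimate the total variation norm
\[
\| \nu_s^y - \nu_s^{y'} \|_{\mathrm{TV}} = \int_{\mathbb P^{d-1}} \bigl| \delta(x,y)^s - \delta(x,y')^s \bigr|\, \nu_s(dx)
\]
and show it tends to $0$ as $y' \to y$. Since $s<0$, the integrand blows up near the projective hyperplanes $\{x: \delta(x,y)=0\}$ and $\{x:\delta(x,y')=0\}$, so the core issue is to control the contribution of small values of $\delta$. This will be done using the H\"older regularity of $\nu_s$ supplied by Proposition \ref{Thm-Regu-s}. Throughout I take $s_0$ small enough that $|s|<\alpha$, where $\alpha$ is the exponent of Proposition \ref{Thm-Regu-s}.

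First I would fix $\ee>0$ and split $\mathbb P^{d-1}$ as $A_\ee(y)\cup A_\ee(y') \cup B_\ee(y,y')$, where $A_\ee(y)=\{x:\delta(x,y)\le \ee\}$, $A_\ee(y')=\{x:\delta(x,y')\le \ee\}$, and $B_\ee(y,y')$ is the complement. On $A_\ee(y)$, I would bound $\int_{A_\ee(y)}\delta(x,y)^s\nu_s(dx)$ by a dyadic decomposition: writing $A_\ee(y)=\bigcup_{k\ge 0}\{\ee 2^{-k-1}<\delta(x,y)\le \ee 2^{-k}\}$ and applying \eqref{ReguInverS02} gives
\[
\int_{A_\ee(y)} \delta(x,y)^s \, \nu_s(dx)
\le C \sum_{k\ge 0} (\ee 2^{-k-1})^{s} (\ee 2^{-k})^\alpha
\le C' \ee^{\alpha-|s|},
\]
uniformly in $y$ and $s\in(-s_0,s_0)$, since $|s|<\alpha$. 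The same bound holds with $y$ replaced by $y'$, and $|\delta(x,y')^s|=\delta(x,y')^s$ since $s$ is real. This controls the integral of $|\delta(x,y)^s-\delta(x,y')^s|$ on $A_\ee(y)\cup A_\ee(y')$ by $C''\ee^{\alpha-|s|}$.

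Next, on $B_\ee(y,y')$ both $\delta(x,y)$ and $\delta(x,y')$ lie in $[\ee,1]$. The function $t\mapsto t^s$ is Lipschitz on $[\ee,1]$ with constant $|s|\ee^{s-1}$, so
\[
|\delta(x,y)^s-\delta(x,y')^s| \le |s|\ee^{s-1}\, |\delta(x,y)-\delta(x,y')|.
\]
Choosing unit representatives $f,f'$ of $y,y'$ with $|f-f'|$ small, the bound $|\delta(x,y)-\delta(x,y')|\le |f-f'|$ follows from the triangle inequality applied to $|\langle f,v\rangle|/|v|$. Hence the contribution of $B_\ee(y,y')$ is at most $|s|\ee^{s-1}|f-f'|$, which tends to $0$ as $y'\to y$ (with $\ee$ fixed). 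Combining the two estimates, given $\eta>0$ I first pick $\ee$ so small that $C''\ee^{\alpha-|s|}<\eta/2$ and then require $y'$ close enough to $y$ to make the $B_\ee$-contribution less than $\eta/2$.

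The main obstacle I anticipate is the integrability bound on the singular pieces $A_\ee(y)$; once $s_0\le\alpha$ is chosen so that $|s|<\alpha$, Proposition \ref{Thm-Regu-s} is exactly what is needed, and the dyadic estimate above gives the desired $\ee^{\alpha-|s|}$ bound uniformly in $y$ and $s$. The outer piece on $B_\ee(y,y')$ is routine, relying only on Lipschitz continuity of $(x,y)\mapsto\delta(x,y)$ and of $t\mapsto t^s$ on compact subintervals of $(0,1]$.
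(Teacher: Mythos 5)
Your proof is correct, but it takes a genuinely different route from the paper. The paper's proof is a two-line soft argument: it writes $\| \nu_s^{y}-\nu_s^{y'}\|_{TV}=\int |\delta(x,y)^{s}-\delta(x,y')^{s}|\,\nu_s(dx)$ as an integral of a ratio, applies a three-factor H\"older inequality so that the numerator $|\delta(x,y')^{-s}-\delta(x,y)^{-s}|^{3}$ (which is bounded, since $-s>0$ and $\delta\leq 1$) tends to $0$ by dominated convergence, while the two remaining factors $\int \delta(x,\cdot)^{3s}\,\nu_s(dx)$ stay bounded by \eqref{ReguInverS01}, at the price of taking $s_0$ with $3|s|\leq\alpha$. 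You instead localize near the singular hyperplanes: dyadic shells together with \eqref{ReguInverS02} give the uniform tail bound $\int_{\{\delta(x,y)\leq\ee\}}\delta(x,y)^{s}\nu_s(dx)\leq C\ee^{\alpha-|s|}$ (valid since $\{\delta(x,y)=0\}$ is $\nu_s$-null by \eqref{ReguInverS01}), and away from the hyperplanes you use the Lipschitz bounds for $t\mapsto t^{s}$ on $[\ee,1]$ and $|\delta(x,y)-\delta(x,y')|\leq|f-f'|$. This is more hands-on, but it buys more: your estimate is quantitative and uniform in $y$ and $s$, and optimizing $\ee$ against $|f-f'|$ would even give a H\"older modulus of continuity for $y\mapsto\nu_s^{y}$ in total variation, whereas the paper's dominated-convergence step is purely qualitative. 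One small piece of bookkeeping you elide: bounding the integrand on the union $A_\ee(y)\cup A_\ee(y')$ also requires the cross terms such as $\int_{A_\ee(y')\setminus A_\ee(y)}\delta(x,y)^{s}\nu_s(dx)$, but there $\delta(x,y)>\ee$, so this is at most $\ee^{s}\,\nu_s(A_\ee(y'))\leq C\ee^{\alpha-|s|}$ by \eqref{ReguInverS02}, and the stated bound $C''\ee^{\alpha-|s|}$ on the union follows; this is routine and does not affect the validity of the argument.
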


\begin{proof}
Let $s_0$ be small enough and $s\in (-s_0,0)$. 
Let $y, y' \in (\mathbb P^{d-1})^*$. Since both $\nu_s^{y}$ and $\nu_s^{y'}$ are absolutely continuous 
with respect to $\nu_s$, we have
\begin{align*} 
\| \nu_s^{y}  - \nu_s^{y'} \|_{TV} &= 
\int_{\mathbb P^{d-1}} 
\left| \delta(x,y)^{s}  -  \delta(x,y')^{s}   \right| \nu_s(dx) \\
&=  \int_{\mathbb P^{d-1}}  \frac{ | \delta(x,y')^{-s} - \delta(x,y)^{-s} | }{ \delta(x,y)^{-s} \delta(x,y')^{-s} }  \nu_s(dx). 
\end{align*}
By the H\"older inequality we get
\begin{align*} 
\| \nu_s^{y}  - \nu_s^{y'} \|_{TV}^3  
\leq  &\int_{\mathbb P^{d-1}}   | \delta(x,y')^{-s} - \delta(x,y)^{-s} | ^3 \nu_s(dx)  \\ 
&\times \int_{\mathbb P^{d-1}}   \delta(x,y)^{3s}  \nu_s(dx) 
 \int_{\mathbb P^{d-1}}   \delta(x,y')^{3s}  \nu_s(dx).
\end{align*}
As $y' \to y$, the first term converges to $0$ by the dominated convergence theorem,
whereas the other two terms remain bounded by Proposition \ref{Thm-Regu-s}.
\end{proof}

\subsection{The explicit form of the eigenfunction for negative $s<0$}

We apply the results of the two previous sections to give an explicit form 
of the eigenfunctions of the operators $P_s$ and $P_s^*$  for negative $s<0$. 
Let us recall the corresponding results for $s>0$ which have been established in \cite{GL16}: 
under conditions \ref{Cond strong-irred} and \ref{Cond proxim}, for any $s\in I_{\mu}^+$, 
the functions
\begin{align} \label{eq-eigenfun positive001}
r_{s}(x)    = \int_{(\mathbb P^{d-1})^*}  \delta(x,y)^s \nu^*_{s}(dy),  \quad
r_{s}^*(y) = \int_{\mathbb P^{d-1}}        \delta(x,y)^s \nu_{s}(dx)
\end{align} 
are the unique (up to a scaling constant) non-negative eigenfunctions of the operators $P_s$ and $P_s^*$.
The proof of these expressions 
for $s<0$ is quite different from that in the case $s>0$; 
it requires the H\"{o}lder regularity of the eigenmeasures $\nu_s$ and $\nu_s^*$, which has been  
established in Proposition \ref{Thm-Regu-s}.

First we state the cohomological equation (see \cite{BQ16b}) which will also be useful later on: 
for any $g \in \bb G$, $y=\mathbb R f \in (\mathbb P^{d-1})^*$ and $x=\mathbb R v \in \mathbb P^{d-1}$,
\begin{align} \label{cohomological-001}
\log \delta (y,gx) +\sigma (g,x) = \log \delta (x,g^*y) +\sigma (g^*,y). 
\end{align}
For the ease of the reader we include a short proof of \eqref{cohomological-001}.
By elementary transformations, 
\begin{align*} 
\log \frac{| \langle f, g v  \rangle |}{|f| |v|} = \log \frac{| \langle f, g v  \rangle |}{|f | |g v|} + \log \frac{| g v |}{|v|}    
= \log \delta(y, gx) + \sigma(g,x)
\end{align*}
and, in the same way,
\begin{align*} 
\log \frac{|\langle v, g^* f  \rangle |}{|f| |v|} = \log \frac{| \langle v, g^* f  \rangle |}{ |g^* f| |v|}  + \log \frac{| g^* f |}{|f|}    
=  \log\delta(x, g^* y) + \sigma(g^*,y).
\end{align*}
By the definition of the automorphism $g^*$, we have 
$
\langle f, g v   \rangle = \langle v, g^* f   \rangle,$ 
hence the identity \eqref{cohomological-001} follows.


\begin{lemma} \label{Lemma-expleinenfun-s-neg} 
Assume conditions \ref{Cond strong-irred}, \ref{Cond proxim} and \ref{Two sided exponential moment}. 
Then, there exists a constant $s_0 > 0$ such that for any $s \in (-s_0, 0)$, 
the eigenfunctions $r_s$ and $r_s^*$ are defined (up to a scaling constant) as follows: 
for $x\in \mathbb P^{d-1}$ and $y\in (\mathbb P^{d-1})^*$, 
\begin{align} \label{expleigenfun001}
r_{s}(x)    = \int_{(\mathbb P^{d-1})^*}  \delta(x,y)^s \nu^*_{s}(dy),  \quad
r_{s}^*(y) = \int_{\mathbb P^{d-1}}        \delta(x,y)^s \nu_{s}(dx).
\end{align} 
Moreover, $\kappa^*(s)=\kappa(s)$ for any $s \in (-s_0, 0)$. 
\end{lemma}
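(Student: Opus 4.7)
The plan is to verify directly that the integral on the right-hand side of \eqref{expleigenfun001} defines a positive continuous eigenfunction of $P_s$ associated with the eigenvalue $\kappa^*(s)$, and then to invoke the uniqueness Lemma \ref{lemma-uniqueness of eifunc001} to identify it (up to a scalar) with $r_s$ and to deduce $\kappa^*(s) = \kappa(s)$. The symmetric argument on $(\mathbb P^{d-1})^*$ will then yield the formula for $r_s^*$.

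Set $\widetilde r_s(x) := \int_{(\mathbb P^{d-1})^*} \delta(x,y)^s \nu^*_s(dy)$. For $s<0$ the integrand is singular along $\{y : \delta(x,y)=0\}$, so the first task is to secure its integrability, uniformly in $x$ and $s$. I would do this by repeating the argument of Proposition \ref{Thm-Regu-s} on the dual projective space, using the operator $P_s^*$ and the eigenmeasure $\nu_s^*$ provided by Lemma \ref{lemma-uniqueness of eigenmeasurestar002}, to obtain an exponent $\alpha>0$ with
\begin{align*}
\sup_{s \in (-s_0, s_0)} \sup_{x \in \mathbb P^{d-1}} \int_{(\mathbb P^{d-1})^*} \delta(x,y)^{-\alpha}\, \nu_s^*(dy) < +\infty.
\end{align*}
After shrinking $s_0$ so that $s_0 < \alpha$, this yields a uniform upper bound on $\widetilde r_s$; positivity is obvious. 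Continuity of $x \mapsto \widetilde r_s(x)$ then follows from a total-variation estimate analogous to Corollary \ref{Corol-TVcontin001}, with the roles of $\mathbb P^{d-1}$ and $(\mathbb P^{d-1})^*$ exchanged and using the regularity of $\nu_s^*$ just established.

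The core computation is to verify $P_s \widetilde r_s = \kappa^*(s) \widetilde r_s$. Exponentiating the cohomological equation \eqref{cohomological-001} with exponent $s$ gives the pointwise identity
\begin{align*}
e^{s \sigma(g,x)} \delta(gx, y)^s = e^{s \sigma(g^*, y)} \delta(x, g^*y)^s.
\end{align*}
The uniform bound on $\widetilde r_s$ combined with $\int_{\mathbb G} N(g)^{|s|}\mu(dg)<\infty$ from condition \ref{Two sided exponential moment} shows $P_s\widetilde r_s(x)<\infty$, which justifies Fubini, and then
\begin{align*}
P_s \widetilde r_s(x)
&= \int_{\mathbb G} \int_{(\mathbb P^{d-1})^*} e^{s\sigma(g,x)} \delta(gx, y)^s\, \nu_s^*(dy)\, \mu(dg) \\
&= \int_{(\mathbb P^{d-1})^*} \int_{\mathbb G} e^{s\sigma(g^*,y)} \delta(x, g^*y)^s\, \mu(dg)\, \nu_s^*(dy) \\
&= \int_{(\mathbb P^{d-1})^*} P_s^*[\delta(x,\cdot)^s](y)\, \nu_s^*(dy)
 = \kappa^*(s)\, \widetilde r_s(x),
\end{align*}
where the last equality uses $\nu_s^* P_s^* = \kappa^*(s) \nu_s^*$. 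Since $\widetilde r_s$ is a positive continuous eigenfunction of $P_s$, Lemma \ref{lemma-uniqueness of eifunc001} forces $\widetilde r_s = c\, r_s$ for some $c>0$ and, comparing eigenvalues, $\kappa^*(s) = \kappa(s)$.

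The genuine obstacle is the very first step: establishing the H\"older regularity of $\nu_s^*$ for negative $s$, which requires transporting the change-of-measure argument of \cite[Lemma 14.11]{BQ16b} used in Proposition \ref{Thm-Regu-s} to the adjoint setting, and simultaneously checking that the spectral constants behave uniformly in $s \in (-s_0,s_0)$. Once this regularity is in hand, the cohomological identity and Fubini manipulations are purely formal, and the uniqueness Lemma \ref{lemma-uniqueness of eifunc001} closes the argument.
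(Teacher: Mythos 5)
Your proposal is correct and follows essentially the same route as the paper's proof: define the candidate function via the integral against $\nu_s^*$, use the dual-space regularity and total-variation continuity to get boundedness, positivity and continuity, apply the exponentiated cohomological identity \eqref{cohomological-001} with Fubini and the eigenmeasure relation $\nu_s^* P_s^* = \kappa^*(s)\nu_s^*$, and conclude by the uniqueness of the positive continuous eigenfunction (Lemma \ref{lemma-uniqueness of eifunc001}). The only difference is cosmetic: you spell out explicitly the transfer of Proposition \ref{Thm-Regu-s} and Corollary \ref{Corol-TVcontin001} to the adjoint projective space, which the paper invokes implicitly.
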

\begin{proof}
Let $x\in \mathbb P^{d-1}$. 
By Proposition \ref{Thm-Regu-s}, there exists $s_0 > 0$ such that
\begin{align*} 
\phi_{s}(x)    = \int_{(\mathbb P^{d-1})^*}  \delta(x,y)^s \nu^*_{s}(dy) 
\end{align*}
is well defined and positive for any $s\in (-s_0,0)$.
By Corollary \ref{Corol-TVcontin001}, the function $\phi_s$ is continuous on $\mathbb P^{d-1}$
when $s_0$ is small enough. 
We claim that $P_s\phi_s = \kappa^*(s)\phi_s$ 
with $\kappa^*$ from Lemma \ref{lemma-uniqueness of eigenmeasurestar002}.
Indeed, since $\phi_s$ is uniformly bounded on $\mathbb{P}^{d-1}$, 
using the cohomological identity \eqref{cohomological-001} and Fubini's theorem we get
\begin{align} 
P_{s} \phi_{s}(x) &  = \int_{\bb G}  e^{s \sigma(g, x) } 
\left( \int_{ (\bb P^{d-1})^* }  \delta (gx, y)^s  \nu_{s}^*(dy) \right) \mu(dg)  \nonumber\\
& =  \int_{ (\bb P^{d-1})^* }  \int_{\bb G}  e^{s \sigma(g^*, y) + s \log \delta (x, g^*y) }
  \mu(dg) \nu_{s}^*(dy).  \label{Scalequa 01}  
\end{align}
The function $y\mapsto \delta(x,y)^s$ belongs to the space $L^1(\nu_s^*)$.
As the operator $P_s^*$ is positive and $\nu_s^*$ is a probability eigenmeasure of $P_s^*$,
then $P_s^*$ can be extended to be a bounded operator on $L^1(\nu_s^*)$, which we still denote by $P_s^*$.
Therefore, by \eqref{Scalequa 01}, 
\begin{align*} 
P_{s} \phi_{s}(x)  
 &=  \int_{ (\bb P^{d-1})^* } P_s^* (\delta(x,\cdot )^s) (y)  \nu_{s}^*(dy) \\
 &= \kappa^*(s) \int_{(\mathbb P^{d-1})^*}  \delta(x,y)^s \nu^*_{s}(dy)  
 =  \kappa^*(s) \phi_s(x). 
\end{align*}
By Lemma \ref{lemma-uniqueness of eifunc001}
we get that $\kappa^*(s)=\kappa(s)$ and
$r_s = c \phi_s$, for some constant $c>0$.

The proof for $r_s^*$ is similar and therefore will not be detailed here. 
\end{proof}
To summarize,  
the same relations between $P_s$, $P_s^*$, $\nu_s$, $\nu_s^*$, $r_s$, $r_s^*$ and $\kappa(s)$
hold for small negative and positive $s$: under appropriate conditions 
there exists $s_0>0$ such that for $s\in (-s_0,0)\cup I_{\mu}^+$,
\begin{align} \label{eqt-eigenfincofP001}
P_s \nu_s=\kappa(s) \nu_s,  \quad P_s^* \nu_s^*=\kappa(s) \nu_s^*
\end{align}
and
\begin{align} \label{eqt-eigenfincofP002}
P_s r_s=\kappa(s) r_s,\quad P_s^* r_s^*=\kappa(s) r_s^*.
\end{align}

\subsection{Harmonicity of the invariant measure} \label{sec-harmon001}

Assume that $s\in (-s_0,0)$ $\cup I_\mu^+ $, where $s_0$ is small enough.
For any $y\in (\mathbb P^{d-1})^*$ and bounded measurable function $\varphi$ on $\mathbb P^{d-1}$ denote 
\begin{align} \label{def-pi_s^x-001}
\upsilon_s^y(\varphi) 
= \int_{\mathbb P^{d-1}} \varphi(x) \frac{ \delta(x,y)^s }{r_s^*(y)}  \nu_s(dx).
\end{align}
Due to the regularity of the eigenmeasure $\nu_s$ we have $0< \delta (x,y)\leq 1$, 
 $\nu_s$-a.s.\ on $\mathbb P^{d-1}$ 
(for large $s>0$ this follows from Theorem \ref{Th-main-hyperplane001};  for small $s<0$ it can be deduced from 
 Proposition \ref{Thm-Regu-s}).
Since the eigenfunction $r_s^*$ is bounded and strictly positive on $\mathbb P^{d-1}$, 
the measures $\nu_s$ and $\upsilon_s^y$ are equivalent.

\begin{lemma} \label{lemma-conTVnorm001} The following two assertions hold:

1. Assume conditions \ref{Cond strong-irred} and \ref{Cond proxim}.
Then, for any $s\in I_\mu^+$, the mapping
$y \in (\mathbb P^{d-1})^*\mapsto \upsilon_s^y \in \mathcal C (\mathbb P^{d-1})'$
is continuous in the total variation norm $ \|\cdot \|_{\mathrm{TV}}$.

2. Assume conditions \ref{Cond strong-irred}, \ref{Cond proxim} and \ref{Two sided exponential moment}.
Then, there exists a constant $s_0>0$ such that for any $s\in (-s_0,0)$, 
the mapping 
$y \in (\mathbb P^{d-1})^*\mapsto \upsilon_s^y \in \mathcal C (\mathbb P^{d-1})'$
is continuous in the total variation norm $ \|\cdot \|_{\mathrm{TV}}$. 
\end{lemma}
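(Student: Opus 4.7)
The plan is to reduce the assertion to the continuity in total variation norm of the simpler functional $\nu_s^y(\varphi) := \int_{\mathbb{P}^{d-1}} \varphi(x) \delta(x,y)^s \nu_s(dx)$. Since $\upsilon_s^y = r_s^*(y)^{-1} \nu_s^y$, the triangle inequality yields
$$\|\upsilon_s^{y'} - \upsilon_s^y\|_{\mathrm{TV}} \leq r_s^*(y')^{-1}\|\nu_s^{y'} - \nu_s^y\|_{\mathrm{TV}} + \left|r_s^*(y')^{-1} - r_s^*(y)^{-1}\right|\|\nu_s^y\|_{\mathrm{TV}}.$$
In both regimes the eigenfunction $r_s^*$ is a strictly positive continuous function on the compact space $(\mathbb{P}^{d-1})^*$---for $s\in I_\mu^+$ by the spectral theory of \cite{GL16}, and for $s\in(-s_0,0)$ by Lemma \ref{Lemma-expleinenfun-s-neg} together with the perturbation theory of Section \ref{sec-exist eigenmes}. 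Moreover, $\|\nu_s^y\|_{\mathrm{TV}}$ is uniformly bounded in $y$: trivially when $s\geq 0$, and via Proposition \ref{Thm-Regu-s} when $s<0$ (shrinking $s_0$ if necessary so that $|s|$ stays below the H\"older exponent $\alpha$). Once these uniform bounds are in place, the proof reduces in both parts to establishing continuity of $y\mapsto \nu_s^y$ in the total variation norm.

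For assertion 2, the case $s\in(-s_0,0)$, this continuity is precisely the content of Corollary \ref{Corol-TVcontin001}, whose proof uses the regularity estimate \eqref{ReguInverS01} together with H\"older's inequality and dominated convergence. The conclusion therefore follows immediately.

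For assertion 1, the case $s\in I_\mu^+$, the argument is considerably simpler because $0\leq \delta(x,y)\leq 1$ and $s\geq 0$ together imply $0\leq \delta(x,y)^s\leq 1$. The function $(x,y)\mapsto \delta(x,y)^s$ is continuous on the compact product $\mathbb{P}^{d-1}\times (\mathbb{P}^{d-1})^*$ and therefore uniformly continuous, so $\sup_{x\in\mathbb{P}^{d-1}}|\delta(x,y')^s - \delta(x,y)^s| \to 0$ as $y'\to y$, whence
$$\|\nu_s^{y'} - \nu_s^y\|_{\mathrm{TV}} \leq \sup_{x\in\mathbb{P}^{d-1}}\left|\delta(x,y')^s - \delta(x,y)^s\right|\cdot \nu_s(\mathbb{P}^{d-1}) \longrightarrow 0.$$

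I do not anticipate any serious obstacle. The only delicate issue---the singularity of $\delta(\cdot, y)^s$ near the hyperplane $\{\delta(\cdot,y)=0\}$ in the negative-$s$ regime---has already been handled by the H\"older regularity of Proposition \ref{Thm-Regu-s} and packaged into Corollary \ref{Corol-TVcontin001}; the present lemma is essentially a reformulation of that corollary in terms of the normalized measure $\upsilon_s^y$, combined with the continuity and strict positivity of the eigenfunction $r_s^*$.
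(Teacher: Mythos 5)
Your proof is correct and follows essentially the same route as the paper: for $s\in I_\mu^+$ the paper likewise invokes the (uniform) continuity of $(x,y)\mapsto\delta(x,y)^s$ (citing Lemma 3.5 of \cite{GL16}), and for $s\in(-s_0,0)$ it reduces the claim to Corollary \ref{Corol-TVcontin001}, exactly as you do. Your additional handling of the normalizing factor $r_s^*(y)$ via the triangle inequality and the continuity and strict positivity of $r_s^*$ is a routine detail the paper leaves implicit, and it is correctly justified by the results you cite.
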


\begin{proof}
For positive $s>0$ the assertion of the lemma is easily proved due to the continuity 
of the mapping $(x,y)\mapsto \delta(x,y)^s$ (see Lemma 3.5 of \cite{GL16}).
For negative $s<0$ the assertion of the lemma follows from Corollary \ref{Corol-TVcontin001}.
\end{proof}

For any $g\in \mathbb G$ and $y \in (\mathbb P^{d-1})^*$, set 
\begin{align} \label{q-dens-001}
q_s^*(g,y) = \frac{ e^{s\sigma(g^*,y)} }{\kappa(s) } \frac{ r_s^*(g^* y )}{r_s^*(y)}. 
\end{align}
By \eqref{eqt-eigenfincofP002}, for any $y \in (\mathbb P^{d-1})^*,$
the function $q_s^*(g,y)$ is a positive density on $\mathbb G$, i.e.
\begin{align} \label{q-dens-002}
\int_{\mathbb G} q_s^*(g,y) \mu(dg) =1
\end{align}
and, for any $g\in \mathbb G$,
\begin{align} \label{q-dens-003}
q_s^*(g,y) >0.
\end{align}
For any $g\in\mathbb G$, $y\in (\mathbb P^{d-1})^*$ 
and bounded measurable function $\varphi$ on $\mathbb P^{d-1}$, define 
\begin{align} \label{Def_g_upsilon}
g \upsilon_s^y (\varphi) = \int_{\mathbb P^{d-1}} \varphi (g x) \upsilon_s^y (d x). 
\end{align}
The following lemma can be viewed as a generalization of the stationary property 
$\pi_s = \pi_s Q_s $. 
For $s>0$ it has been obtained in \cite[ Lemma 3.6]{GL16}. 
\begin{lemma} \label{lemmaSG-001}
Assume \ref{Cond strong-irred}, \ref{Cond proxim} and $s\in I_\mu^+$,
or \ref{Cond strong-irred}, \ref{Cond proxim}, \ref{Two sided exponential moment} 
and $s\in (-s_0,0)$ with small enough $s_0>0$. 
Then, for any $y\in (\mathbb P^{d-1})^*$ and bounded measurable function $\varphi$ on $\mathbb P^{d-1}$, 
\begin{align*} 
\upsilon_s^y (\varphi) 
= \int_{\mathbb G}  g \upsilon_s^{g^* y} (\varphi)  q_s^*(g,y) \mu(dg).
\end{align*}
\end{lemma}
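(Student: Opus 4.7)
The plan is to start from the right-hand side, unfold all definitions, and apply the cohomological identity \eqref{cohomological-001} to turn the $g^*$-action on $y$ into a $g$-action on $x$, after which the eigenequation $P_s \nu_s = \kappa(s) \nu_s$ collapses the expression into $\upsilon_s^y(\varphi)$.

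More precisely, I would first substitute the definition \eqref{Def_g_upsilon} of $g\upsilon_s^{g^*y}(\varphi)$ together with the definition \eqref{def-pi_s^x-001} of $\upsilon_s^{g^*y}$ and the explicit form \eqref{q-dens-001} of $q_s^*(g,y)$. The factor $r_s^*(g^*y)$ coming from $\upsilon_s^{g^*y}$ cancels with the same factor in $q_s^*(g,y)$, leaving
\begin{align*}
\int_{\mathbb G} g\upsilon_s^{g^*y}(\varphi)\, q_s^*(g,y)\,\mu(dg)
= \frac{1}{\kappa(s)\,r_s^*(y)}\int_{\mathbb G}\int_{\mathbb P^{d-1}} \varphi(gx)\,\delta(x,g^*y)^s\, e^{s\sigma(g^*,y)}\,\nu_s(dx)\,\mu(dg).
\end{align*}
Next, I would apply the cohomological identity \eqref{cohomological-001}, which gives $\delta(x,g^*y)^s e^{s\sigma(g^*,y)} = \delta(y,gx)^s e^{s\sigma(g,x)}$ (using also the symmetry $\delta(x,y)=\delta(y,x)$ that is immediate from the definition). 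Setting $\psi(x) := \varphi(x)\delta(y,x)^s$, the integrand becomes $\varphi(gx)\delta(y,gx)^s e^{s\sigma(g,x)} = (\psi\circ g\text{-action})(x)\, e^{s\sigma(g,x)}$, whose $\mu$-integral in $g$ is exactly $P_s\psi(x)$ by \eqref{DefPs-001a}.

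Then, by the eigenequation \eqref{eqt-eigenfincofP001}, $\int_{\mathbb P^{d-1}} P_s\psi\,d\nu_s = \kappa(s)\int_{\mathbb P^{d-1}}\psi\,d\nu_s$, and plugging this in gives
\begin{align*}
\int_{\mathbb G} g\upsilon_s^{g^*y}(\varphi)\, q_s^*(g,y)\,\mu(dg)
= \frac{1}{r_s^*(y)}\int_{\mathbb P^{d-1}} \varphi(x)\,\delta(y,x)^s\,\nu_s(dx) = \upsilon_s^y(\varphi),
\end{align*}
which is the desired identity. The exchange of integrals (Fubini) in the first step is the main technicality: for $s \in I_\mu^+$ it is immediate since $\delta \le 1$ and $\varphi$ is bounded, while for $s \in (-s_0,0)$ it requires integrability of $\delta(y,\cdot)^s$ against $\nu_s$ uniformly in $y$, which is precisely provided by Proposition \ref{Thm-Regu-s} together with the boundedness of $r_s^*$ away from $0$ and $\infty$. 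This integrability is the only place where the two regimes must be handled separately, so it is the main obstacle; once it is justified, the chain of identities above runs through without further difficulty.
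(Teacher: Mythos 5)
Your proposal is correct and uses exactly the same ingredients as the paper's proof (unfolding the definitions of $\upsilon_s^y$ and $q_s^*$, the cohomological identity \eqref{cohomological-001}, Fubini, and the eigenequation $P_s\nu_s=\kappa(s)\nu_s$), merely running the chain of identities from the right-hand side to the left instead of the paper's left-to-right direction. This is essentially the same argument, and your remark on justifying Fubini for $s<0$ via Proposition \ref{Thm-Regu-s} matches the paper's implicit use of the regularity of $\nu_s$.
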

\begin{proof}
For short we denote $\psi_s^y(x)= \varphi(x) e^{s\delta(x,y)}/r_s^*(y).$
By the definition \eqref{def-pi_s^x-001} of the measure $\upsilon_s^y$, we have
\begin{align*} 
\upsilon_s^y(\varphi)
= \int_{\mathbb P^{d-1}} \varphi(x) \frac{e^{s\log \delta(x,y)}}{r_s^*(y)} \nu_s(dx)
= \int_{\mathbb P^{d-1}} \psi_s^y(x) \nu_s(dx). 
\end{align*}
From the identity $P_s \nu_s(\psi_s^y) = \kappa(s)\nu_s(\psi_s^y),$  it follows that  
\begin{align*} 
\upsilon_s^y(\varphi) 
& = \frac{1}{\kappa(s)}\int_{\mathbb P^{d-1}} P_s \psi_s^y (x) \nu_s(dx) \nonumber\\
& = \frac{1}{ \kappa(s) r_s^*(y)} \int_{\mathbb P^{d-1}} \int_{\mathbb G}    \varphi(gx)   
e^{s\log \delta(gx,y)+ s\sigma(g,x)}   \mu(dg) \nu_s(dx).
\end{align*}
Using the cohomological identity \eqref{cohomological-001}, the Fubini theorem and \eqref{Def_g_upsilon}, we get
\begin{align*} 
\upsilon_s^y(\varphi) 
&= \frac{1}{\kappa(s)r_s^*(y)} \int_{\mathbb G} \int_{\mathbb P^{d-1}}   \varphi(gx)   
 e^{s\log \delta(x,g^*y)+s\sigma(g^*,y)}  \nu_s(dx) \mu(dg) \nonumber\\
&=  \int_{\mathbb G} \frac{e^{s\sigma(g^*,y)}}{ \kappa(s)r_s^*(y) }  \int_{\mathbb P^{d-1}}   \varphi(gx)   
e^{s\log \delta(x,g^*y)}  \nu_s(dx) \mu(dg) \nonumber\\
&=  \int_{\mathbb G} \frac{e^{s\sigma(g^*,y)}}{\kappa(s)} \frac{r_s^*(g^*y)}{r_s^*(y)}  
 g \upsilon_s^{g^*y}(\varphi) \mu(dg)  \nonumber\\
& = \int_{\mathbb G}  q_s^*(g,y) g \upsilon_s^{g^* y} (\varphi) \mu(dg), 
\end{align*}
as desired. 
\end{proof}

\section{Auxiliary statements}


\subsection{Stationary measures on finite extensions} \label{sec-finite extensions}
Let $G$ be locally compact subgroup of $\mathbb G$.
Assume that $H < G$ is a closed subgroup of finite index, which means that the quotient $ G / H$ is a finite set.
 Let $\mu$ be a probability measure on $ G$.
Denote by $\Omega$ the set of infinite sequences $(g_1,g_2,\ldots)$ and equip it with the measure
$\mu^{\otimes \mathbb N^*}$.  
For any $\omega \in\Omega$ set 
\begin{align*} 
\tau(\omega) = \min \{ k\geq 1 :   \  g_k\ldots g_1 \in   H  \}.
\end{align*}
The stopping time $\tau$ is $\mu^{\otimes \mathbb N^*}$-a.s.\ finite, see Lemma 5.5 of \cite{BQ16b}.
Define the mapping $\mathfrak f: \omega\in \Omega \mapsto g_{\tau(\omega)}\ldots g_1 \in  H$.
Let $\mu_{H}$ be the image of the measure 
$\mu^{\otimes \mathbb N^*}$ by the mapping $\mathfrak f$.
We call $\mu_{H}$ the probability measure induced by $\mu$ on the subgroup $ H$. 
From Lemma 5.7 of \cite{BQ16b} we have the following assertion.
\begin{lemma} \label{lemma 5-7bookBQ}
Assume that the probability measure $\mathfrak m$ is $\mu$-stationary on $\mathbb P^{d-1}$. 
Then $\mathfrak m$  is also a $\mu_{H}$-stationary probability measure. 
\end{lemma}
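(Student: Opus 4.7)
The plan is to recognize $\mu_H$ as the first-return distribution of an augmented random walk on $\mathbb P^{d-1} \times G/H$ and to deduce the $\mu_H$-stationarity of $\mathfrak m$ via the standard theory of induced Markov chains.

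First I would introduce the Markov chain $(X_n, C_n)$ on $\mathbb P^{d-1} \times G/H$ whose transition kernel $Q$ sends $(x, c)$ to $(gx, gc)$ with $g \sim \mu$; here $C_n = G_n H$ records the left coset of $G_n$, which evolves autonomously as a random walk on the finite set $G/H$. Since left multiplication acts as a bijection on $G/H$, one checks that $\sum_{c \in G/H} \mu(\{g : gc = c'\}) = 1$ for every $c' \in G/H$, so the uniform probability $\mathrm{unif}_{G/H}$ is stationary for the coset chain. Combining this with the $\mu$-stationarity of $\mathfrak m$, a direct Fubini computation on product functions shows that $\widetilde{\pi} := \mathfrak m \otimes \mathrm{unif}_{G/H}$ is $Q$-stationary on $\mathbb P^{d-1} \times G/H$.

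The key observation is that, starting the augmented chain from the subset $B := \mathbb P^{d-1} \times \{[e]\}$, the first return time to $B$ is exactly $\tau = \min\{n \geq 1 : G_n \in H\}$, and at this return time the state is $(G_\tau x, [e])$. Since $G_\tau \sim \mu_H$ by construction of the induced measure, the Markov chain induced on $B$, identified with $\mathbb P^{d-1}$, is precisely the $\mu_H$-walk with transition $P_{\mu_H}\varphi(x) = \int \varphi(hx)\mu_H(dh)$. By the standard Kac-type result for induced Markov chains, the normalized restriction $\widetilde{\pi}|_B / \widetilde{\pi}(B) = \mathfrak m \otimes \delta_{[e]}$ is stationary for the induced chain, and projecting onto $\mathbb P^{d-1}$ yields $\mu_H * \mathfrak m = \mathfrak m$, as desired.

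The main obstacle will be justifying the induced-chain invariance rigorously: a priori one wants positive recurrence of $B$, but in our setting it suffices that $\tau$ be $\mathbb P$-almost surely finite, which the excerpt already records as a consequence of $[G:H] < \infty$. The invariance then follows from a first-step analysis: setting $h(y) = \mathbb E_y[f(Y_\tau)]$ for a bounded test function $f$ supported on $B$ (and extended by zero off $B$), one has $h = Qf + Q(h \mathbf 1_{B^c})$; integrating against $\widetilde{\pi}$ and using $\widetilde{\pi} Q = \widetilde{\pi}$ gives $\int_B h\, d\widetilde{\pi} = \int_B f\, d\widetilde{\pi}$, which after normalization is exactly the stationarity of $\mathfrak m$ under $P_{\mu_H}$.
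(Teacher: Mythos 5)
Your argument is correct, but it takes a genuinely different route from the paper, which in fact gives no argument at all: the lemma is quoted from Lemma 5.7 of Benoist and Quint \cite{BQ16b}, where the standard proof is by optional stopping — for a $\mu$-stationary $\mathfrak m$ the measures $(g_1\cdots g_n)_*\mathfrak m$ form a bounded measure-valued martingale, and Doob's theorem at the a.s.\ finite stopping time $\tau$ gives $\mathbb E\left[(g_1\cdots g_\tau)_*\mathfrak m\right]=\mathfrak m$, i.e.\ $\mu_H*\mathfrak m=\mathfrak m$. You instead pass to the skew product on $\mathbb P^{d-1}\times G/H$: the coset kernel is doubly stochastic, so $\widetilde\pi=\mathfrak m\otimes\mathrm{unif}_{G/H}$ is $Q$-stationary (your Fubini computation is right; the coupling of the two coordinates is harmless because $c\mapsto gc$ is a bijection of $G/H$), the first-return chain on $B=\mathbb P^{d-1}\times\{[e]\}$ is exactly the $\mu_H$-walk, and you make the Kac-type invariance self-contained via the first-step identity $h=Qf+Q(h\mathbf 1_{B^c})$ integrated against $\widetilde\pi$. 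Two details are worth stating explicitly but are not gaps: define $f(Y_{\tau_B}):=0$ on $\{\tau_B=\infty\}$, since from cosets other than $[e]$ the hitting time of $B$ need not be a.s.\ finite (the identity and the cancellation still hold, and on $B$ itself $\tau_B=\tau<\infty$ a.s., which is all you use); and the stationarity $\widetilde\pi Q=\widetilde\pi$, checked on continuous functions, extends to the bounded Borel function $h\mathbf 1_{B^c}$ because both are Borel measures on a compact metric space. As for what each approach buys: the martingale proof is shorter and works for any a.s.\ finite stopping time, using nothing of the finite-index structure beyond the finiteness of $\tau$; your induced-chain proof is more elementary (no martingale convergence or optional stopping), makes the role of the finite coset space transparent, and works directly with the paper's left-product convention $G_n=g_n\cdots g_1$, whereas the measure-valued martingale is naturally adapted to products taken in the opposite order, so that route requires either reversing the increments or observing that the two induced measures coincide.
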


\subsection{Determination of the support of the stationary measure}\label{subsec-support}

Since we will use facts from complex algebraic geometry, we now introduce 
some basic notions on the complex algebraic sets.
We recall that a subset $X$ in $\mathbb C^d$ is algebraic if there exist complex polynomial functions
$p_1,\ldots,p_k$ on $\mathbb C^d$ such that $X= \{v\in \mathbb C^d :  p_1(v)=\ldots =p_k(v)=0 \}$.  
We say that $X$ is homogeneous if for every $t\in \mathbb C $ and $v\in X$ it holds that $tv \in X$. 
The topological space $X$ is called Noetherian
if every non-increasing sequence of closed subsets is eventually constant. 

We also need the projective space of $\mathbb C^d$, which is defined in a similar way: 
$\mathbb P^{d-1}_{\mathbb C}$ 
is the set of elements $x=\mathbb C v$, 
where $v\in \mathbb C^d\setminus \{ 0 \}.$ 
A subset $Y$ of $\mathbb P^{d-1}_{\mathbb C}$ 
is algebraic if there exists an algebraic homogeneous subset
$X$ whose projective image is $Y$.
With this notion, a set $X$ in $\mathbb R^d$ is (real) algebraic if and only if there exists a complex 
algebraic set $X'$ such that $X=X'\cap \mathbb R^d$.   
In the same way, a set $Y$ in $\mathbb P^{d-1}$ is (real) algebraic if and only if there exists a complex 
algebraic set $Y'$ such that $Y=Y'\cap \mathbb P^{d-1}$.

From the Noetherian property 
of the ring of polynomial functions 
it follows that the sets defined as zeros of infinitely many polynomials are also algebraic.
This implies that the algebraic sets are precisely the closed sets 
of the Zariski topology. 
The set $GL(d,\mathbb C)$  is a Zariski open subset of $\mathbb C^{d^2}$, so we can  equip it with the 
Zariski topology of $\mathbb C^{d^2}$. 
The closure of a set $Y$ in the Zariski topology is denoted by  $\mathrm{Zc}(Y)$.

The limit set of the semigroup $\Gamma_\mu$ is 
a subset of $\mathbb P^{d-1}_{\mathbb C}$ defined as follows: 
\begin{align*}
\Lambda (\Gamma_\mu)=\overline {\{ x_g^+ : g\in \Gamma_\mu, \ g \  \mbox{is proximal} \}}.
\end{align*}
It is well known that $\Lambda (\Gamma_\mu)$ is the smallest nonempty closed $\Gamma_\mu$-invariant set
in the projective space $\mathbb  P^{d-1}_{\mathbb C}$
(which means that any nonempty closed $\Gamma_\mu$-invariant set
in the projective space $\mathbb  P^{d-1}_{\mathbb C}$ contains $\Lambda (\Gamma_\mu)$). 

\begin{lemma} \label{lemma-supp of mes nu}
Assume \ref{Cond strong-irred}, \ref{Cond proxim} and $s\in I_\mu^+$,
or \ref{Cond strong-irred}, \ref{Cond proxim}, \ref{Two sided exponential moment} 
and $s\in (-s_0,0)$ with small enough $s_0>0$. 
Then the support of $\nu_s$ is $\Lambda (\Gamma_\mu)$.
\end{lemma}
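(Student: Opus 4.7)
The plan is to establish the two inclusions $\Lambda(\Gamma_\mu) \subset \supp \nu_s$ and $\supp \nu_s \subset \Lambda(\Gamma_\mu)$ separately: the first via the minimality of the limit set, the second by producing a $P_s$-eigenmeasure concentrated on $\Lambda(\Gamma_\mu)$ and invoking the uniqueness of the probability eigenmeasure of $P_s$ with eigenvalue $\kappa(s)$.

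For $\Lambda(\Gamma_\mu) \subset \supp \nu_s$, I would show that $\supp \nu_s$ is a nonempty closed $\Gamma_\mu$-invariant subset of $\mathbb P^{d-1}$; the inclusion is then immediate from the fact that $\Lambda(\Gamma_\mu)$ is the smallest such set. For the $\Gamma_\mu$-invariance, take any non-negative $\varphi \in \mathcal C(\mathbb P^{d-1})$ vanishing on $\supp \nu_s$. The eigenmeasure identity $\nu_s(P_s \varphi) = \kappa(s)\nu_s(\varphi) = 0$ unfolds as
\[
\int_{\mathbb P^{d-1}} \int_{\mathbb G} e^{s\sigma(g,x)} \varphi(gx) \, d\mu(g) \, d\nu_s(x) = 0.
\]
Since the integrand is continuous and non-negative and $e^{s\sigma(g,x)} > 0$, it must vanish identically on $\supp \mu \times \supp \nu_s$. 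Hence $g \cdot \supp \nu_s \subset \supp \nu_s$ for every $g \in \supp \mu$, and passing to the generated subsemigroup gives $\Gamma_\mu$-invariance.

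For the reverse inclusion, set $\Lambda = \Lambda(\Gamma_\mu)$. Since $\Lambda$ is closed and $\Gamma_\mu$-invariant, the same formula defining $P_s$ yields a bounded linear operator $P_s : \mathcal C(\Lambda) \to \mathcal C(\Lambda)$ which preserves positivity and sends the constant function $1$ to a strictly positive function. Applying Lemma \ref{lemma-existence eigenmes001} with $X = \Lambda$ and $T = P_s\big|_{\mathcal C(\Lambda)}$ produces a probability Borel measure $\nu_s'$ on $\Lambda$ and a constant $\alpha > 0$ with $\nu_s'(P_s \varphi) = \alpha \nu_s'(\varphi)$ for all $\varphi \in \mathcal C(\Lambda)$. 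Extending $\nu_s'$ by zero to $\mathbb P^{d-1}$, the same identity persists on $\mathcal C(\mathbb P^{d-1})$ since $gx \in \Lambda$ whenever $x \in \Lambda$ and $g \in \supp \mu$. To identify $\alpha$ with $\kappa(s)$ I would test against the strictly positive continuous eigenfunction $r_s$: from $P_s r_s = \kappa(s) r_s$ we get $\alpha \nu_s'(r_s) = \nu_s'(P_s r_s) = \kappa(s) \nu_s'(r_s)$, and $\nu_s'(r_s) > 0$ forces $\alpha = \kappa(s)$. The uniqueness of the probability eigenmeasure of $P_s$ with eigenvalue $\kappa(s)$ -- due to \cite{GL16} for $s \in I_\mu^+$ and to Lemma \ref{lemma-uniqueness of eigenmeasure001} for small negative $s$ -- then gives $\nu_s' = \nu_s$, so $\supp \nu_s = \supp \nu_s' \subset \Lambda$.

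The main, quite mild, obstacle is the step identifying $\alpha$ with $\kappa(s)$: Lemma \ref{lemma-existence eigenmes001} delivers an eigenmeasure only for \emph{some} positive eigenvalue, whereas the uniqueness theorems are stated only for the specific eigenvalue $\kappa(s)$. Pairing the extended eigenmeasure with the strictly positive eigenfunction $r_s$ circumvents this, so the whole proof reduces to careful bookkeeping of the constructions already assembled in Section \ref{sec-stationarymes001}.
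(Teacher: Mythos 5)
Your proof is correct, and half of it coincides with the paper's argument: for the inclusion $\supp\nu_s\subset\Lambda(\Gamma_\mu)$ the paper does exactly what you do, namely apply Lemma \ref{lemma-existence eigenmes001} with $X=\Lambda(\Gamma_\mu)$ and $T=P_s$ restricted to $\mathcal C(\Lambda(\Gamma_\mu))$ and then invoke uniqueness of the probability eigenmeasure; your extra step of pairing the resulting eigenmeasure with the strictly positive eigenfunction $r_s$ to force $\alpha=\kappa(s)$ is a point the paper passes over silently (it simply cites ``uniqueness''), so your bookkeeping here is if anything more careful than the original. For the reverse inclusion $\Lambda(\Gamma_\mu)\subset\supp\nu_s$ you take a genuinely different route: you show that $\supp\nu_s$ is a nonempty closed $\Gamma_\mu$-invariant set, using the eigenmeasure relation $\nu_s(P_s\varphi)=\kappa(s)\nu_s(\varphi)$, the strict positivity of $e^{s\sigma(g,x)}$ and a Urysohn-type argument to get $g\,\supp\nu_s\subset\supp\nu_s$ for $g\in\supp\mu$ (and then for all of $\Gamma_\mu$ by continuity of the action and closedness of the support), after which minimality of $\Lambda(\Gamma_\mu)$ finishes the job. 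The paper instead fixes $x\in\supp\nu_s$, uses density of the orbit $\Gamma_\mu x$ in $\Lambda(\Gamma_\mu)$ to find $n$ and $g\in(\supp\mu)^n$ with $gx$ in a given relatively open $U\subset\Lambda(\Gamma_\mu)$, deduces $P_s^n\mathbf 1_U(x)>0$, uses that $\{P_s^n\mathbf 1_U>0\}$ is open to get $\nu_s(P_s^n\mathbf 1_U)>0$, and concludes $\nu_s(U)=\kappa(s)^{-n}\nu_s(P_s^n\mathbf 1_U)>0$; this proves directly that every nonempty relatively open subset of $\Lambda(\Gamma_\mu)$ is charged. Both arguments ultimately rest on the same background fact (minimality of the limit set, equivalently density of orbits in it), which the paper records just before the lemma; your version is the more structural one -- it adapts the classical ``the support of a stationary measure is invariant'' argument to the non-stationary eigenmeasure $\nu_s$ and avoids any semicontinuity discussion of $P_s^n\mathbf 1_U$ -- while the paper's version gives the quantitative by-product $\nu_s(U)>0$ for each open $U$ without needing the Urysohn separation step. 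No gap in your proposal.
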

\begin{proof}
If we apply Lemma \ref{lemma-existence eigenmes001}
with $T = P_s$ and $X=\Lambda (\Gamma_\mu)$, then we obtain that
$P_s$ admits an eigenmeasure which is concentrated on $\Lambda (\Gamma_\mu)$.
By the uniqueness of the eigenmeasure $\nu_s$ 
(see  \eqref{eq-positives-eigenmesure001} and  Lemma \ref{lemma-uniqueness of eigenmeasurestar002}) 
we get $\nu_s(\Lambda (\Gamma_\mu))=1$.
This proves that the support of $\nu_s$ is contained in $\Lambda (\Gamma_\mu)$.

Conversely, let us prove that for any nonempty open subset  $U\subset \Lambda (\Gamma_\mu)$
it holds that $\nu_s(U)>0$. To show this we fix some point $x\in \supp(\nu_s)$.
Since the orbit $\Gamma_\mu x$ is dense in $\Lambda (\Gamma_\mu)$, we can find an integer $n\in \mathbb N$
and $g\in \supp(\mu)^n$  such that $gx\in U$.    
As $\supp(\mu)^n$ is the support of the measure $\mu^{*n}$
we get 
\begin{align*} 
\mathbb P (G_n x \in U)>0.
\end{align*}
This gives 
\begin{align*} 
P_s^n \mathbf 1_{U}  (x) = \mathbb E \left( e^{s\sigma (G_n, x) } \mathbf 1_{\{ G_n x\in U\} } \right) >0.
\end{align*}
Since the function $P_s^n(\mathbf 1_{U} )$ is upper semi-continuous, 
the set 
$$
V=\{x' \in \mathbb P^{d-1}: P_s^n \mathbf 1_{U} (x') > 0 \}
$$ 
is open. 
By assumption $V\cap \supp(\nu_s) $ is nonempty, hence $\nu_s(V)>0$ and $\nu_s(P_s^n\mathbf 1_{U}  ) >0$.    
It follows that $\nu_s(U) = \kappa(s)^{-n} \nu_s(P_s^n\mathbf 1_{U}  ) >0$, as required.
\end{proof}

\begin{remark} \label{rem to lemma-limitset001}
Let $\Gamma_{\mu}^*$ be the adjoint semigroup of $\Gamma_{\mu}$. 
Then assumptions \ref{Cond strong-irred} and \ref{Cond proxim}
hold for $\Gamma_{\mu}^*$ if and only if they hold for $\Gamma_{\mu}$. 
In particular, 
there exists a smallest nonempty closed $\Gamma_{\mu}^*$-invariant subset 
in the dual projective space $(\mathbb P^{d-1})^*$, 
which will be called limit set and denoted by 
$\Lambda(\Gamma_{\mu}^*)$.
\end{remark}

Let $H_{\mu}=\textrm{Zc} (\Gamma_{\mu} )$ be the Zariski closure of $\Gamma_{\mu}$
in $GL(d,\mathbb C)$.
Define  $ X_{\mu} = \textrm{Zc} (\Lambda (\Gamma_\mu) ) = \textrm{Zc} ( \supp  \nu  ).$ 
Note that $X_{\mu}$ is an algebraic subset of 
$\mathbb P^{d-1}_{\mathbb C}$.

\begin{lemma} \label{Lemma-subvarH001}
The algebraic set $X_{\mu}$ is  
the unique closed $H_{\mu}$-orbit
in $\mathbb P^{d-1}_{\mathbb C}$.
In particular, every nonempty Zariski closed $H_\mu$-invariant subset contains $X_\mu$. 
\end{lemma}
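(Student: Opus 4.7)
The plan is to leverage two standard facts: (i) as recalled just before the lemma, $\Lambda(\Gamma_\mu)$ is the smallest nonempty closed $\Gamma_\mu$-invariant subset of $\mathbb{P}^{d-1}_{\mathbb{C}}$; and (ii) any algebraic group acting morphically on a variety admits at least one closed orbit inside every nonempty invariant Zariski closed subset (a standard result from the theory of linear algebraic groups, see e.g.\ Borel's book).

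First I would verify that $X_\mu$ is $H_\mu$-invariant. Since $\Lambda(\Gamma_\mu)$ is $\Gamma_\mu$-invariant by construction, so is its Zariski closure $X_\mu$. For a fixed $g \in GL(d,\mathbb{C})$, the action $x \mapsto gx$ on $\mathbb{P}^{d-1}_{\mathbb{C}}$ is a morphism of algebraic varieties; therefore the set $\{ g \in GL(d,\mathbb{C}) : g X_\mu \subseteq X_\mu \}$ is Zariski closed. As it contains $\Gamma_\mu$, it also contains $\mathrm{Zc}(\Gamma_\mu)=H_\mu$.

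Next I would prove the ``in particular'' clause, which will then be used to conclude the first part. Let $Y$ be any nonempty Zariski closed $H_\mu$-invariant subset of $\mathbb{P}^{d-1}_{\mathbb{C}}$. Then $Y$ is closed in the Euclidean topology and is $\Gamma_\mu$-invariant since $\Gamma_\mu \subseteq H_\mu$. By the minimality property (i), $\Lambda(\Gamma_\mu) \subseteq Y$; and since $Y$ is Zariski closed, $X_\mu = \mathrm{Zc}(\Lambda(\Gamma_\mu)) \subseteq Y$.

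Finally I would conclude that $X_\mu$ is the unique closed $H_\mu$-orbit. Applying (ii) to $H_\mu$ acting on the Zariski closed $H_\mu$-invariant set $X_\mu$, there exists a closed $H_\mu$-orbit $O \subseteq X_\mu$. But $O$ is itself a nonempty Zariski closed $H_\mu$-invariant subset of $\mathbb{P}^{d-1}_{\mathbb{C}}$, so the previous paragraph gives $X_\mu \subseteq O$; hence $X_\mu = O$, so $X_\mu$ is itself an $H_\mu$-orbit. Uniqueness is then automatic: any other closed $H_\mu$-orbit $O'$ would contain $X_\mu = O$ by the same argument, but distinct orbits are disjoint, so $O'=O$. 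The only subtle point I expect is the invocation of (ii), the closed-orbit theorem for algebraic group actions; this is what upgrades the \emph{a priori} description of $X_\mu$ as a finite union of orbits to a single one.
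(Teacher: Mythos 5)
Your proposal is correct and takes essentially the same approach as the paper: both proofs rest on Borel's closed-orbit result, the minimality of $\Lambda(\Gamma_\mu)$ among nonempty closed $\Gamma_\mu$-invariant sets, and the $H_\mu$-invariance of $X_\mu=\mathrm{Zc}(\Lambda(\Gamma_\mu))$. You merely reorganize the argument by proving the ``in particular'' clause first and applying the closed-orbit lemma inside $X_\mu$ rather than to a closed orbit $Y_0=H_\mu x_0$ in $\mathbb{P}^{d-1}_{\mathbb{C}}$, which is an equivalent packaging of the paper's double-inclusion argument.
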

\begin{proof} 
It is a general fact in algebraic geometry 
(we refer to Borel \cite[Corollary 1.8, p.53]{Bor91})
that there exists $x_0\in \mathbb P^{d-1}$ such that  $Y_0=H_{\mu} x_0$ is
a Zariski closed orbit. 
We shall prove that $X_\mu = Y_0$. 

The orbit $Y_0$ is also analytically closed and, 
moreover, it is $\Gamma_{\mu}$-invariant
(if $g\in \Gamma_{\mu}$ then $g Y_0= g H_{\mu} x_0 = \{  g g' x_0: g'\in H_{\mu}  \} \subset Y $,
as $g g' \in H_{\mu}$).
Therefore, the minimal closed $\Gamma_{\mu}$-invariant set $\Lambda (\Gamma_{\mu})$ 
is contained in $Y_0$:
$\Lambda (\Gamma_{\mu}) \subset Y_0$. 
This implies that $X_\mu = \rm{Zc}(\Lambda (\Gamma_{\mu})) \subset \rm{Zc} (Y_0) = Y_0$, 
since $Y_0$ is Zariski closed. 
So we have showed that $X_\mu \subset Y_0$.  
 
Now we prove the converse inclusion $Y_0 \subset X_\mu$. 
Since $X_{\mu}$ is $\Gamma_\mu$-invariant Zariski closed subset, it is also $H_{\mu}$-invariant. 
This means that 
$H_{\mu} X_{\mu} \subset X_{\mu}$.
 We know that for any $x\in Y_0$ it holds that $H_\mu x = Y_0$.
 Since $X_\mu \subset Y_0$, we have that $H_\mu x =  Y_0$ for any  $x\in X_\mu$. 
 This implies that $Y_0 =H_\mu x \subset X_{\mu}$, 
which concludes the proof.   
\end{proof}
%
%
%
%
%
%
%
%
%
%


To avoid any confusion with the notion of strong irreducibility of a set of matrices introduced before,
let us recall the notion of irreducibility of an algebraic subset. 
We say that an algebraic subset $Y$ is irreducible if $Y$ cannot be 
represented as the union of two proper closed algebraic subsets of $Y$.
 
\begin{lemma} \label{Lemma-subvarH002}
The algebraic set $X_{\mu}$ is irreducible. 
\end{lemma}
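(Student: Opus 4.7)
The plan is to argue by contradiction: suppose $X_\mu = X_1 \cup \cdots \cup X_k$ is the decomposition into (complex) irreducible components, with $k \geq 2$. First I would show that $H_\mu$ acts transitively on the finite set $\{X_1,\ldots,X_k\}$. By Lemma \ref{Lemma-subvarH001}, $X_\mu$ is a single $H_\mu$-orbit, so for a generic point $x_j\in X_j\setminus\bigcup_{i\neq j}X_i$ and $x_0\in X_1$ we can write $x_j=gx_0$; since $g X_1$ is an irreducible component containing $x_j$, we must have $g X_1 = X_j$. Moreover, since $H_\mu^0$ is connected and the permutation group $\mathrm{Sym}(\{X_1,\ldots,X_k\})$ is discrete, the kernel $N$ of the $H_\mu$-action on components is a closed normal subgroup of finite index containing $H_\mu^0$. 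Using that any sub-semigroup of a finite group is a subgroup together with the Zariski density of $\Gamma_\mu$ in $H_\mu$, one also obtains transitivity for $\Gamma_\mu$.

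Next, I would apply the construction of Subsection~\ref{sec-finite extensions} with $G=H_\mu$ and $H=N$ to produce the induced probability measure $\mu_N$ on $N$. By Lemma~\ref{lemma 5-7bookBQ}, $\nu$ is $\mu_N$-stationary. A direct computation from the relation $\mu*\nu=\nu$ shows that $(\nu(X_1),\ldots,\nu(X_k))$ is a stationary distribution for the irreducible finite Markov chain induced on $\{X_1,\ldots,X_k\}$ by $\phi_*\mu$, and this stationary distribution is uniform, so $\nu(X_i)=1/k>0$ for all $i$. Since every element of $\mathrm{supp}(\mu_N)\subset N$ preserves each $X_i$ setwise, restricting the identity $\mu_N * \nu = \nu$ to each $X_i$ yields $\mu_N*\nu|_{X_i}=\nu|_{X_i}$, and hence each normalized restriction $\nu_i:=k\,\nu|_{X_i}$ is itself a $\mu_N$-stationary probability measure on $\mathbb{P}^{d-1}$.

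The contradiction would then follow from the uniqueness of the $\mu_N$-stationary probability measure: uniqueness forces $\nu_1=\cdots=\nu_k$, but these probability measures have essentially disjoint supports, so that $\mathrm{supp}\,\nu\subset \bigcap_i X_i$ and hence $X_\mu=\mathrm{Zc}(\mathrm{supp}\,\nu)\subset \bigcap_i X_i\subsetneq X_1$, contradicting $X_1\subset X_\mu$. The main obstacle is this last uniqueness step: one needs to verify that $\Gamma_{\mu_N}$ itself satisfies conditions \ref{Cond strong-irred} and \ref{Cond proxim}, which in turn rests on showing that $\Gamma_{\mu_N}$ is large enough in $N$ (ideally Zariski dense). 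Strong irreducibility can be transferred as follows: any $\Gamma_{\mu_N}$-invariant finite union of proper subspaces, translated by coset representatives of $N$ in $H_\mu$, produces an $H_\mu$-invariant finite union of proper subspaces, contradicting \ref{Cond strong-irred} for $\Gamma_\mu$. Proximality is inherited since sufficiently high powers $g_0^n$ of a proximal element $g_0\in\mathrm{supp}(\mu)$ lie in $N$ and remain proximal, and the return-time construction realizes such a $g_0^n$ inside $\mathrm{supp}(\mu_N)$ by taking the deterministic trajectory $\omega=(g_0,g_0,\ldots)$.
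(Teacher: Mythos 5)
Your route is genuinely different from the paper's: the paper does not argue by contradiction on components at all, but passes directly to the identity component $H_\mu^0$, forms the induced measure $\mu_0$ on it (Section \ref{sec-finite extensions}), uses Furstenberg uniqueness to identify the $\mu_0$-stationary measure with $\nu$, and then concludes that $X_\mu$ is a closed $H_\mu^0$-orbit, hence irreducible as the image of a connected group under an orbit map. Several steps of your plan are sound: transitivity of $H_\mu$ (and then $\Gamma_\mu$) on the components, the fact that $\nu$ restricted to a component fixed by $N$ is again $\mu_N$-stationary, and the final contradiction; in fact the Markov-chain/uniform-distribution step is dispensable, since it suffices to pick one component $X_i$ with $\nu(X_i)>0$ (such exists because $\nu(X_\mu)=1$): uniqueness would force $\nu=\nu_i$, hence $X_\mu=\mathrm{Zc}(\supp\nu)\subset X_i$, a contradiction.

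The genuine gaps are exactly at the point you flag as the main obstacle, namely verifying \ref{Cond strong-irred} and \ref{Cond proxim} for $\Gamma_{\mu_N}$. (i) The coset-translation argument for strong irreducibility does not work as stated: if $W$ is a $\Gamma_{\mu_N}$-invariant finite union of proper subspaces and $h_1,\dots,h_m$ are coset representatives of $N$ in $H_\mu$, then to check that $\bigcup_j h_j W$ is $\Gamma_\mu$-invariant you must move $g\in\Gamma_\mu$ past a representative, i.e.\ write $g h_j = h_{j'} n$ with $n\in N$; but $n$ need not lie in $\Gamma_{\mu_N}$ (and inverses of words are unavailable in a semigroup), so the invariance of $W$ cannot be applied. (ii) Condition \ref{Cond proxim} only provides a proximal element of $\Gamma_\mu=[\supp\mu]$, not of $\supp\mu$, so the deterministic trajectory $(g_0,g_0,\dots)$ is not available as written. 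Both gaps can be closed: every product of elements of $\supp\mu$ lying in $N$ decomposes at its successive visits to $N$ into first-return blocks, so such products belong to $\Gamma_{\mu_N}$; this subsemigroup is Zariski dense in $N\supset H_\mu^0$ (its closure, together with the closures of the word sets in the other cosets, must exhaust $H_\mu$), and since strong irreducibility depends only on the identity component of the Zariski closure, it is inherited; for proximality, approximate the proximal element of $\Gamma_\mu$ by a proximal word $w$ (proximality is an open condition) and use the power $w^m\in N$. This is essentially how the paper handles the analogous point for $\mu_0$, by invoking $\Gamma_{\mu_0}=\Gamma_\mu\cap H_\mu^0$ together with the standard facts that finite-index subsemigroups of strongly irreducible semigroups are strongly irreducible and that positive powers of proximal elements are proximal.
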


\begin{proof} 
Let $H_{\mu}^0$ be the Zariski connected component of $H_{\mu}$ which contains the unit matrix $\bf e$.
Note that $H_{\mu}^0$ is a finite index Zariski closed subgroup of $H_{\mu}$. 

Let $\mu_0$ be the measure on $H_{\mu}^0$ 
induced by the measure $\mu$ through the mapping $\mathfrak f$ as explained in 
Section \ref{sec-finite extensions}.
By Lemma \ref{lemma 5-7bookBQ}, 
the probabilty measure 
$\nu$  defined by \eqref{mu station meas} is  $\mu_0$-stationary.
By the construction of $\mu_0$ we have $\Gamma_{\mu_0}= \Gamma_\mu \cap H_\mu^0$.
Since any finite index subgroup of a strongly irreducible 
group is still strongly irreducible,
 it follows that $\Gamma_{\mu_0}$ is strongly irreducible.
 It is also proximal since any positive power of an element of $\Gamma_\mu$ 
 is also proximal.
By the way, we note that the irreducibility is not necessarily preserved. 
By the result of Furstenberg \cite{Furst BTSPHS-73} the measure $\mu_0$ has a unique stationary 
probability measure $\nu_0$ on $\mathbb P^{d-1}$
 which (by uniqueness) coincides with 
$\nu$.
This implies that $X_{\mu} 
=X_{\mu_{0}} = \rm{Zc} (\supp \nu_0) = \rm{Zc} (\Lambda (\Gamma_{\mu_0}))$.

Applying Lemma \ref{Lemma-subvarH001} with $\mu_0$ instead of $\mu$, 
we conclude that 
$X_{\mu} = X_\mu^0$ 
is an $H_{\mu}^0$-orbit in $\mathbb P^{d-1}$.
Since $H_{\mu}^0$ is connected (and therefore irreducible as an algebraic set), 
we conclude that $X_\mu$ is irreducible as the image of $H_{\mu}^0$ by the Zariski continuous mapping 
$g\in H_{\mu}^0 \mapsto g x_0 \in  X_\mu$.
\end{proof}

\subsection{The maximum principle}
We shall use repeatedly the following simple fact which we call maximum principle. 
\begin{lemma} \label{lemma-Princ-max}
Let $\phi : (X,P) \mapsto \mathbb R$ be a measurable function on the measurable space $X$ equipped 
with the probability measure $P$ which satisfies 
$\phi(x) \leq \beta$, $P$-a.s.\ and $\int \phi (x) P(dx)  = \beta,$ 
where $\beta \in \mathbb R $ is a real number. 
Then $\phi =\beta$, $P$-a.s.  
\end{lemma}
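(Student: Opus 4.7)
The plan is to reduce this to the standard fact that a non-negative measurable function with zero integral vanishes almost everywhere. Concretely, I would introduce the auxiliary function $\psi(x) = \beta - \phi(x)$ and observe that by the hypothesis $\phi \leq \beta$ ($P$-a.s.) we have $\psi \geq 0$ ($P$-a.s.), while by linearity of the integral and the assumption $\int \phi \, dP = \beta$ we have $\int \psi \, dP = \beta - \beta = 0$.

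Next, I would invoke the elementary fact that a non-negative measurable function whose integral against a probability measure is zero must itself be zero $P$-a.s. If one prefers a self-contained argument, this follows by decomposing $\{\psi > 0\} = \bigcup_{n \geq 1} \{\psi \geq 1/n\}$ and applying the Markov inequality
\[
P\bigl(\psi \geq 1/n\bigr) \leq n \int \psi \, dP = 0,
\]
so that $P(\psi > 0) = 0$ by countable subadditivity. Hence $\psi = 0$, i.e.\ $\phi = \beta$, holds $P$-a.s.

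There is no real obstacle here: the statement is essentially a one-line consequence of positivity of the integral, and the only care required is to combine the two hypotheses in the right order (the almost-sure upper bound is what makes $\beta - \phi$ non-negative, and the equality of the integral to $\beta$ is what forces that non-negative quantity to have zero mean). I do not expect any step to be delicate.
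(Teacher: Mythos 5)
Your argument is correct: setting $\psi = \beta - \phi \geq 0$ ($P$-a.s.), noting $\int \psi \, dP = 0$, and applying the Markov inequality on the sets $\{\psi \geq 1/n\}$ is exactly the standard proof of this fact. The paper itself states the lemma as a ``simple fact'' without giving any proof, so your write-up supplies the canonical elementary argument and there is nothing to reconcile; the only (trivial) care is that $\psi \geq 0$ only holds off a null set, which your phrasing already accommodates since the Markov bound can be applied after modifying $\psi$ on that null set.
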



\section{Proof of Theorem \ref{Lemma-Fursten-set001}} \label{section 2}

We shall prove the following statement which implies Theorem \ref{Lemma-Fursten-set001}. 
First we recall that by the definition of $X_{\mu}$ (see Section \ref{subsec-support}) we have 
$ X_{\mu} = \textrm{Zc} ( \supp  \nu  ),$ so that $\nu( X_\mu)=1$.

\begin{proposition} \label{lemma-Y_0} 
Assume conditions \ref{Cond strong-irred} and \ref{Cond proxim}. 
Then, 
for any proper algebraic subset $Y$ of 
$X_\mu$ it holds $\nu( Y)=0$.
\end{proposition}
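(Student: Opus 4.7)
The plan is to argue by contradiction. Suppose that some proper algebraic subset of $X_\mu$ has positive $\nu$-measure, and set $\beta := \sup\{\nu(Y'): Y' \subsetneq X_\mu \text{ algebraic}\} > 0$. The goal is to exhibit a nonempty proper algebraic subset of $X_\mu$ that is stable under the full group $H_\mu = \mathrm{Zc}(\Gamma_\mu)$, which is forbidden by Lemma \ref{Lemma-subvarH001}.

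First I would show that $\beta$ is attained. Using the irreducibility of $X_\mu$ (Lemma \ref{Lemma-subvarH002}), any finite union of proper algebraic subsets of $X_\mu$ is again proper; coupled with a descending induction on dimension, this lets one produce an algebraic $Y^* \subsetneq X_\mu$ with $\nu(Y^*) = \beta$. Let $\mathcal{M} := \{Y \subsetneq X_\mu \text{ algebraic}: \nu(Y) = \beta\}$ be the resulting nonempty family of maximizers. For $Y_1, Y_2 \in \mathcal{M}$, irreducibility of $X_\mu$ gives $Y_1 \cup Y_2 \subsetneq X_\mu$, so $\nu(Y_1 \cup Y_2) \leq \beta$, and inclusion--exclusion yields $\nu(Y_1 \cap Y_2) \geq \beta + \beta - \beta = \beta$, hence $Y_1 \cap Y_2 \in \mathcal{M}$. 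The Noetherianity of the Zariski topology then delivers a unique minimum element $Y_0 \in \mathcal{M}$.

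Next I would invoke stationarity: $\beta = \nu(Y_0) = \int_{\mathbb{G}} \nu(g^{-1}Y_0)\,\mu(dg)$. Since $g X_\mu = X_\mu$ for every $g \in H_\mu$ (Lemma \ref{Lemma-subvarH001}), each $g^{-1}Y_0$ is a proper algebraic subset of $X_\mu$ and thus $\nu(g^{-1}Y_0) \leq \beta$. The maximum principle (Lemma \ref{lemma-Princ-max}) then forces $\nu(g^{-1}Y_0) = \beta$ for $\mu$-almost every $g$, so $g^{-1}Y_0 \in \mathcal{M}$ $\mu$-a.s., and minimality of $Y_0$ gives $Y_0 \subset g^{-1}Y_0$, equivalently $gY_0 \subset Y_0$, for $\mu$-a.e.\ $g$. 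The set $\{g \in \mathbb{G}: gY_0 \subset Y_0\}$ is Zariski closed and is a closed subsemigroup of $\mathbb{G}$; carrying full $\mu$-mass, it contains $\supp \mu$, hence $\Gamma_\mu$, and finally $H_\mu = \mathrm{Zc}(\Gamma_\mu)$. Therefore $Y_0$ is a nonempty Zariski closed $H_\mu$-invariant subset of $\mathbb{P}^{d-1}_{\mathbb{C}}$, and Lemma \ref{Lemma-subvarH001} forces $Y_0 \supset X_\mu$, contradicting $Y_0 \subsetneq X_\mu$.

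The main obstacle is the attainment of $\beta$ at the very first step: Noetherianity of the Zariski topology gives the descending chain condition but not the ascending one, so a maximizing sequence $Y_n \subsetneq X_\mu$ with $\nu(Y_n) \to \beta$ need not stabilize to an algebraic limit. To handle this I would run the whole argument inductively on the dimension of $Y$. Once the statement is known for proper algebraic subsets of dimension $< k$, any two distinct irreducible $k$-dimensional subvarieties of $X_\mu$ meet in dimension $< k$ and are thus $\nu$-disjoint; hence only countably many such subvarieties can carry positive $\nu$-mass (their masses summing to at most $1$), which is enough regularity to realize the maximum at each dimension level and push the above argument through.
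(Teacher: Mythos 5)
Your overall skeleton --- stationarity plus the maximum principle (Lemma \ref{lemma-Princ-max}) to show that a canonical extremal algebraic set is $\mu$-a.s.\ invariant, then upgrading invariance to $\Gamma_\mu$ and to $H_\mu=\mathrm{Zc}(\Gamma_\mu)$ and contradicting Lemma \ref{Lemma-subvarH001} --- is the paper's strategy, and your endgame (the set $\{g\in\mathbb G: gY_0\subset Y_0\}$ is a closed, Zariski closed subsemigroup of full $\mu$-mass) is fine. The genuine gap is the one you flagged yourself, and your patch does not close it. The unique-minimum device requires the maximizer family $\mathcal M$ to be taken over \emph{all} proper algebraic subsets of $X_\mu$ (so that $Y_1\cup Y_2$ is still proper and $Y_1\cap Y_2$ is again a maximizer); but for that class the supremum $\beta$ need not be attained, and countability does not give attainment: if infinitely many irreducible subvarieties of the critical dimension carried positive masses $m_1\geq m_2\geq\dots$, with pairwise intersections null, the supremum of $\nu$ over proper algebraic subsets would be $\sum_j m_j$, approached by finite unions but never realized, since an algebraic set has only finitely many irreducible components. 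Countability yields attainment only when you fix the dimension and restrict to \emph{irreducible} subvarieties: at the minimal dimension $d_0$ carrying positive mass, only finitely many irreducible $Y$ with $\dim Y=d_0$ satisfy $\nu(Y)\geq c$, so the maximum over that class is attained. But on that class your intersection/minimum trick collapses: two distinct maximizers intersect in dimension $<d_0$, hence in a $\nu$-null set which is not a maximizer, so the family is not stable under intersection and there is no canonical minimal element. Thus the attainment step and the intersection-stability step cannot both hold for the same class of sets.

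The missing idea is how the paper replaces your single $Y_0$: at dimension $d_0$ the set $\mathcal W$ of irreducible maximizers is finite (Lemma \ref{lemma-ccc100aa}); the stationarity/maximum-principle step shows $g^{-1}\mathcal W=\mathcal W$ first for $\mu$-a.e.\ $g$ and then, by closedness and finiteness, for all $g\in\Gamma_\mu$ (Lemma \ref{lemma-ccc104aa}); one then takes the \emph{union} $Z=\bigcup_{Y\in\mathcal W}Y$, a Zariski closed $\Gamma_\mu$-invariant, hence $H_\mu$-invariant, subset, so that Lemma \ref{Lemma-subvarH001} gives $X_\mu\subset Z$ and the irreducibility of $X_\mu$ (Lemma \ref{Lemma-subvarH002}) forces $X_\mu\in\mathcal W$, i.e.\ $\dim X_\mu=d_0$, from which the proposition follows by decomposing an arbitrary positive-measure $Y$ into irreducible components. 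So your proof can be completed, but only by restructuring the extremal step along these lines (finite invariant family and its union, rather than a unique minimal maximizer).
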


%


We need to recall some elementary notions from  algebraic geometry 
(see, for instance Borel \cite{Bor91}). 
We say that a topological space $X$ is irreducible   
if and only if $X$ cannot be written  as $X=X_1 \cup X_2$, where $X_1$ and $X_2$ 
are proper closed subsets of $X$.
Recall that the topological space $X$ is Noetherian
if every non-increasing sequence of closed subsets is eventually constant. 
Any Noetherian topological space can be written as 
finite union of closed irreducible subsets. 

The (combinatorial) dimension of a Noetherian topological space $X$
is the maximum length $l$ of sequences $X_0\subsetneq \ldots \subsetneq \ldots X_l$ of 
distinct irreducible closed sets in $X$:
\begin{align*} 
\dim (X) = \sup \{ l:   X_0\subsetneq \ldots \subsetneq \ldots X_l \subsetneq X \} \in \mathbb N \cup \{\infty\}.
\end{align*}
From this definition it is obvious that if $X$ is irreducible and has finite dimension then any proper closed 
subset of $X$ has strictly smaller dimension. 

It is known  that the projective space $\mathbb P^{d-1}_{\mathbb C}$ is irreducible and Noetherian for the Zariski topology 
with $\dim(\mathbb P^{d-1}_{\mathbb C}) = d-1.$
In the following by the dimension of an algebraic subset of $\mathbb P^{d-1}$ we mean the combinatorial dimension.  

Denote by $\mathfrak A (X_\mu)$ the set of irreducible 
algebraic subsets $Y$ of $X_{\mu}$. 
%
%
Set
\begin{align*} 
d_0 = \min \{ 1\leq r \leq d : 
Y\in \mathfrak A(X_\mu),\ \dim(Y)=r,\  
\nu(Y)>0   \}.
\end{align*}
It is easy to see that $d_0 \leq \dim( X_{\mu}) \leq d-1$. 

\begin{lemma} \label{lemma-algsubs0}
Let $Y$ be an algebraic subset of $\mathbb P^{d-1}_{\mathbb C}$
with $\dim(Y)< d_0$. Then $\nu(Y)=0$.  
\end{lemma}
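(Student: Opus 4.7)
My plan is to reduce $Y$ to its irreducible components intersected with $X_\mu$ and then invoke the minimality in the definition of $d_0$ on each component.

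First, since $X_\mu=\mathrm{Zc}(\supp\nu)$ has $\nu$-mass one, I would replace $Y$ by $Y\cap X_\mu$ without changing the measure: $\nu(Y)=\nu(Y\cap X_\mu)$. The intersection $Y\cap X_\mu$ is again algebraic in $\mathbb P^{d-1}_{\mathbb C}$, and by the Noetherian property of the Zariski topology it admits a finite decomposition $Y\cap X_\mu=Z_1\cup\cdots\cup Z_k$ into irreducible components. Each $Z_i$ is an irreducible algebraic subset of $X_\mu$, hence $Z_i\in\mathfrak A(X_\mu)$, and satisfies $\dim(Z_i)\leq \dim(Y\cap X_\mu)\leq\dim(Y)<d_0$.

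I would then conclude componentwise. If $1\leq \dim(Z_i)<d_0$, the very minimality in the definition of $d_0$ forces $\nu(Z_i)=0$, since otherwise $\dim(Z_i)$ would be an admissible, strictly smaller value in the set defining $d_0$. If instead $\dim(Z_i)=0$, then $Z_i$ is a finite set of complex points, so $Z_i\cap\mathbb P^{d-1}$ is a finite (possibly empty) collection of real points, each of which is a proper projective subspace of $\mathbb P^{d-1}$; Furstenberg's Theorem~\ref{Lemma-Fursten-set} then gives $\nu(Z_i)=0$. Summing over the finitely many components yields $\nu(Y)=\nu(Y\cap X_\mu)\leq\sum_{i=1}^k\nu(Z_i)=0$.

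The only mildly delicate point is the treatment of the zero-dimensional components, which the range $1\leq r\leq d$ in the definition of $d_0$ does not cover directly; but Furstenberg's theorem closes that gap immediately. Beyond that, the argument is a routine componentwise application of the definition of $d_0$ and does not require any new input from the theory of stationary measures.
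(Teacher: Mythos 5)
Your proof is correct and takes essentially the same route as the paper, which decomposes $Y$ into finitely many irreducible components (via Proposition 1.5 of Hartshorne) and concludes from the minimality defining $d_0$. The paper's version is terser; your two refinements --- first intersecting with $X_\mu$ so that the components genuinely lie in $\mathfrak A(X_\mu)$, and disposing of zero-dimensional components (not covered by the range $1\leq r\leq d$ in the definition of $d_0$) via Furstenberg's Theorem \ref{Lemma-Fursten-set} --- are sound and fill in details the paper leaves implicit.
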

\begin{proof}
Write $Y$ as the union of 
irreducible algebraic subsets $Y_1,\ldots, Y_r$ of  $\mathbb P^{d-1}_{\mathbb C}$: 
$Y=Y_1\cup \ldots \cup Y_r $ (see Proposition 1.5 of Hartshorne \cite{hartshorne}). 
As each $Y_k$ has dimension strictly less that $d_0$,
we have $\nu(Y_k) =0 $, by the definition of $d_0$. So $\nu(Y)=0$.
\end{proof}

We will establish the following assertion,
which is the key point in the proof of Theorem \ref{Lemma-Fursten-set001}.

 \begin{proposition} \label{lemma-dimension001aa}
The dimension of $X_{\mu}$ is $d_0$:  $\dim( X_{\mu}) = d_0$.
\end{proposition}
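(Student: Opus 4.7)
The plan is to show that the finite union of $d_0$-dimensional irreducible algebraic subsets of $X_\mu$ which carry maximal $\nu$-mass is $H_\mu$-invariant, hence equals $X_\mu$ by Lemma~\ref{Lemma-subvarH001}; since $X_\mu$ is irreducible by Lemma~\ref{Lemma-subvarH002}, it must then coincide with one of those components, forcing $\dim(X_\mu) = d_0$.

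First I would observe that any two distinct irreducible algebraic subsets of $X_\mu$ of dimension $d_0$ meet in a proper closed subset of one of them, hence in a set of dimension strictly less than $d_0$; Lemma~\ref{lemma-algsubs0} then gives zero $\nu$-mass to the intersection. Consequently the family $\mathcal{Y}$ of irreducible algebraic subsets $Y \subset X_\mu$ with $\dim Y = d_0$ and $\nu(Y) > 0$ is at most countable, and only finitely many of its elements can realize the supremum $\beta := \sup_{Y \in \mathcal{Y}} \nu(Y)$. Call this finite set $\mathcal{F} = \{Y_1^{*}, \ldots, Y_k^{*}\}$. Next I would exploit the stationarity identity $\nu(A) = \int \nu(g^{-1}A)\,\mu(dg)$. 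For $g \in \Gamma_\mu$ the inclusion $g X_\mu \subset X_\mu$, combined with the irreducibility of $X_\mu$ and a dimension count, forces $g X_\mu = X_\mu$ and hence $g^{-1}X_\mu = X_\mu$. Thus for any $Y \in \mathcal{F}$ and $\mu$-a.e.\ $g$, the set $g^{-1}Y$ is an irreducible algebraic subset of $X_\mu$ of dimension $d_0$, and therefore $\nu(g^{-1}Y) \leq \beta$. Applying the maximum principle (Lemma~\ref{lemma-Princ-max}) to the identity $\beta = \int \nu(g^{-1}Y)\,\mu(dg)$, I would conclude $\nu(g^{-1}Y) = \beta$ for $\mu$-a.e.\ $g$, so that $g^{-1}Y \in \mathcal{F}$.

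To upgrade this $\mu$-a.e.\ statement into genuine $H_\mu$-invariance, I would introduce
$$S_0 := \{g \in \mathbb{G} : g^{-1}\mathcal{F} = \mathcal{F}\},$$
which decomposes as the finite union, over permutations $\sigma$ of $\{1,\ldots,k\}$, of the Zariski closed sets $\{g : g^{-1}Y_i^{*} = Y_{\sigma(i)}^{*},\ 1 \leq i \leq k\}$; in particular $S_0$ is Zariski closed, and it is clearly a subsemigroup. Since $\mu(S_0) = 1$ one has $\Gamma_\mu \subset S_0$, and taking Zariski closures yields $H_\mu \subset S_0$. Therefore the algebraic set $Z := \bigcup_{Y \in \mathcal{F}} Y$ satisfies $g Z = Z$ for every $g \in H_\mu$, and by Lemma~\ref{Lemma-subvarH001}, $X_\mu \subset Z \subset X_\mu$, i.e.\ $Z = X_\mu$. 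Irreducibility of $X_\mu$ forces $X_\mu = Y_i^{*}$ for some $i$, and in particular $\dim X_\mu = d_0$.

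The main obstacle I anticipate is precisely the passage from ``$g^{-1}Y \in \mathcal{F}$ for $\mu$-a.e.\ $g$'' to full $H_\mu$-invariance of $Z$: without recognizing $S_0$ as a Zariski closed subsemigroup (so that $\mathrm{supp}\,\mu \subset S_0$ upgrades to $H_\mu \subset S_0$), the $\nu$-mass balance alone does not give the geometric invariance needed to invoke Lemma~\ref{Lemma-subvarH001}. The remaining ingredients---the countability of $\mathcal{Y}$, the attainment of $\beta$, the identity $g X_\mu = X_\mu$, and the final irreducibility step---are either combinatorial or direct consequences of the lemmas already established.
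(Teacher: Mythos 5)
Your proposal is correct and follows essentially the same route as the paper: maximal-mass irreducible $d_0$-dimensional subsets form a finite family (pairwise intersections being $\nu$-null by Lemma~\ref{lemma-algsubs0}), stationarity plus the maximum principle (Lemma~\ref{lemma-Princ-max}) shows this family is permuted, Zariski closedness of its stabilizer upgrades this to $H_\mu$-invariance of the union, and Lemmas~\ref{Lemma-subvarH001} and~\ref{Lemma-subvarH002} then force $X_\mu$ to equal one member of the family. Your explicit treatment of the stabilizer $S_0$ and of the identity $gX_\mu=X_\mu$ merely spells out steps the paper handles via Zariski density and Lemma~\ref{lemma-ccc104aa}; the only point worth stating explicitly is that the supremum $\beta$ is attained (as in Lemma~\ref{lemma-beta attained001}), which follows at once from your finiteness observation applied to $\{Y:\nu(Y)\geq\beta/2\}$.
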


The proof of this proposition requires some additional assertions.  For any $c>0$ set
$$
\mathcal W_0(c) = \{ Y\in \mathfrak A(X_\mu) : \ \dim(Y)= d_0, \  \nu(Y) \geq c \}.
$$
We start by showing that this set is finite.
\begin{lemma} \label{lemma-ccc100aa}
For any $c>0$ the set $\mathcal W_0(c)$ is finite. Moreover ${\rm card\,} \mathcal W_0(c)  \leq c^{-1}.$ 
\end{lemma}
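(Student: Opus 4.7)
The plan is to exploit the almost-disjointness of distinct members of $\mathcal W_0(c)$ with respect to $\nu$, and then apply a counting argument based on the fact that $\nu$ is a probability measure.

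First I would take two distinct elements $Y_1, Y_2 \in \mathcal W_0(c)$ and study their intersection $Y_1 \cap Y_2$. This is an algebraic subset of $X_\mu$ contained in $Y_1$. Since $Y_1$ is irreducible of dimension $d_0$ and $Y_1 \neq Y_2$, the intersection is a \emph{proper} closed subset of $Y_1$: indeed, if one had $Y_1 \subseteq Y_2$, then by irreducibility of $Y_2$ and the equality $\dim(Y_1) = \dim(Y_2) = d_0$, we would get $Y_1 = Y_2$. Because $Y_1$ is irreducible, any proper closed algebraic subset has strictly smaller combinatorial dimension, so $\dim(Y_1 \cap Y_2) < d_0$. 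Lemma \ref{lemma-algsubs0} then gives $\nu(Y_1 \cap Y_2) = 0$.

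Now, given any finite collection of distinct elements $Y_1, \ldots, Y_n \in \mathcal W_0(c)$, the pairwise intersections all have $\nu$-measure zero by the previous step, so by inclusion-exclusion (or simply by the union bound for the complements in each $Y_i$ of the union of the pairwise intersections)
\begin{align*}
\nu\Bigl( \bigcup_{i=1}^n Y_i \Bigr) = \sum_{i=1}^n \nu(Y_i) \geq nc.
\end{align*}
Since $\nu$ is a probability measure, the left-hand side is at most $1$, hence $n \leq 1/c$. Taking the supremum over such $n$ gives $\operatorname{card}\mathcal W_0(c) \leq c^{-1}$, which in particular forces $\mathcal W_0(c)$ to be finite.

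There is no serious obstacle here: the argument is essentially bookkeeping once one observes that distinct irreducible subsets of the same minimal dimension meet in a set of strictly smaller dimension, and hence of zero $\nu$-measure by Lemma \ref{lemma-algsubs0}. The only subtle point is the irreducibility argument showing that $\dim(Y_1 \cap Y_2) < d_0$, which relies on $Y_1$ being irreducible of combinatorial dimension $d_0$.
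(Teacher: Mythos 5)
Your proof is correct and follows essentially the same route as the paper: pairwise intersections of distinct elements of $\mathcal W_0(c)$ have dimension strictly less than $d_0$, hence $\nu$-measure zero by Lemma \ref{lemma-algsubs0}, and then the bound $\nu(Y_1\cup\cdots\cup Y_r)\geq cr\leq 1$ gives $r\leq c^{-1}$. The only difference is that you spell out the (correct) irreducibility argument for why the intersection is a proper closed subset of strictly smaller dimension, which the paper leaves implicit.
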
 
\begin{proof}
Let $Y_1,\cdots,Y_r$ be two by two distinct elements of $W_0(c)$.
For any $1\leq i< j\leq r$, the intersection 
$Y_i \cap Y_j $ is an algebraic subset of dimension strictly smaller than
$d_0$. 
By Lemma \ref{lemma-algsubs0} we have $\nu(Y_i \cap Y_j) = 0.$
This implies that $\nu(Y_1 \cup\cdots \cup Y_r) \geq c r$, so $r \leq c^{-1}$.
\end{proof}
Set
\begin{align} \label{eq-beta001}
\beta = \sup \{ \nu(Y) : 
Y\in \mathfrak A(X_\mu),\ \dim(Y)= d_0  \}.
\end{align}
In the following it is important to show that the supreme in \eqref{eq-beta001} is attained. 
\begin{lemma} \label{lemma-beta attained001}
The following maximum is attained,
\begin{align*} 
\beta = \max \{ \nu(Y) : 
Y\in \mathfrak A(X_\mu),\ \dim(Y)= d_0  \}.
\end{align*}
\end{lemma}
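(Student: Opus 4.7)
The plan is to exploit the finiteness statement of Lemma \ref{lemma-ccc100aa} to reduce the supremum to a maximum over a finite set. First I would verify that $\beta$ is strictly positive: by the very definition of $d_0$, there exists at least one irreducible algebraic subset $Y \in \mathfrak A(X_\mu)$ with $\dim(Y) = d_0$ and $\nu(Y) > 0$, so $\beta \geq \nu(Y) > 0$.

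Next, by the definition of $\beta$ as a supremum, one can select a sequence $(Y_n)_{n \geq 1}$ in $\mathfrak A(X_\mu)$ with $\dim(Y_n) = d_0$ for every $n$ and $\nu(Y_n) \to \beta$ as $n \to \infty$. Choose any constant $c$ with $0 < c < \beta$. Since $\nu(Y_n) \to \beta > c$, for $n$ large enough we have $\nu(Y_n) \geq c$, which means $Y_n \in \mathcal W_0(c)$ for every sufficiently large $n$. By Lemma \ref{lemma-ccc100aa}, the set $\mathcal W_0(c)$ is finite, so the tail of the sequence $(Y_n)$ takes values in a finite set, and consequently the tail of the sequence of real numbers $(\nu(Y_n))$ takes values in the finite set $\{\nu(Y) : Y \in \mathcal W_0(c)\}$.

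A convergent sequence whose values lie in a finite subset of $\mathbb R$ must be eventually constant, and its limit is one of the values of the set. Therefore there exists $Y \in \mathcal W_0(c) \subset \mathfrak A(X_\mu)$ with $\dim(Y) = d_0$ and $\nu(Y) = \beta$, which proves that the supremum in the definition of $\beta$ is attained. There is no real obstacle here; the entire argument rests on having already proved the counting bound $\mathrm{card}\,\mathcal W_0(c) \leq c^{-1}$, which in turn depends on the minimality of $d_0$ ensuring that pairwise intersections of distinct elements of $\mathfrak A(X_\mu)$ of dimension $d_0$ are $\nu$-null.
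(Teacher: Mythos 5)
Your proof is correct and follows essentially the same route as the paper: both arguments use the positivity of $\beta$ (guaranteed by the definition of $d_0$) to reduce the supremum to one over the finite set $\mathcal W_0(c)$ from Lemma \ref{lemma-ccc100aa}, where it is trivially attained. Your maximizing-sequence phrasing is only a cosmetic variation of the paper's direct observation that $\beta = \sup\{\nu(Y) : Y \in \mathcal W_0(\beta/2)\}$.
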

\begin{proof}
As $\beta >0$, we have 
\begin{align*} 
\beta = \sup \{ \nu(Y) : Y\in \mathcal W_0 (\beta/2) \}.
\end{align*}
By Lemma \ref{lemma-ccc100aa}, the set $\mathcal W_0(\beta/2)$
is finite. Therefore, the supreme is attained. 
\end{proof}

All the algebraic sets $Y\in \mathfrak A (X_\mu)$ for which the maximum $\beta$ is realized
are collected in the set
\begin{align*} 
\mathcal W = \{ Y\in \mathfrak A(X_\mu) : \ \dim(Y)= d_0, \  \nu(Y) = \beta \},
\end{align*}
which is finite by Lemma \ref{lemma-ccc100aa} (as a subset of $\mathcal W_0(\beta)$). 

\begin{lemma} \label{lemma-ccc104aa}
The set $\mathcal W $ is $\Gamma_\mu$-invariant, which means that 
for any $g\in \Gamma_\mu$ and any $Y\in \mathcal W$ we have that $g Y \in \mathcal W$. 
\end{lemma}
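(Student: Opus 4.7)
The plan is to exploit the stationarity of $\nu$ through the maximum principle to show first that $\mathcal W$ is stable under the map $Y \mapsto h^{-1}Y$ for $\mu$-a.e.\ $h$, and then to upgrade this to every $g \in \Gamma_\mu$ by a closure and semigroup argument. As a preliminary reduction I would form the union $Z = \bigcup_{Y \in \mathcal W} Y$. Since distinct elements $Y_i, Y_j \in \mathcal W$ are irreducible algebraic subsets of $X_\mu$ of the same dimension $d_0$, their intersection is a proper Zariski closed subset of $Y_i$ and therefore has dimension strictly less than $d_0$. Lemma \ref{lemma-algsubs0} then makes all pairwise intersections $\nu$-null, so that $\nu(Z) = |\mathcal W|\,\beta$.

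Next I would bound $\nu(h^{-1}Z)$ from above by $\nu(Z)$ for $h \in \supp\mu \subset \Gamma_\mu$. By Lemma \ref{Lemma-subvarH001} the set $X_\mu$ is $H_\mu$-invariant, and since $h$ is invertible this gives $h^{-1}X_\mu = X_\mu$. Hence each $h^{-1}Y$ is an irreducible algebraic subset of $X_\mu$ of dimension $d_0$, which forces $\nu(h^{-1}Y) \leq \beta$ by the definition of $\beta$. The pairwise intersections of the $h^{-1}Y$'s are again $\nu$-null by the same argument, so
\[
\nu(h^{-1}Z) \;=\; \sum_{Y \in \mathcal W}\nu(h^{-1}Y) \;\leq\; |\mathcal W|\,\beta \;=\; \nu(Z).
\]
Combining this with the stationarity identity $\nu(Z) = \int_{\mathbb G}\nu(h^{-1}Z)\mu(dh)$ and applying the maximum principle (Lemma \ref{lemma-Princ-max}) yields $\nu(h^{-1}Z) = \nu(Z)$ for $\mu$-a.e.\ $h$, and therefore $\nu(h^{-1}Y) = \beta$ for every $Y \in \mathcal W$ and $\mu$-a.e.\ $h$. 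For such $h$, the map $T_h : Y \mapsto h^{-1}Y$ is a well-defined injective self-map of the finite set $\mathcal W$, hence a permutation; equivalently, $hY \in \mathcal W$ for every $Y \in \mathcal W$ and $\mu$-a.e.\ $h$.

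Finally, I would promote the $\mu$-a.e.\ statement to every $g \in \Gamma_\mu$ by defining
\[
M \;=\; \{\, h \in \mathbb G : hY \in \mathcal W \text{ for every } Y \in \mathcal W \,\} \;=\; \bigcap_{Y \in \mathcal W}\, \bigcup_{Y' \in \mathcal W}\, \{\,h \in \mathbb G : hY = Y'\,\}.
\]
Each constituent $\{h : hY = Y'\}$ is Zariski closed in $\mathbb G$, hence closed in the usual topology, so $M$ itself is closed. Moreover $M$ is a semigroup, since $g, h \in M$ and $Y \in \mathcal W$ give $(gh)Y = g(hY) \in \mathcal W$. As $M$ is closed and has full $\mu$-measure, $\supp\mu \subset M$, and since $M$ is a semigroup we conclude $\Gamma_\mu = [\supp\mu] \subset M$, which is exactly the desired $\Gamma_\mu$-invariance.

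The main obstacle I anticipate is checking that $T_h$ actually lands in $\mathcal W$: this step rests both on the invariance $h^{-1}X_\mu = X_\mu$ from Lemma \ref{Lemma-subvarH001} (so that dimension and the ambient algebraic set are preserved) and on the negligibility of the pairwise intersections in $\mathcal W$, without which the maximum principle could not be transferred from the sum $\nu(h^{-1}Z) = \sum_Y \nu(h^{-1}Y)$ down to each individual summand $\nu(h^{-1}Y)$.
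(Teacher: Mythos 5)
Your proof is correct and follows essentially the same route as the paper: stationarity of $\nu$ plus the bound $\nu(g^{-1}Y)\leq\beta$ and the maximum principle (Lemma \ref{lemma-Princ-max}) give $\nu(g^{-1}Y)=\beta$ for $\mu$-a.e.\ $g$, and finiteness of $\mathcal W$ together with a closed-semigroup argument promotes this from $\supp\mu$ to all of $\Gamma_\mu$. The only cosmetic differences are that you apply the maximum principle to the union $Z=\bigcup_{Y\in\mathcal W}Y$ (using the $\nu$-nullity of pairwise intersections) instead of to each $Y$ separately, and that you spell out the closedness and semigroup structure of the stabilizing set $M$ more explicitly than the paper does.
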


\begin{proof}
Let $Y$ be an element of $\mathcal W$.
Since the measure $\nu$ is $\mu$-stationary, 
we have
$$
\nu(Y) = \int_{\Gamma_{\mu}} \nu (g^{-1} Y) \mu(dg).
 $$
 For any $g\in \Gamma_{\mu}$ we have that $g^{-1} Y \in \mathfrak A(X_\mu) $
 and $g^{-1} Y$ has dimension $d_0$,
hence $\nu(g^{-1} Y) \leq \beta.$ 
Since
 $$
\beta= \nu(Y) = \int_{\Gamma_{\mu}} \nu (g^{-1} Y) \mu(dg) \leq \beta, 
 $$
by the maximum principle (see Lemma \ref{lemma-Princ-max}) 
we get that $\nu (g^{-1} Y) = \beta$ for $\mu$-almost all $g\in \Gamma_{\mu}$.
As $\mathcal W$ is finite, 
we get $g^{-1} \mathcal W = \mathcal W$ for $\mu$-almost all $g\in \Gamma_{\mu}$.
Since $\mathcal W$ is a finite,  the set $\{ g \in \Gamma_{\mu}: g^{-1} \mathcal W = \mathcal W \}$ is closed.
We have shown that it has $\mu$ measure 1, therefore it contains the $\supp \mu$ (by the definition of the latter).
This means that for any $g\in \supp \mu$ we have $g^{-1}\mathcal W \subset \mathcal W$. 
Since the cardinality of $\mathcal W$ is finite, 
we get that $g^{-1}\mathcal W = \mathcal W$ for any $g\in \supp \mu$.
 By the definition of the support of the measure $\mu$, 
 the last statement implies that $g^{-1} \mathcal W = \mathcal W$ for all $g\in \Gamma_{\mu}$.
   Since $g^{-1} \mathcal W = \mathcal W$ is equivalent to $g \mathcal W = \mathcal W$, for  $g\in \Gamma_{\mu}$, 
 the assertion follows.
 \end{proof}


\begin{proof}[Proof of Proposition \ref{lemma-dimension001aa}]
By Lemma \ref{lemma-ccc104aa}, the set $\mathcal W$ is $\Gamma_{\mu}$-invariant 
($\Gamma_{\mu} \mathcal W\subset \mathcal W $),  
and by Lemma \ref{lemma-ccc100aa}, the set $\mathcal W$ is finite. 
Therefore, the set 
$Z=\bigcup_{Y\in \mathcal W }  Y$
is a Zariski closed $\Gamma_{\mu}$-invariant algebraic subset in $X_{\mu}$.
As $\Gamma_{\mu}$ is Zariski dense in $H_{\mu} =\textrm{Zc} (\Gamma_{\mu} )$,
the algebraic set $Z$ is $H_{\mu}$-invariant.
By Lemma \ref{Lemma-subvarH001}, we have that $X_{\mu}\subset Z$.
On the other hand $Z\subset X_{\mu}$, so  $Z= X_{\mu}$,
which reads as $X_{\mu}=\bigcup_{Y\in \mathcal W }  Y$.
According to Lemma \ref{Lemma-subvarH002}, $X_{\mu}$ is irreducible,
hence we get $X_{\mu}= Y$ for some $Y$ in $\mathcal W$.
In particular, $\dim (Y) = d_0 = \dim(X_{\mu})$, which concludes 
the proof of Proposition \ref{lemma-dimension001aa}.
\end{proof}

Now we show that Proposition \ref{lemma-dimension001aa} implies Proposition \ref{lemma-Y_0}. 


\begin{proof}[Proof of Proposition \ref{lemma-Y_0}]
Let $Y$ be an algebraic subset of $X_\mu$ with $\nu(Y)>0$ and let us show that
$Y=X_\mu$.
By Proposition 1.5 of Hartshorne \cite{hartshorne}
we can decompose the set $Y$ into a union of Zariski closed and 
Zariski irreducible
 subsets $Z_1,\ldots, Z_r$ of $X_\mu$:  
$Y=Z_1 \cup\ldots\cup Z_r$. 
Then, there exists $k$ such that $\nu(Z_k) >0$, and hence $\dim(Z_k) \geq d_0=\dim (X_{\mu})$, 
by Proposition \ref{lemma-dimension001aa}. 
It follows that $Z_k= X_{\mu}$, 
hence $Y =X_\mu$ as claimed by our assertion. 
\end{proof}

Now we prove that Proposition \ref{lemma-Y_0} implies Theorem \ref{Lemma-Fursten-set001}.

\begin{proof}[Proof of Theorem \ref{Lemma-Fursten-set001}]
Let $Y$ be an algebraic subset of the projective space $\mathbb P^{d-1}_{\mathbb C}$. 
If $X_\mu \subset Y$, we have 
$\nu(Y)\geq \nu(X_\mu) =1$. 
Otherwise, $Y\cap X_\mu$ is a proper algebraic subset of $X_\mu$.
Since $\nu(X_\mu)=1,$ we have $\nu(Y)=\nu(Y\cap  X_\mu)=0,$ 
by Proposition  \ref{lemma-Y_0}. 
The assertion of the theorem follows now from the fact that
an algebraic subset of $\mathbb P^{d-1}$ is the intersection of an algebraic subset of 
the projective space $\mathbb P^{d-1}_{\mathbb C}$ with  $\mathbb P^{d-1}$.  
\end{proof}



\section{Proof of Theorems \ref{Th-main-subsets001} and  \ref{proposition-exist001}}

We shall first prove the following: 

\begin{proposition} \label{prop-Y_0nu_s001} 
Assume conditions \ref{Cond strong-irred} and \ref{Cond proxim}. 
Then, for any $s\in I_{\mu}^+$ and
for any proper algebraic subset $Y$ of 
$X_\mu$ it holds $\nu_s( Y)=0$.
\end{proposition}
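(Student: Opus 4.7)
The plan is to imitate the proof of Proposition~\ref{lemma-Y_0}, adapting it to the eigenmeasure $\nu_s$ in place of the $\mu$-stationary measure $\nu$. Set
\[
d_s := \min\{r\ge 1 : \exists\, Y\in\mathfrak A(X_\mu),\ \dim Y=r,\ \nu_s(Y)>0\},
\]
$\beta_s := \sup\{\nu_s(Y) : Y\in\mathfrak A(X_\mu),\ \dim Y=d_s\}$, and $\mathcal W_s := \{Y\in\mathfrak A(X_\mu) : \dim Y=d_s,\ \nu_s(Y)=\beta_s\}$. The upper bound $d_s\leq \dim X_\mu$ is immediate since $\nu_s(X_\mu)=1$ by Lemma~\ref{lemma-supp of mes nu}. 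The three auxiliary facts analogous to Lemmas~\ref{lemma-algsubs0}, \ref{lemma-ccc100aa}, and \ref{lemma-beta attained001}---that algebraic sets of dimension $<d_s$ are $\nu_s$-null, that $\mathcal W_s$ is finite of cardinality at most $\beta_s^{-1}$, and that the supremum $\beta_s$ is attained---transfer to $\nu_s$ verbatim, since their proofs rely only on $\nu_s$ being a probability measure together with the fact that the intersection of two distinct irreducible $d_s$-dimensional subvarieties has dimension $<d_s$.

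The target is the analog of Proposition~\ref{lemma-dimension001aa}: $\dim X_\mu = d_s$. Once this is known, the conclusion of Proposition~\ref{prop-Y_0nu_s001} follows exactly as in the proof of Proposition~\ref{lemma-Y_0}: any proper algebraic $Y\subset X_\mu$ decomposes into finitely many irreducible components, each of dimension $<\dim X_\mu = d_s$ by irreducibility of $X_\mu$ (Lemma~\ref{Lemma-subvarH002}), hence each is $\nu_s$-null and so is $Y$. To prove $\dim X_\mu = d_s$ one shows that $Z_s := \bigcup_{Y\in\mathcal W_s} Y$---automatically Zariski closed since $\mathcal W_s$ is finite---is $\Gamma_\mu$-invariant, hence $H_\mu$-invariant by Zariski density of $\Gamma_\mu$ in $H_\mu$, hence contains $X_\mu$ by Lemma~\ref{Lemma-subvarH001}. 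Combined with $Z_s\subseteq X_\mu$ and irreducibility of $X_\mu$, this forces $X_\mu = Y$ for some $Y\in\mathcal W_s$, and thus $d_s = \dim X_\mu$.

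The main obstacle is the $\Gamma_\mu$-invariance of $\mathcal W_s$, i.e.\ the analog of Lemma~\ref{lemma-ccc104aa}. For the stationary measure $\nu$, this followed directly from the identity $\nu(Y)=\int \nu(g^{-1}Y)\mu(dg)$ together with $\nu(g^{-1}Y)\le \beta$ and the maximum principle (Lemma~\ref{lemma-Princ-max}). For $\nu_s$, the analogous eigenvalue identity
\[
\kappa(s)\,\nu_s(Y) = \int_{\mathbb G} \mu(dg)\int_{g^{-1}Y} e^{s\sigma(g,x)}\,\nu_s(dx)
\]
carries the weight $e^{s\sigma(g,x)}$ and does not admit a direct maximum-principle argument. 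My approach would be to pass to the $Q_s$-invariant probability $\pi_s = r_s\nu_s/\nu_s(r_s)$ from Section~\ref{sec-stationarymes001}; because $r_s$ is continuous, bounded and strictly positive on the compact space $\mathbb P^{d-1}$, the measures $\nu_s$ and $\pi_s$ have the same null sets, so the integer $d_s$ is unchanged when defined through $\pi_s$. Redefining $\mathcal W_s$ as the maximizers of $\pi_s$ over $d_s$-dimensional irreducible subvarieties of $X_\mu$, one then exploits the invariance
\[
\pi_s(Y) = \int_{\mathbb G}\mu(dg)\int_{g^{-1}Y} q_s(g,x)\,\pi_s(dx)
\]
and the strict positivity of $q_s(g,x)$ (the same positivity underlying Lemma~\ref{lemma-supp of mes nu}) to derive $\pi_s(g^{-1}Y)=\beta_s$ for $\mu$-a.e.\ $g$. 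Carrying out this maximum-principle step cleanly is the technical heart of the proof; once achieved, the extension from $\mu$-a.e.\ to every $g\in\Gamma_\mu$ is immediate by finiteness of $\mathcal W_s$ and closedness of $\supp\mu$, exactly as in the proof of Lemma~\ref{lemma-ccc104aa}.
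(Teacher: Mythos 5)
There is a genuine gap, and it sits exactly where you flag it: the maximum-principle step. Passing to $\pi_s=r_s\nu_s/\nu_s(r_s)$ does not rescue the argument, because the changed-measure kernel has density $q_s(g,x)=\frac{e^{s\sigma(g,x)}}{\kappa(s)}\frac{r_s(gx)}{r_s(x)}$ depending on the \emph{state} $x$ as well as on $g$. The stationarity relation you write, $\pi_s(Y)=\int_{\mathbb G}\mu(dg)\int_{g^{-1}Y}q_s(g,x)\,\pi_s(dx)$, therefore does not express $\pi_s(Y)$ as an average, with respect to a fixed probability measure in $g$, of quantities $\pi_s(g^{-1}Y)$ bounded by $\beta_s$: for fixed $g$ the inner integral is neither $\pi_s(g^{-1}Y)$ nor comparable to it by a factor depending only on $g$ that integrates to $1$, so Lemma \ref{lemma-Princ-max} simply cannot be invoked, and strict positivity of $q_s$ does not help. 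This is precisely the obstruction that forces the paper to abandon the forward chain on $\mathbb P^{d-1}$ and work instead with the family of measures $\upsilon_s^y(\varphi)=\int\varphi(x)\,\delta(x,y)^s r_s^*(y)^{-1}\nu_s(dx)$ indexed by $y\in(\mathbb P^{d-1})^*$: the cohomological identity \eqref{cohomological-001} moves the $x$-dependent weight onto the dual variable, yielding the harmonicity relation of Lemma \ref{lemmaSG-001}, $\upsilon_s^y(\varphi)=\int_{\mathbb G} g\upsilon_s^{g^*y}(\varphi)\,q_s^*(g,y)\,\mu(dg)$, in which the density $q_s^*(g,y)$ no longer involves the integration variable and $q_s^*(\cdot,y)\mu$ is a probability by \eqref{q-dens-002}. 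Only in this form does the maximum principle apply.

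Even granting that step, your closing claim that the rest is ``immediate, exactly as in the proof of Lemma \ref{lemma-ccc104aa}'' underestimates what remains. In the dual formulation the maximality of $\upsilon_s^y(Y)$ propagates to a \emph{different} measure $\upsilon_s^{g^*y}$ evaluated on $g^{-1}Y$, so the set of maximizers $\mathcal W(y)$ genuinely depends on $y$. The paper needs the total-variation continuity of $y\mapsto\upsilon_s^y$ (Corollary \ref{Corol-TVcontin001}, Lemma \ref{lemma-conTVnorm001}), the maximization of the continuous function $h$ over the compact dual limit set $\Lambda(\Gamma_\mu^*)$, the constancy of the cardinality $n(y)$ (Lemma \ref{lemma-ccc102}), the local constancy of $y\mapsto\mathcal W(y)$ on $\Lambda(\Gamma_\mu^*)$ (Lemma \ref{lemma-ccc103}), and the equivariance $\mathcal W(g^*y)=g^{-1}\mathcal W(y)$ (Lemma \ref{lemma-ccc104}) before it can assemble the finite $\Gamma_\mu$-invariant family $\mathcal W=\bigcup_{y\in\Lambda(\Gamma_\mu^*)}\mathcal W(y)$ and conclude $\dim X_\mu=d_0$ via Lemmas \ref{Lemma-subvarH001} and \ref{Lemma-subvarH002}. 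Your reduction of the final step to Proposition \ref{lemma-Y_0}'s argument and your preliminary lemmas (the analogues of Lemmas \ref{lemma-algsubs0} and \ref{lemma-ccc100aa}) are fine, but the heart of the proof — the invariance mechanism for the maximizing family when $s\neq 0$ — is missing, and the route you sketch for it would not go through as stated.
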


\begin{proposition} \label{prop-Y_0nu_s002} 
Assume conditions \ref{Cond strong-irred}, \ref{Cond proxim} and \ref{Two sided exponential moment}. 
Then, there exists a constant $s_0>0$ such that for any negative $s\in [-s_0,0)$ and 
for any proper algebraic subset $Y$ of 
$X_\mu$ it holds $\nu_s( Y)=0$.
\end{proposition}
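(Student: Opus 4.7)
The plan is to mimic the dimension-counting argument of Section~\ref{section 2}, replacing the $\mu$-stationary measure $\nu$ by the eigenmeasure $\nu_s$. I would set $d_0(s) = \min\{\dim Y : Y \in \mathfrak{A}(X_\mu),\ \nu_s(Y) > 0\}$, which is well defined since $\nu_s(X_\mu) = 1$ by Lemma~\ref{lemma-supp of mes nu}; then $\beta_s = \sup\{\nu_s(Y) : Y \in \mathfrak{A}(X_\mu),\ \dim Y = d_0(s)\}$ and $\mathcal W^{(s)} = \{Y \in \mathfrak{A}(X_\mu) : \dim Y = d_0(s),\ \nu_s(Y) = \beta_s\}$. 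The analogues of Lemmas~\ref{lemma-algsubs0}, \ref{lemma-ccc100aa}, and~\ref{lemma-beta attained001} transfer verbatim, since they rely only on the observation that two distinct irreducibles of dimension $d_0(s)$ intersect in strictly smaller dimension and hence have $\nu_s$-null intersection (by the choice of $d_0(s)$); this bounds $|\{Y : \dim Y = d_0(s),\ \nu_s(Y) \ge c\}|$ by $1/c$, so that $\beta_s$ is attained and $\mathcal W^{(s)}$ is finite.

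Once the $\Gamma_\mu$-invariance of $\mathcal W^{(s)}$ is established (the analogue of Lemma~\ref{lemma-ccc104aa}), the remaining steps are identical to those in Proposition~\ref{lemma-dimension001aa}: the finite union $Z = \bigcup_{Y \in \mathcal W^{(s)}} Y$ is a Zariski-closed $\Gamma_\mu$-invariant, hence $H_\mu$-invariant (by Zariski density of $\Gamma_\mu$ in $H_\mu$) algebraic subset of $X_\mu$, so Lemma~\ref{Lemma-subvarH001} gives $X_\mu \subseteq Z$, and the irreducibility of $X_\mu$ (Lemma~\ref{Lemma-subvarH002}) then forces $X_\mu = Y$ for some $Y \in \mathcal W^{(s)}$, giving $\dim X_\mu = d_0(s)$. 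Any proper algebraic $Y \subsetneq X_\mu$ has $\dim Y < d_0(s)$, hence $\nu_s(Y) = 0$, which is the desired conclusion.

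The main obstacle is the $\Gamma_\mu$-invariance of $\mathcal W^{(s)}$, because $\nu_s$ is not $\mu$-stationary and the identity $\nu(Y) = \int \nu(g^{-1}Y)\,\mu(dg)$ that powered the maximum principle in Lemma~\ref{lemma-ccc104aa} is unavailable. I would handle this by passing to the Markov-invariant probability $\pi_s = r_s \nu_s / \nu_s(r_s)$ of Section~\ref{sec-harmon001}, which is equivalent to $\nu_s$ (since $r_s$ is bounded above and strictly bounded below on $\mathbb P^{d-1}$), so that $d_0(s)$ and the finite maximising family are unchanged when restated for $\pi_s$, and by exploiting the harmonicity of Lemma~\ref{lemmaSG-001}. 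Taking $\varphi = \mathbf 1_Y$ in $\upsilon_s^y(Y) = \int_{\mathbb G}\upsilon_s^{g^*y}(g^{-1}Y)\,q_s^*(g,y)\,\mu(dg)$, integrating against the $Q_s^*$-invariant probability $\pi_s^* := r_s^*\nu_s^*/\nu_s^*(r_s^*)$ on $(\mathbb P^{d-1})^*$, and using the Fubini identity $\int \upsilon_s^y(Y)\,\pi_s^*(dy) = \pi_s(Y)$ (which follows from the explicit formula for $r_s$ given by~\eqref{eq-eigenfun positive001} for $s \in I_\mu^+$ and by Lemma~\ref{Lemma-expleinenfun-s-neg} for small $s < 0$, together with the symmetry $\nu_s(r_s) = \nu_s^*(r_s^*)$), should yield an identity of the form $\pi_s(Y) = \int_{\mathbb G} \Phi(g)\,\tilde\mu(dg)$ on an auxiliary probability space with $\Phi(g) \le \pi_s(Y)$ pointwise, to which Lemma~\ref{lemma-Princ-max} applies. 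The closedness-and-support step at the end of the proof of Lemma~\ref{lemma-ccc104aa} then upgrades the resulting $\tilde\mu$-almost-sure invariance to all of $\Gamma_\mu$. Propositions~\ref{prop-Y_0nu_s001} and~\ref{prop-Y_0nu_s002} are proved in parallel, the negative-$s$ case additionally invoking Propositions~\ref{Th-main-subsets002} and~\ref{Thm-Regu-s} and Lemma~\ref{Lemma-expleinenfun-s-neg} for the existence, regularity, and explicit form of the relevant eigenfunctions and eigenmeasures.
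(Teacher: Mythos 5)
Your overall architecture (dimension count $d_0$, finiteness of the family of maximizing irreducible sets, $\Gamma_\mu$-invariance of that family, then Zariski density of $\Gamma_\mu$ in $H_\mu$, Lemma \ref{Lemma-subvarH001} and irreducibility of $X_\mu$ from Lemma \ref{Lemma-subvarH002}) is the same as the paper's, and you correctly locate the real difficulty: $\nu_s$ is not $\mu$-stationary, so the identity $\nu(Y)=\int\nu(g^{-1}Y)\,\mu(dg)$ behind Lemma \ref{lemma-ccc104aa} is unavailable. But your proposed resolution of that difficulty does not work. If you average the harmonicity relation of Lemma \ref{lemmaSG-001} over $y$ against $\pi_s^*$, you do obtain
\begin{align*}
\pi_s(Y)=\int_{\mathbb G\times(\mathbb P^{d-1})^*}\upsilon_s^{g^*y}\bigl(g^{-1}Y\bigr)\,q_s^*(g,y)\,\mu(dg)\,\pi_s^*(dy),
\end{align*}
with a probability measure $q_s^*(g,y)\mu(dg)\pi_s^*(dy)$, but the maximum principle (Lemma \ref{lemma-Princ-max}) also requires the almost-sure pointwise bound $\upsilon_s^{g^*y}(g^{-1}Y)\leq\beta_s$ where $\beta_s=\max\{\pi_s(Y'):Y'\in\mathfrak A(X_\mu),\ \dim Y'=d_0\}$. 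That bound is false in general: for a fixed $y'$ the measure $\upsilon_s^{y'}$ is only equivalent to $\pi_s$, with density involving $\delta(x,y')^s/r_s(x)$, which (especially for $s<0$, where $\delta(x,y')^s$ blows up near the hyperplane determined by $y'$) can give individual irreducible sets of dimension $d_0$ an $\upsilon_s^{y'}$-mass exceeding $\max_{Y'}\pi_s(Y')$. So the integrand is not dominated by the quantity you are maximizing, and the conclusion ``$\upsilon_s^{g^*y}(g^{-1}Y)=\beta_s$ a.s.'' cannot be extracted; your single-measure maximization collapses exactly where the stationarity of $\nu$ was used in Section \ref{section 2}.

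The paper circumvents this by maximizing jointly over $y$ \emph{and} $Y$: it sets $h(y)=\max\{\upsilon_s^y(Y):Y\in\mathfrak A(X_\mu),\ \dim Y=d_0\}$ and $\beta=\max_{y\in(\mathbb P^{d-1})^*}h(y)$, so that the bound $\upsilon_s^{g^*y}(g^{-1}Y)\leq\beta$ holds automatically and the maximum principle applies fiberwise at a $y$ realizing $\beta$ (Lemma \ref{lemma-ccc101}). This forces a further layer of work that your outline omits and that is genuinely needed: continuity of $h$ in $y$ via the total-variation continuity of $y\mapsto\upsilon_s^y$ (Lemma \ref{lemma-conTVnorm001}, which for $s<0$ rests on the H\"older regularity of $\nu_s$, Proposition \ref{Thm-Regu-s} and Corollary \ref{Corol-TVcontin001}); propagation of the maximum to the limit set $\Lambda(\Gamma_\mu^*)$; invariance of the number $n(y)$ of maximizers (Lemma \ref{lemma-ccc102}); local constancy and hence finiteness of the range of $y\mapsto\mathcal W(y)$ on the compact set $\Lambda(\Gamma_\mu^*)$ (Lemma \ref{lemma-ccc103}); and the equivariance $\mathcal W(g^*y)=g^{-1}\mathcal W(y)$ (Lemma \ref{lemma-ccc104}). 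Only then is $\mathcal W=\bigcup_{y\in\Lambda(\Gamma_\mu^*)}\mathcal W(y)$ a finite $\Gamma_\mu$-invariant family to which the Zariski-closure and irreducibility argument applies. Without replacing your averaged maximum-principle step by this $y$-dependent scheme (or an equivalent device), the proof of the key invariance statement, and hence of the proposition, is incomplete.
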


We will see at the end of this section that Propositions \ref{prop-Y_0nu_s001} and \ref{prop-Y_0nu_s002} imply 
Theorems \ref{Th-main-subsets001} and  \ref{proposition-exist001}, respectively.

To establish Propositions \ref{prop-Y_0nu_s001} and \ref{prop-Y_0nu_s002} 
we stick to the proof of Guivarc'h and Le Page \cite{GL16}.
The proofs that we give below will work in  both cases $s\in I_\mu^+$ and $s\in (-s_0,0)$, where 
$s_0>0$ is small enough.  

We need a series of auxiliary statements.
We use the notation 
$\mathfrak A (X_\mu)$ from Section \ref{section 2}  and  $\upsilon_s^y$ from Section \ref{sec-harmon001}. 
 For any $y \in (\mathbb P^{d-1})^*$ define
\begin{align*} 
d_0 = \min \{ 1\leq r \leq d : 
Y\in \mathfrak A(X_\mu),\ \dim(Y)=r,\ \upsilon_s^y(Y)>0   \}. 
\end{align*}
Since the measures $\upsilon_s^y$ and $\nu_s$ are equivalent,
there is no dependence on $y$ in the above definition of $d_0$, so that    
\begin{align*} 
d_0 = \min \{ 1\leq r \leq d : 
Y\in \mathfrak A(X_\mu),\ \dim(Y)=r,\ \nu_s(Y)>0   \}.
\end{align*}
It easy to see that $d_0 \leq \dim( X_{\mu}) \leq d-1$.

\begin{lemma} \label{lemma-algsubs1}
Let $Y$ be an algebraic subset of $\mathbb P^{d-1}_{\mathbb C}$
with $\dim(Y)< d_0$. Then $\nu_s(Y)=0$.  
\end{lemma}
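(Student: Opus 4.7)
The plan is to adapt verbatim the argument used in Lemma \ref{lemma-algsubs0}, replacing $\nu$ by $\nu_s$. The two structural ingredients remain available without change: Noetherian decomposition of algebraic sets (Proposition~1.5 of Hartshorne) and the fact that $\nu_s$ is concentrated on $X_\mu$, which was established in Lemma \ref{lemma-supp of mes nu} (so that $\nu_s(X_\mu)=1$ for all admissible $s$, both in $I_\mu^+$ and in a small left neighborhood of $0$).

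Concretely, given an algebraic subset $Y$ of $\mathbb P^{d-1}_{\mathbb C}$ with $\dim(Y)<d_0$, I would first reduce to working inside $X_\mu$: since $\nu_s(X_\mu)=1$, it suffices to show $\nu_s(Y\cap X_\mu)=0$. The intersection $Y\cap X_\mu$ is algebraic in $X_\mu$, and by the Noetherian decomposition it writes as a finite union
\[
Y\cap X_\mu = Z_1\cup\cdots\cup Z_r
\]
of irreducible algebraic subsets of $X_\mu$, i.e.\ of elements of $\mathfrak A(X_\mu)$. Each component satisfies $\dim(Z_k)\leq \dim(Y\cap X_\mu)\leq \dim(Y)<d_0$.

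By the very definition of $d_0$ as the minimum dimension of an element of $\mathfrak A(X_\mu)$ with strictly positive $\nu_s$-mass, each such $Z_k$ must have $\nu_s(Z_k)=0$. Subadditivity then yields $\nu_s(Y\cap X_\mu)=0$, and hence $\nu_s(Y)=0$ as claimed.

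There is no substantive obstacle in this lemma itself: it is a bookkeeping step that isolates the definition of $d_0$. The only point worth double-checking is that the definition of $d_0$ given in the excerpt for $\nu_s$ is well-posed independently of $y$, which is immediate because $\nu_s$ and the measures $\upsilon_s^y$ from Section \ref{sec-harmon001} are equivalent (both are carried by $\supp\nu_s$ with bounded, strictly positive Radon–Nikodym derivatives). The genuine work for Propositions \ref{prop-Y_0nu_s001} and \ref{prop-Y_0nu_s002} lies in the $\nu_s$-analogues of Lemmas \ref{lemma-ccc100aa}--\ref{lemma-ccc104aa}, where $\Gamma_\mu$-invariance will have to be argued through the harmonicity relation of Lemma \ref{lemmaSG-001} rather than through the bare stationarity used in Lemma \ref{lemma-ccc104aa}; the present lemma simply supplies the lower bound $d_0\leq \dim(X_\mu)$ that those arguments will sharpen into equality.
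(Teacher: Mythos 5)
Your proposal is correct and is essentially the argument the paper intends: the paper's proof of Lemma \ref{lemma-algsubs1} is simply ``similar to Lemma \ref{lemma-algsubs0}, left to the reader,'' i.e.\ decompose into irreducible components via Proposition~1.5 of Hartshorne and invoke the definition of $d_0$. Your extra step of first intersecting with $X_\mu$ (using $\nu_s(X_\mu)=1$ from Lemma \ref{lemma-supp of mes nu}) is a harmless refinement that ensures the components genuinely lie in $\mathfrak A(X_\mu)$, which the paper's sketch for $\nu$ glosses over.
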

\begin{proof}
The proof being similar to that of Lemma \ref{lemma-algsubs0} is left to the reader. 
\end{proof}


In the sequel we are going to prove the following assertion.  
\begin{proposition} \label{lemma-dimension001}
It holds that $\dim( X_{\mu}) = d_0$.
\end{proposition}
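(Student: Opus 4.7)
The plan is to mirror the strategy of Proposition~\ref{lemma-dimension001aa}, with two adjustments forced by the fact that $\nu_s$ is no longer $\mu$-stationary. First, the stationarity of $\nu$ will be replaced by the harmonicity identity of Lemma~\ref{lemmaSG-001} for the family $\{\upsilon_s^y\}_{y\in(\mathbb P^{d-1})^*}$. Second, since this identity couples $y$-dependent measures, the supremum should be taken over pairs $(y,Y)$ rather than over $Y$ alone.

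I will begin with the direct analogue of Lemma~\ref{lemma-ccc100aa}: for each fixed $y$ and any $c>0$, the set $\mathcal W_0^y(c):=\{Y\in \mathfrak A(X_\mu):\dim Y=d_0,\ \upsilon_s^y(Y)\geq c\}$ has cardinality at most $c^{-1}$, since the pairwise intersection of distinct elements has dimension $<d_0$ and is therefore $\nu_s$-null (hence $\upsilon_s^y$-null) by Lemma~\ref{lemma-algsubs1}. Next I set $\beta:=\sup\{\upsilon_s^y(Y):y\in (\mathbb P^{d-1})^*,\ Y\in \mathfrak A(X_\mu),\ \dim Y=d_0\}$, and show this supremum is attained: take a maximizing sequence $(y_n,Y_n)$, extract $y_n\to y_*$ by compactness of $(\mathbb P^{d-1})^*$, invoke the total variation continuity of $y\mapsto \upsilon_s^y$ (Lemma~\ref{lemma-conTVnorm001}) to get $\upsilon_s^{y_*}(Y_n)\to \beta$, and use the finiteness of $\mathcal W_0^{y_*}(\beta/2)$ to extract some $Y_0$ repeated infinitely often along the sequence.

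Set $\mathcal W:=\{Y\in \mathfrak A(X_\mu):\dim Y=d_0,\ \exists\, y,\ \upsilon_s^y(Y)=\beta\}$. Assuming for the moment that $\mathcal W$ is finite, the conclusion will follow by mimicking the end of the proof of Proposition~\ref{lemma-dimension001aa}. Indeed, for any $Y\in\mathcal W$ with witness $y$, Lemma~\ref{lemmaSG-001} applied to $\varphi=\mathbf 1_Y$ gives
\[
\beta=\upsilon_s^y(Y)=\int_{\mathbb G}\upsilon_s^{g^*y}(g^{-1}Y)\,q_s^*(g,y)\,\mu(dg),
\]
where each integrand is $\leq \beta$ (since $g^{-1}Y$ remains a $d_0$-dimensional algebraic subset of $X_\mu$, using that $X_\mu$ is $H_\mu$-invariant and $H_\mu$ is a group), so Lemma~\ref{lemma-Princ-max} forces $g^{-1}Y\in\mathcal W$ for $\mu$-almost every $g$. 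Combined with the finiteness of $\mathcal W$, the standard closedness-and-support argument followed by Zariski density of $\Gamma_\mu$ in $H_\mu$ then yields $g^{-1}\mathcal W=\mathcal W$ for all $g\in\Gamma_\mu$. The finite union $Z:=\bigcup_{Y\in \mathcal W}Y$ is then a Zariski closed, $H_\mu$-invariant subset of $X_\mu$, so Lemma~\ref{Lemma-subvarH001} yields $Z=X_\mu$, and the irreducibility of $X_\mu$ (Lemma~\ref{Lemma-subvarH002}) forces $X_\mu=Y$ for some $Y\in\mathcal W$; hence $\dim X_\mu=d_0$.

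The hard part will be proving that $\mathcal W$ is finite: since the witness $y$ varies with $Y$, one cannot apply the bound $|\mathcal W_0^y(\beta)|\leq \beta^{-1}$ at a single $y$. My plan is to produce a uniform lower bound $\nu_s(Y)\geq c'>0$ for every $Y\in \mathcal W$ and then invoke the Step~1 argument with $\nu_s$ in place of $\upsilon_s^y$. For $s\in I_\mu^+$, the bound $\delta(x,y)\leq 1$ combined with the continuity and strict positivity of $r_s^*$ on the compact set $(\mathbb P^{d-1})^*$ (so $\inf r_s^*>0$) gives $\upsilon_s^y(Y)\leq \nu_s(Y)/\inf r_s^*$, hence $\nu_s(Y)\geq \beta\,\inf r_s^*>0$ for $Y\in\mathcal W$. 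For small negative $s$ the density $\delta(\cdot,y)^s$ is unbounded, and I will instead use H\"older's inequality together with the uniform H\"older regularity of Proposition~\ref{Thm-Regu-s} to obtain an estimate of the form $\upsilon_s^y(Y)\leq C\,\nu_s(Y)^{1/q'}$ for some $q'>1$, yielding again a uniform lower bound on $\nu_s(Y)$ and hence the desired finiteness.
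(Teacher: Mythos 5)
Your argument is correct, but the way you obtain finiteness of the maximizer set is genuinely different from the paper's. The paper keeps the witness $y$ in the picture: it defines $h(y)=\max_Y\upsilon_s^y(Y)$, proves $h$ is continuous (via Lemma \ref{lemma-conTVnorm001}), shows $h\equiv\beta$ and $n(y)\equiv r$ on the limit set $\Lambda(\Gamma_\mu^*)$ (Lemmas \ref{lemma-ccc101}, \ref{lemma-ccc102}), proves that $y\mapsto\mathcal W(y)$ is locally constant on the compact set $\Lambda(\Gamma_\mu^*)$ and hence has finite range (Lemma \ref{lemma-ccc103}), and establishes the equivariance $\mathcal W(g^*y)=g^{-1}\mathcal W(y)$ (Lemma \ref{lemma-ccc104}); only then does it form $\mathcal W=\bigcup_{y\in\Lambda(\Gamma_\mu^*)}\mathcal W(y)$ and run the invariance--Zariski-density--irreducibility endgame. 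You instead take the supremum over pairs $(y,Y)$, collect all maximizers regardless of witness, and prove finiteness directly through a uniform comparison $\upsilon_s^y(Y)\leq \nu_s(Y)/\inf r_s^*$ for $s\in I_\mu^+$ (using $\delta\leq 1$, positivity and continuity of $r_s^*$, and Theorem \ref{Th-main-hyperplane001}) and $\upsilon_s^y(Y)\leq C\,\nu_s(Y)^{1/q'}$ for small $s<0$ (H\"older plus the uniform moment bound of Proposition \ref{Thm-Regu-s}, with $r_s^*\geq 1$), which gives a uniform lower bound $\nu_s(Y)\geq c'>0$ for every maximizer and hence $\mathrm{card}\,\mathcal W\leq 1/c'$ by the disjointness-in-measure of distinct $d_0$-dimensional irreducible sets (Lemma \ref{lemma-algsubs1}). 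This shortcut eliminates the limit-set and local-constancy machinery entirely; what it costs is the extra quantitative input (a uniform lower bound on $r_s^*$, resp.\ the uniform regularity estimate with a possibly smaller $s_0$ so that the H\"older exponent satisfies $|s|p\leq\alpha$), whereas the paper only uses total-variation continuity of $y\mapsto\upsilon_s^y$ plus equivalence of measures, and in exchange obtains the finer structural facts ($h\equiv\beta$ on $\Lambda(\Gamma_\mu^*)$ and the equivariance of $\mathcal W(y)$) that parallel the Guivarc'h--Le Page scheme. Your maximum-principle step, the passage from $\mu$-a.e.\ $g$ to all of $\Gamma_\mu$ via finiteness and closedness of $\{g:g^{-1}\mathcal W=\mathcal W\}$, and the concluding appeal to Lemmas \ref{Lemma-subvarH001} and \ref{Lemma-subvarH002} coincide with the paper's endgame (as in Proposition \ref{lemma-dimension001aa}); just make sure to record that $\beta>0$ (from the definition of $d_0$ and the equivalence of $\upsilon_s^y$ and $\nu_s$), so that both your attainment argument and the bound $\mathrm{card}\,\mathcal W\leq 1/c'$ make sense.
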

%
%
We shall show below that Proposition \ref{lemma-dimension001} 
implies our 
Propositions \ref{prop-Y_0nu_s001} and \ref{prop-Y_0nu_s002}. 

The proof of Proposition \ref{lemma-dimension001} is based on several lemmas.
For any $c>0$ set
$$
\mathcal W_0^y(c) = \{ Y\in \mathfrak A(X_\mu) :\  \dim(Y)= d_0, \  \upsilon_s^y(Y) \geq c \}.
$$
\begin{lemma} \label{lemma-ccc100}
Let $c>0$ be a constant. Then, for any $y\in (\mathbb P^{d-1)^*}$, 
the set $\mathcal W_0^y(c)$ is finite. Moreover ${\rm card\,} \mathcal W_0^y(c)  \leq c^{-1}.$ 
\end{lemma}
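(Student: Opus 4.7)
The plan is to adapt the argument of Lemma \ref{lemma-ccc100aa} by replacing $\nu$ with the measure $\upsilon_s^y$, using Lemma \ref{lemma-algsubs1} as the substitute for Lemma \ref{lemma-algsubs0}. First I would record that $\upsilon_s^y$ is a probability measure: taking $\varphi = 1$ in \eqref{def-pi_s^x-001} and invoking the explicit expression $r_s^*(y) = \int_{\mathbb P^{d-1}} \delta(x,y)^s\, \nu_s(dx)$ (from \cite{GL16} when $s \in I_\mu^+$, and from Lemma \ref{Lemma-expleinenfun-s-neg} when $s \in (-s_0,0)$) gives $\upsilon_s^y(\mathbb P^{d-1}) = 1$. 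I would also note that $\upsilon_s^y$ is absolutely continuous with respect to $\nu_s$, since its density $\delta(\cdot,y)^s / r_s^*(y)$ is finite $\nu_s$-almost everywhere by the regularity bound in Proposition \ref{Thm-Regu-s}.

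Next, take pairwise distinct elements $Y_1, \ldots, Y_r$ of $\mathcal W_0^y(c)$. For any $i \neq j$, since $Y_i$ and $Y_j$ are irreducible of the same dimension $d_0$ and distinct, neither contains the other; hence $Y_i \cap Y_j$ is a proper closed subset of the irreducible set $Y_i$, and therefore $\dim(Y_i \cap Y_j) < d_0$. By Lemma \ref{lemma-algsubs1} this gives $\nu_s(Y_i \cap Y_j) = 0$, so by absolute continuity $\upsilon_s^y(Y_i \cap Y_j) = 0$ as well. Then
\[
1 \;\geq\; \upsilon_s^y\!\left( \bigcup_{i=1}^r Y_i \right) \;=\; \sum_{i=1}^r \upsilon_s^y(Y_i) \;\geq\; cr,
\]
so $r \leq c^{-1}$, which is the required bound. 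No serious obstacle is expected here; the only point to be careful about is justifying that $\upsilon_s^y$ is a probability measure and is absolutely continuous with respect to $\nu_s$ in both the positive and small negative regimes of $s$, which is handled by the explicit formula for $r_s^*$ together with Proposition \ref{Thm-Regu-s}.
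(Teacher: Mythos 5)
Your proof is correct and follows essentially the same route as the paper: distinct elements of $\mathcal W_0^y(c)$ have pairwise intersections of dimension strictly less than $d_0$, hence of $\upsilon_s^y$-measure zero by Lemma \ref{lemma-algsubs1} (via the equivalence/absolute continuity of $\upsilon_s^y$ and $\nu_s$), so the total mass bound $\upsilon_s^y(Y_1\cup\cdots\cup Y_r)\geq cr$ forces $r\leq c^{-1}$. Your extra justifications (that $\upsilon_s^y$ is a probability measure via the formula for $r_s^*$, and the dimension drop for proper closed subsets of an irreducible set) are details the paper leaves implicit, not a different argument.
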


\begin{proof}
Let $Y_1,\cdots,Y_r$ be two by two distinct elements of $W_0^y(c)$.
For any $1\leq i< j\leq r$, the intersection $Y_i \cap Y_j $ is an algebraic set of dimension strictly smaller than
$d_0$, hence by Lemma \ref{lemma-algsubs1}, $\upsilon_s^y(Y_i \cap Y_j) = 0.$
This implies that $\upsilon_s^y(Y_1 \cup\cdots \cup Y_r) \geq c r$, so $r \leq c^{-1}$.
\end{proof}

Proceeding as in the proof of Lemma \ref{lemma-beta attained001}, from  Lemma \ref{lemma-ccc100} it follows that,
for $y \in (\mathbb P^{d-1})^*$,
the number 
\begin{align} \label{eq-defhmu001}
h(y) = \max \big\{ \upsilon_s^y(Y) : Y\in \mathfrak A(X_\mu),\ \dim(Y)= d_0  \big\}
\end{align}
is well defined and the $\max$ is attained.
By Lemma \ref{lemma-conTVnorm001}, 
the function $h: y\in(\mathbb P^{d-1})^* \mapsto \mathbb R$ is continuous 
and thus attains its maximum on $(\mathbb P^{d-1})^*$. 
Therefore, we can define
\begin{align} \label{eq-def of beta}
\beta= \max_{y\in (\mathbb P^{d-1})^*} h(y).
\end{align}

\begin{lemma} \label{lemma-ccc101} 
If $h(y)=\beta$ for some $y\in (\mathbb P^{d-1})^*$, then $h(g^*y)=\beta$ for 
all $g\in \Gamma_\mu$.
In particular $h(y)=\beta$ for all $y\in \Lambda (\Gamma_\mu^*)$.
\end{lemma}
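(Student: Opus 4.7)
The plan is to combine the harmonicity formula of Lemma \ref{lemmaSG-001} with the maximum principle (Lemma \ref{lemma-Princ-max}). Suppose $h(y)=\beta$ and let $Y_0\in\mathfrak A(X_\mu)$ with $\dim Y_0=d_0$ be a maximizer, so that $\upsilon_s^y(Y_0)=\beta$. Applying Lemma \ref{lemmaSG-001} with $\varphi=\mathbf 1_{Y_0}$ and using that $g\upsilon_s^{g^*y}(Y_0)=\upsilon_s^{g^*y}(g^{-1}Y_0)$ gives
\begin{align*}
\beta=\int_{\mathbb G}\upsilon_s^{g^*y}(g^{-1}Y_0)\, q_s^*(g,y)\,\mu(dg).
\end{align*}
Since $q_s^*(\cdot,y)\,\mu$ is a probability measure by \eqref{q-dens-002}, the conclusion will follow from Lemma \ref{lemma-Princ-max} as soon as I verify that $\upsilon_s^{g^*y}(g^{-1}Y_0)\leq\beta$ for every $g\in\mathbb G$.

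To establish this pointwise bound I would use a dichotomy. The map $x\mapsto g^{-1}x$ is an algebraic automorphism of $\mathbb P^{d-1}_{\mathbb C}$, hence $g^{-1}Y_0$ is irreducible of dimension $d_0$. Moreover $\upsilon_s^{g^*y}$ is equivalent to $\nu_s$ and hence supported on $X_\mu$ (Lemma \ref{lemma-supp of mes nu}). If $g^{-1}Y_0\subset X_\mu$, then $g^{-1}Y_0\in\mathfrak A(X_\mu)$ has dimension $d_0$ and $\upsilon_s^{g^*y}(g^{-1}Y_0)\leq h(g^*y)\leq\beta$ by definition of $h$ and $\beta$. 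Otherwise $g^{-1}Y_0\cap X_\mu$ is a proper Zariski closed subset of the irreducible set $g^{-1}Y_0$, so has dimension strictly less than $d_0$, and Lemma \ref{lemma-algsubs1} gives $\upsilon_s^{g^*y}(g^{-1}Y_0)=0$. In either case the integrand is $\leq\beta$, and Lemma \ref{lemma-Princ-max} then yields $\upsilon_s^{g^*y}(g^{-1}Y_0)=\beta$ for $\mu$-almost every $g$; this excludes the second case and forces $h(g^*y)=\beta$ $\mu$-a.s.

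The continuity of $h$ noted just before the statement (a consequence of Lemma \ref{lemma-conTVnorm001}) extends this equality to all $g\in\supp\mu$, and iterating the argument with $g^*y$ in place of $y$ gives $h(g^*y)=\beta$ for every $g$ in the semigroup generated by $\supp\mu$; closedness of the level set $\{g:h(g^*y)=\beta\}$ together with $\Gamma_\mu=[\supp\mu]$ then yields $h(g^*y)=\beta$ for all $g\in\Gamma_\mu$. For the final assertion, $\overline{\Gamma_\mu^*y}$ is a nonempty closed $\Gamma_\mu^*$-invariant subset of $(\mathbb P^{d-1})^*$, hence contains the minimal such set $\Lambda(\Gamma_\mu^*)$ by Remark \ref{rem to lemma-limitset001}, and one more appeal to continuity of $h$ gives $h\equiv\beta$ on $\Lambda(\Gamma_\mu^*)$. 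I expect the dichotomy in the middle step to be the main obstacle: it is the only place where the algebro-geometric structure intervenes, via irreducibility of $Y_0$ and the fact that the action of $g$ preserves irreducibility and dimension, so that intersecting with $X_\mu$ either leaves $g^{-1}Y_0$ intact or strictly lowers its dimension.
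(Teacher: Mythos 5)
Your proof is correct and follows essentially the same route as the paper: the harmonicity identity of Lemma \ref{lemmaSG-001} applied to $\mathbf 1_{Y_0}$, the pointwise bound $\upsilon_s^{g^*y}(g^{-1}Y_0)\leq\beta$, the maximum principle (Lemma \ref{lemma-Princ-max}) together with the equivalence of $q_s^*(\cdot,y)\,d\mu$ and $\mu$, continuity of $h$, and minimality of $\Lambda(\Gamma_\mu^*)$. Your dichotomy (either $g^{-1}Y_0\subset X_\mu$, or the intersection with $X_\mu$ has dimension $<d_0$ and hence $\upsilon_s^{g^*y}$-measure zero by Lemma \ref{lemma-algsubs1}) carefully justifies the bound that the paper asserts directly from irreducibility and dimension, and your use of the orbit closure $\overline{\Gamma_\mu^* y}$ in place of the paper's level set $\{y': h(y')=\beta\}$ is an equivalent way to invoke minimality of the limit set.
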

\begin{proof}
Assume that $y\in(\mathbb P^{d-1})^*$ and $h(y)=\beta$. 
Using \eqref{eq-defhmu001} and \eqref{eq-def of beta}, we see that
there exists $Y\in \mathfrak A(X_\mu)$ such that $\upsilon_s^y(Y)=\beta$.
Note that $\mathbf 1_{Y}(g x) = \mathbf 1_{g^{-1}Y}(x)$ for $x\in \mathbb P^{d-1}$. 
By Lemma \ref{lemmaSG-001}, it holds that
\begin{align*} 
\beta = \upsilon_s^y(Y) = \int_{\mathbb G}  \upsilon_s^{g^* y}(g^{-1}Y) q_s^*(g,y) \mu(dg).
\end{align*}
Since $g^{-1}Y$ is an irreducible algebraic subset of dimension $d_0$, we have that for any $g^{-1}\in \Gamma_{\mu}$,
\begin{align} \label{lessthanbeta001}
\upsilon_s^{g^* y}(g^{-1}Y) \leq \beta.
\end{align}
Hence
\begin{align*} 
\beta =
\int_{\mathbb G}  \upsilon_s^{g^* y}(g^{-1}Y) q_s^*(g,y) \mu(dg)
\leq 
\beta \int_{\mathbb G} q_s^*(g,y)\mu (dg) = \beta,
\end{align*}
which implies that
\begin{align} \label{intequalbeta001}
\int_{\mathbb G}  \upsilon_s^{g^* y}(g^{-1}Y) q_s^*(g,y) \mu(dg) = \beta.
\end{align}
From \eqref{lessthanbeta001} and \eqref{intequalbeta001}, 
and the fact that $q_s^*(g,y) \mu(dg)$ is a probability measure,
by the maximum principle (Lemma \ref{lemma-Princ-max}), 
it follows that $\upsilon_s^{g^* y}(g^{-1}Y)=\beta$ for $q_s^*(\cdot,y) d\mu$
almost all $g\in \mathbb G$.
Using the fact that the measures $q_s^*(\cdot,y) d\mu$ and $\mu$ are equivalent,  we obtain that 
$\upsilon_s^{g^* y}(g^{-1}Y)=\beta$ for $\mu$-a.s.\  $g\in \mathbb G$. 
Therefore, $h(g^*y)=\beta$ for $\mu$-a.s.\  $g\in \mathbb G$.
Since the function $h$ is continuous on $(\mathbb P^{d-1})^*$, 
we conclude that $h(g^*y)=\beta$ for
all $g\in \supp \mu$, hence, by iteration,  for all $g\in \Gamma_\mu$.

To prove the second assertion, define
\begin{align} \label{setZ-001}
Z = \big\{ y\in (\mathbb P^{d-1} )^* : \ h(y)=\beta  \big\}.
\end{align}
The set $Z$ is nonempty and, since the function $h$ is continuous, it is closed.
From the first assertion of the lemma 
we have $\Gamma_\mu^* Z \subset Z$,
which means that $Z$ is a $\Gamma_\mu^*$-invariant set. 
Since $\Lambda (\Gamma_\mu^*)$ is the smallest nonempty closed $\Gamma_\mu^*$-invariant set
(by Remark \ref{rem to lemma-limitset001}),
we get
$\Lambda (\Gamma_\mu^*) \subset Z$.  
Therefore, by the definition \eqref{setZ-001} of $Z$ we conclude that $h(y)=\beta$ for all  $y \in \Lambda (\Gamma_\mu^*)$.
\end{proof}

For any $y \in (\mathbb P^{d-1})^*$ we collect all the algebraic sets $Y\in \mathfrak A (X_\mu)$ 
for which $h(y)$ defined by \eqref{eq-defhmu001} is realized, in the set 
\begin{align} \label{eq-def of W001}
\mathcal W(y) = \Big\{ Y\in \mathfrak A(X_\mu) :\ \dim(Y)= d_0, \  \upsilon_s^y(Y) = h(y) \Big\}.
\end{align}
In the same way as in the proof of Lemma \ref{lemma-beta attained001}, 
using Lemma \ref{lemma-ccc100} 
one can show that $\mathcal W(y)$ is a finite set: $\mathrm{card} (\mathcal W(y)) < \infty $.
For any $y\in (\mathbb P^{d-1})^*$, set 
\begin{align} \label{eq-def of ny001}
n(y)=\mathrm{card} ( \mathcal W(y) ).
\end{align}
Note that for any $Y\in  \mathcal W(y) $ we have $\upsilon_s^y(Y)=h(y)=\beta$.
So, by Lemma \ref{lemma-ccc100} we get that $n(y)\leq \beta^{-1}$ for any $y\in (\mathbb P^{d-1})^*$.
Set
\begin{align*} 
r = \max \left\{ n(y) :\  y\in (\mathbb P^{d-1})^*,\ h(y)=\beta \right\}.
\end{align*}
\begin{lemma} \label{lemma-ccc102}
Let $y\in (\mathbb P^{d-1})^*$.  If $h(y)=\beta$ and $n(y)=r$, then  $h(g^*y)=\beta$ and 
$n(g^*y)=r$ for all $g\in \Gamma_\mu$.
In particular $h(y)=\beta$ and $n(y)=r$ for all $y\in \Lambda (\Gamma_\mu^*)$.
\end{lemma}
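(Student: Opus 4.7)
The strategy parallels the proof of Lemma~\ref{lemma-ccc101} but adds the ingredient that the joint level set of $(h,n)$ at $(\beta,r)$ is closed. Write $\mathcal W(y) = \{Y_1, \ldots, Y_r\}$. First I would apply Lemma~\ref{lemmaSG-001} to the indicator $\varphi = \mathbf 1_{Y_i}$ for each $i \in \{1, \ldots, r\}$, using the identity $g \upsilon_s^{g^*y}(Y_i) = \upsilon_s^{g^*y}(g^{-1}Y_i)$ from \eqref{Def_g_upsilon}, to obtain
\begin{align*}
\beta = \upsilon_s^y(Y_i) = \int_{\mathbb G} \upsilon_s^{g^*y}(g^{-1}Y_i)\, q_s^*(g,y)\, \mu(dg).
\end{align*}
Since $\Gamma_\mu \subset H_\mu$ and $X_\mu$ is an $H_\mu$-orbit (Lemma~\ref{Lemma-subvarH001}), for every $g \in \Gamma_\mu$ the set $g^{-1}Y_i$ is an irreducible algebraic subset of $X_\mu$ of dimension $d_0$; hence $\upsilon_s^{g^*y}(g^{-1}Y_i) \leq h(g^*y) \leq \beta$, the identity $h(g^*y) = \beta$ for all $g \in \Gamma_\mu$ being provided by Lemma~\ref{lemma-ccc101}. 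The maximum principle (Lemma~\ref{lemma-Princ-max}), applied to each $Y_i$ and intersected over the finite range $i = 1,\ldots,r$, then yields $g^{-1}\mathcal W(y) \subset \mathcal W(g^*y)$ for $\mu$-a.e.\ $g$; combined with the upper bound $n(g^*y) \leq r$ coming from the very definition of $r$, this produces $n(g^*y) = r$ for $\mu$-a.e.\ $g$.

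To upgrade this $\mu$-a.e.\ conclusion to a statement valid for every $g \in \Gamma_\mu$, I would show that the set
\begin{align*}
S := \{y' \in (\mathbb P^{d-1})^* : h(y') = \beta,\ n(y') = r\}
\end{align*}
is closed. By continuity of $h$, $\{h = \beta\}$ is closed, so it suffices to prove upper semicontinuity of $n$ on $\{h = \beta\}$. Given $y_k \to y$ in $\{h = \beta\}$ with $n(y_k) \geq r$, pick distinct elements $Y_1^{(k)}, \ldots, Y_r^{(k)} \in \mathcal W(y_k)$. The total variation continuity of $y' \mapsto \upsilon_s^{y'}$ (Lemma~\ref{lemma-conTVnorm001}) forces $\upsilon_s^y(Y_i^{(k)}) \geq \beta/2$ for $k$ large, so each $Y_i^{(k)}$ lies in $\mathcal W_0^y(\beta/2)$. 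This latter set is finite by Lemma~\ref{lemma-ccc100}, so a pigeonhole argument yields a subsequence along which $(Y_1^{(k)}, \ldots, Y_r^{(k)})$ equals a fixed $r$-tuple $(Y_1, \ldots, Y_r)$; the continuity of $y' \mapsto \upsilon_s^{y'}(Y_i)$ then gives $\upsilon_s^y(Y_i) = \beta$, so $Y_i \in \mathcal W(y)$ for each $i$ and $n(y) \geq r$.

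With $S$ closed, the preimage $\{g \in \mathbb G : g^*y \in S\}$ is closed and of full $\mu$-measure, hence contains $\supp \mu$. Iterating the first paragraph with $y$ replaced by $g_1^*y \in S$ for $g_1 \in \supp \mu$, and using $(g_1 g_2)^* = g_2^* g_1^*$, induction yields $g^*y \in S$ for every $g$ in the sub-semigroup of $\mathbb G$ generated by $\supp \mu$; closedness of $\{g : g^*y \in S\}$ then extends this to all $g \in \Gamma_\mu$, which proves the first assertion. For the second assertion, the definition of $r$ supplies some $y_0 \in S$; by the first assertion $\Gamma_\mu^* y_0 \subset S$, so $\overline{\Gamma_\mu^* y_0} \subset S$ is a nonempty closed $\Gamma_\mu^*$-invariant subset, and the minimality of $\Lambda(\Gamma_\mu^*)$ recalled in Remark~\ref{rem to lemma-limitset001} gives $\Lambda(\Gamma_\mu^*) \subset S$. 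The most delicate step will be the upper semicontinuity of $n$: since $n$ is integer-valued with no continuity a priori, closedness of $\{n \geq r\}$ genuinely requires the combination of Lemma~\ref{lemma-conTVnorm001} with the finiteness of Lemma~\ref{lemma-ccc100}, and it is only once this closedness is in hand that the scheme of Lemma~\ref{lemma-ccc101} can be transcribed here.
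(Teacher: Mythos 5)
Your argument is correct, and it rests on the same core ingredients as the paper's proof (the harmonicity identity of Lemma \ref{lemmaSG-001}, the maximum principle of Lemma \ref{lemma-Princ-max}, and the minimality of $\Lambda(\Gamma_\mu^*)$ from Remark \ref{rem to lemma-limitset001}), but the combinatorial mechanism is genuinely different. The paper introduces the auxiliary function $h_r(y)$, the maximal $\upsilon_s^y$-mass of a union $Y_1\cup\cdots\cup Y_r$ of $r$ irreducible algebraic subsets of dimension $d_0$, applies Lemma \ref{lemmaSG-001} once to the union (whose mass is exactly $\beta r$ because pairwise intersections are $\upsilon_s^y$-null by Lemma \ref{lemma-algsubs1}), gets $h_r(g^*y)=\beta r$ for $\mu$-a.e.\ $g$ by the maximum principle, upgrades this to $\supp\mu$ and then $\Gamma_\mu$ via continuity of $h_r$, and finally identifies $\{h=\beta,\ n=r\}$ with $Z\cap Z_r$. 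You instead apply Lemma \ref{lemmaSG-001} to each $Y_i$ separately and intersect the finitely many a.e.\ statements (correctly importing $h(g^*y)=\beta$ from Lemma \ref{lemma-ccc101} so that $n(g^*y)\leq r$ makes sense and masses equal to $\beta$ do land in $\mathcal W(g^*y)$), and you replace the continuity of $h_r$ — which the paper invokes but does not actually verify; it would be proved like that of $h$ — by a direct proof that $S=\{h=\beta,\ n=r\}$ is closed, obtained from the total-variation continuity of $y\mapsto\upsilon_s^y$ (Lemma \ref{lemma-conTVnorm001}), the finiteness of $\mathcal W_0^y(\beta/2)$ (Lemma \ref{lemma-ccc100}) and a pigeonhole/subsequence argument; the iteration over products of elements of $\supp\mu$ and the passage to $\Gamma_\mu$ by closedness are then the same in spirit as in Lemma \ref{lemma-ccc101}. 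Both routes are sound: yours makes the a.e.-to-everywhere step fully explicit (the upper semicontinuity of $n$ on $\{h=\beta\}$ is precisely the missing detail behind the paper's appeal to continuity of $h_r$), whereas the paper's union/$h_r$ device is the formulation that transfers most directly to the equivariance statement of Lemma \ref{lemma-ccc104}. Your side remarks are also the right ones to keep: $g^{-1}Y_i\in\mathfrak A(X_\mu)$ with unchanged dimension because $X_\mu$ is the closed $H_\mu$-orbit of Lemma \ref{Lemma-subvarH001} and $H_\mu$ is a group, and distinctness of the $g^{-1}Y_i$ is preserved since $g$ is invertible.
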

\begin{proof}
Let $y\in (\mathbb P^{d-1})^*$ be such that $h(y)=\beta$ and let $r=n(y)$.
Define 
\begin{align*} 
h_r(y) = \max \Big\{ \upsilon_s^y(Y) : \ 
 Y= Y_1\cup\cdots\cup Y_{r}, \ 
  Y_k \in \mathfrak A(X_\mu), \dim(Y_k)= d_0, 1\leq k \leq r  \Big\}.
\end{align*}
From \eqref{eq-defhmu001} and  \eqref{eq-def of beta}
it follows that $\upsilon_s^y(Y_k) \leq \beta$, thus
$h_r(y) \leq \beta r$. 
Let $y\in (\mathbb P^{d-1})^*$ such that $h_r(y) = \beta r$.
Then there exists $Y\in \mathfrak A(X_\mu)$ 
such that  $\upsilon_s^y(Y)=\beta r$ and  $Y= Y_1\cup\cdots\cup Y_{r},$ 
where $\dim(Y_k)= d_0$ and $Y_k \in \mathfrak A(X_\mu)$.
Moreover, the sets $Y_1,\cdots, Y_{r} $ are two by two distinct and, 
for any $1\leq i< j\leq r$, 
the intersection 
$Y_i \cap Y_j $ is an algebraic set of dimension strictly smaller than
$d_0$. Hence by Lemma \ref{lemma-algsubs1}, we obtain $\upsilon_s^{y}(Y_i \cap Y_j) = 0.$
Therefore, it holds that $\upsilon_s^{y} (Y_1\cup\cdots\cup Y_{r}) = \beta r$.
Using Lemma \ref{lemmaSG-001} and the bound $\upsilon_s^{y} (g^{-1}(Y_1\cup\cdots\cup Y_{r})) \leq \beta r$,  
we get
\begin{align*} 
\beta r &= \upsilon_s^y (Y_1\cup\cdots\cup Y_{r}) \\
&= \int_{\mathbb G} \upsilon_s^{g^*y} \left( g^{-1}(Y_1\cup\cdots\cup Y_{r}) \right) q_s^*(g,y) \mu(dg)  
\leq \beta r.
\end{align*}
This implies that
\begin{align*} 
\int_{\mathbb G} \upsilon_s^{g^*y} \left( g^{-1}(Y_1\cup\cdots\cup Y_{r}) \right) q_s^*(g,y) \mu(dg)  = \beta r.
\end{align*}
By the maximum principle (Lemma \ref{lemma-Princ-max}), it follows that $q_s^*(\cdot,y) d\mu$-a.s.
\begin{align*} 
\upsilon_s^{g^*y} \left( g^{-1}(Y_1\cup\cdots\cup Y_{r}) \right) = \beta r.
\end{align*}
This means that $h_r(g^*y) =\beta r$ for $q_s^*(\cdot,y) d\mu$ almost all $g\in \mathbb G$. 
Since the measures $q_s^*(\cdot,y) d\mu$ and $\mu$ are equivalent,
we deduce that
$h_r(g^*y) =\beta r$ holds for $\mu$ almost all $g\in \mathbb G$.
Thus we have proved that $h_r(y) =\beta r$ implies $h_r(g^*y) =\beta r$
$\mu$-a.s.\ for $\mu$ almost all $g\in \mathbb G$.
Since the function $h$ is continuous on $(\mathbb P^{d-1})^*$,
it follows that $h_r(g^*y) =\beta r$ for all $g\in \supp \mu$ and hence for $g\in \Gamma_{\mu}$.
This means that $\Gamma_{\mu}^* Z_r \subset Z_r$, where   
\begin{align*} 
Z_r = \left\{ y\in (\mathbb P^{d-1})^* :\  h_r(y)=\beta r \right\}.
\end{align*}
By Lemma \ref{lemma-ccc101}, we know that $\Gamma_{\mu}^* Z \subset Z$,
where $Z$ is defined by \eqref{setZ-001}.
Therefore $\Gamma_\mu^* (Z_r \cap Z)  \subset (Z_r \cap Z).$
Noting that
\begin{align*} 
Z_r\cap Z =  \left\{ y\in (\mathbb P^{d-1})^* :\  h(y)=\beta, \ n(y) = r \right\}, 
\end{align*}
 we get the first assertion of the lemma. 
 
Now we prove the second assertion. 
Since the function $h_r$ is continuous, 
the set $Z_r$ is nonempty and closed. 
We have seen in the proof of Lemma \ref{lemma-ccc101} 
that the set $Z$ is also nonempty and closed.  
Recalling that $\Lambda (\Gamma_\mu^*)$ is the smallest closed $\Gamma_\mu^*$-invariant set 
(by Remark \ref{rem to lemma-limitset001}), 
we obtain $\Lambda (\Gamma_\mu^*) \subset Z_r \cap Z,$
which is precisely the second assertion.
%
%
%
%
%
%
%
%
 \end{proof}


\begin{lemma} \label{lemma-ccc103} 
The mapping
$ y\in \Lambda (\Gamma_\mu^*) \mapsto \mathcal W(y)$ is locally constant.
In particular, the number of distinct values of this mapping is finite, i.e.\ 
\begin{align*} 
\mathrm{card} \left\{ \mathcal W(y) :\ y\in \Lambda (\Gamma_\mu^*) \right\} < \infty.
\end{align*}
\end{lemma}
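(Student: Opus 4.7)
The plan is to use the total variation continuity of $y\mapsto \upsilon_s^y$ (Lemma \ref{lemma-conTVnorm001}) together with the finiteness result in Lemma \ref{lemma-ccc100} and the fact that, by Lemma \ref{lemma-ccc102}, the cardinality $n(y)=r$ is constant on $\Lambda(\Gamma_\mu^*)$. Fix $y_0\in\Lambda(\Gamma_\mu^*)$ and write $\mathcal{W}(y_0)=\{Y_1,\ldots,Y_r\}$; I want to produce an open neighbourhood $U$ of $y_0$ in $\Lambda(\Gamma_\mu^*)$ on which $\mathcal{W}(\cdot)=\mathcal{W}(y_0)$.

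First I would trap $\mathcal{W}(y)$ inside a fixed finite reference family. By Lemma \ref{lemma-ccc100} (applied with $c=\beta/2$) the collection $\mathcal{F}:=\{Y\in\mathfrak{A}(X_\mu):\dim(Y)=d_0,\ \upsilon_s^{y_0}(Y)\geq\beta/2\}$ is finite, and it contains $\mathcal{W}(y_0)$. Whenever $\|\upsilon_s^{y}-\upsilon_s^{y_0}\|_{\mathrm{TV}}<\beta/2$, any $Z\in\mathcal{W}(y)$ must satisfy $\upsilon_s^{y_0}(Z)>\upsilon_s^{y}(Z)-\beta/2=\beta/2$, hence $Z\in\mathcal{F}$. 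So near $y_0$ the finitely many possible sets $\mathcal{W}(y)$ all lie in the finite power set of $\mathcal{F}$.

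Next I would exploit the strict gap between $\mathcal{W}(y_0)$ and $\mathcal{F}\setminus\mathcal{W}(y_0)$. Define
\[
\alpha=\min\bigl\{\beta-\upsilon_s^{y_0}(Y):Y\in\mathcal{F}\setminus\mathcal{W}(y_0)\bigr\},
\]
with the convention $\alpha=\beta/2$ if $\mathcal{F}=\mathcal{W}(y_0)$. Since $\mathcal{F}$ is finite and every $Y\in\mathcal{F}\setminus\mathcal{W}(y_0)$ has $\upsilon_s^{y_0}(Y)<\beta$, we get $\alpha>0$. Choose an open neighbourhood $U$ of $y_0$ in $\Lambda(\Gamma_\mu^*)$ on which $\|\upsilon_s^{y}-\upsilon_s^{y_0}\|_{\mathrm{TV}}<\min(\alpha/2,\beta/2)$, which exists by Lemma \ref{lemma-conTVnorm001}. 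For $y\in U$ and $Y\in\mathcal{F}\setminus\mathcal{W}(y_0)$ we then have $\upsilon_s^{y}(Y)\leq\upsilon_s^{y_0}(Y)+\alpha/2\leq\beta-\alpha/2<\beta$, so $Y\notin\mathcal{W}(y)$. Combined with the first step this forces $\mathcal{W}(y)\subset\mathcal{W}(y_0)$, and since by Lemma \ref{lemma-ccc102} both sets have cardinality $r$, equality $\mathcal{W}(y)=\mathcal{W}(y_0)$ follows.

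This proves that $y\mapsto\mathcal{W}(y)$ is locally constant on $\Lambda(\Gamma_\mu^*)$. The second assertion is then immediate: $\Lambda(\Gamma_\mu^*)$ is a closed subset of the compact space $(\mathbb{P}^{d-1})^*$, hence compact, and a locally constant map on a compact space takes only finitely many values (cover by the open sets on which the map is constant and extract a finite subcover). I expect no serious obstacle here: the whole argument is driven by the quantitative finiteness from Lemma \ref{lemma-ccc100} and total-variation continuity, with the only care being the two-step bootstrap (first confining $\mathcal{W}(y)$ to the finite set $\mathcal{F}$, then excluding the elements of $\mathcal{F}\setminus\mathcal{W}(y_0)$ via the gap $\alpha$).
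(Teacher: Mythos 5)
Your proof is correct and follows essentially the same route as the paper: total-variation continuity of $y\mapsto\upsilon_s^y$ (Lemma \ref{lemma-conTVnorm001}), the finiteness from Lemma \ref{lemma-ccc100}, a gap below $\beta$, the constancy $n(y)=r$ from Lemma \ref{lemma-ccc102}, and finally compactness of $\Lambda(\Gamma_\mu^*)$. The only difference is bookkeeping: you create a gap $\alpha$ only at the fixed point $y_0$ (by trapping $\mathcal W(y)$ inside the finite family $\mathcal F$) and deduce $\mathcal W(y)\subset\mathcal W(y_0)$, whereas the paper works with a uniform gap $\ee_0=\sup_y \ee(y)<\beta$ and deduces the reverse inclusion; both close the argument with the same cardinality identity.
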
 
\begin{proof}
For any $y \in (\mathbb P^{d-1})^*$, set 
\begin{align*} 
\ee(y) = \max \left\{ \upsilon_s^y(Y) : \  
Y\in \mathfrak A(X_\mu),  \ \dim(Y)= d_0,\   \upsilon_s^y(Y) <\beta  \right\},
\end{align*}
where the maximum is attained (this can be established using Lemma \ref{lemma-ccc100} 
in the same way as in the proof of Lemma \ref{lemma-beta attained001}).
Equip the space $\mathcal C (\mathbb P^{d-1})'$ of complex valued Borel measures on $\mathbb P^{d-1}$
with the total variation norm $ \|\cdot \|_{\mathrm{TV}}$.
Since $\ee(y) <\beta $, 
by the continuity of the mapping 
$y \in (\mathbb P^{d-1})^*\mapsto \upsilon_s^y \in \mathcal C (\mathbb P^{d-1})'$
(see Lemma \ref{lemma-conTVnorm001}),
we get 
\begin{align*}
\ee_0=\sup_{y\in (\mathbb P^{d-1})^*) }  \ee(y) < \beta. 
\end{align*}
Note that any algebraic subset $Y$ has the following property: 
\begin{align} \label{eq-either formula}
\upsilon_s^y(Y)=\beta \quad \mbox{or} \quad   
\upsilon_s^y(Y) \leq \varepsilon_0.
\end{align}

On the compact set $\Lambda (\Gamma_\mu^*)$ 
there exists a neighborhood $U_y \subset \Lambda(\Gamma_\mu)$ of $y$
such that for any $y'\in U_y$,
\begin{align} \label{TV-001}
\|\upsilon_s^y - \upsilon_s^{y'}\|_{\mathrm{TV}} < \beta-\ee_0.
\end{align}
Let $Y_1,\ldots,Y_r$ be the elements of $\mathcal W(y)$ that realize $\beta$, 
i.e.\ $\upsilon_s^y(Y_k)=\beta$, $k=1,\ldots,r$. 
In particular, this implies that 
\begin{align} \label{cupcup001}
\upsilon_s^y(Y_1\cup\ldots\cup Y_r)=\beta r.
\end{align}
Then, by \eqref{TV-001} and \eqref{cupcup001},
\begin{align} \label{cupcup002}
\left| \upsilon_s^y(Y_1\cup\ldots\cup Y_r) - \upsilon_s^{y'}(Y_1\cup\ldots\cup Y_r)  \right| 
= \left|  \beta r - \upsilon_s^{y'}(Y_1\cup\ldots\cup Y_r)  \right| 
< \beta -\ee_0. 
\end{align}
By \eqref{eq-either formula}
we have either $\upsilon_s^{y'}(Y_k)=\beta$ 
or  $\upsilon_s^{y'}(Y_k)\leq \ee_0$, for $k=1,\ldots,r$.
If there exists $Y_k$ such that $\upsilon_s^{y'}(Y_k)\leq \ee_0$,
then this will lead to a contradiction. 
Hence we get that $\upsilon_s^{y'}(Y_k)=\beta$ for $k=1,\ldots,r$, 
and so that $\mathcal W(y') \subset \mathcal W(y).$
In turn, since $\mathrm{card}\,\mathcal W(y) = \mathrm{card}\,\mathcal W(y') = r$, 
we obtain that $\mathcal W(y) = \mathcal W(y')$ for any $y'\in U_{y}$, which means that 
the mapping $ y \mapsto \mathcal W(y)$ is locally constant
on $\Lambda (\Gamma_\mu^*)$.

Since  $\Lambda (\Gamma_\mu^*)$ is a closed set of the projective space, it is also compact.
We know that any locally constant mapping on a compact set has a finite range. 
Therefore, the mapping $y \mapsto \mathcal W(y)$ on $\Lambda (\Gamma_\mu^*)$ 
takes only finite many values. 
This proves the second assertion.
\end{proof}

\begin{lemma} \label{lemma-ccc104}
For any $y\in\Lambda (\Gamma_\mu^*)$, we have  
$\mathcal W (g^*y) = g^{-1} \mathcal W (y)$ 
for $\mu$-a.s.\ all $g\in \Gamma_\mu$.
\end{lemma}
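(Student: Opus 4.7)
My plan is to follow the strategy of Lemma \ref{lemma-ccc102}, but now applied to the indicator of the \emph{union} of all elements of $\mathcal W(y)$, and then to finish with a cardinality argument. First, I would fix $y\in\Lambda(\Gamma_\mu^*)$ and write $\mathcal W(y)=\{Y_1,\ldots,Y_r\}$, where by Lemma \ref{lemma-ccc102} one has $h(y)=\beta$, $n(y)=r$ and $\upsilon_s^y(Y_k)=\beta$ for each $k$. Since each $Y_k$ is irreducible of dimension $d_0$, the pairwise intersections $Y_i\cap Y_j$ are proper algebraic subsets of $Y_i$, hence have dimension strictly less than $d_0$; by Lemma \ref{lemma-algsubs1} together with the fact that $\upsilon_s^y$ is absolutely continuous with respect to $\nu_s$, they carry no $\upsilon_s^y$-mass, so $\upsilon_s^y(Y_1\cup\cdots\cup Y_r)=\beta r$.

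Next, I would apply the harmonicity identity (Lemma \ref{lemmaSG-001}) to $\varphi=\mathbf{1}_{Y_1\cup\cdots\cup Y_r}$ to obtain
\[
\beta r=\int_{\mathbb G}\upsilon_s^{g^*y}\bigl(g^{-1}(Y_1\cup\cdots\cup Y_r)\bigr)\,q_s^*(g,y)\mu(dg).
\]
The crucial step is to show that the integrand is bounded above by $\beta r$ for \emph{every} $g\in\Gamma_\mu$. I would analyze $\upsilon_s^{g^*y}(g^{-1}Y_k)$ via a dichotomy: since $g$ acts as an algebraic isomorphism of $\mathbb P^{d-1}_{\mathbb C}$, the translate $g^{-1}Y_k$ is irreducible of dimension $d_0$, so either $g^{-1}Y_k\subset X_\mu$, in which case $g^{-1}Y_k\in\mathfrak A(X_\mu)$ and Lemma \ref{lemma-ccc102} applied at $g^*y$ (which lies in $\Lambda(\Gamma_\mu^*)$ by its $\Gamma_\mu^*$-invariance) gives $\upsilon_s^{g^*y}(g^{-1}Y_k)\leq h(g^*y)=\beta$; or else $g^{-1}Y_k\cap X_\mu$ is a proper subvariety of the irreducible $g^{-1}Y_k$, hence of dimension $<d_0$, and Lemma \ref{lemma-algsubs1} yields $\upsilon_s^{g^*y}(g^{-1}Y_k)=0$. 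Since $g^{-1}Y_i\cap g^{-1}Y_j=g^{-1}(Y_i\cap Y_j)$ also has dimension $<d_0$ and zero $\upsilon_s^{g^*y}$-mass, summing gives $\upsilon_s^{g^*y}(g^{-1}(Y_1\cup\cdots\cup Y_r))=\sum_k\upsilon_s^{g^*y}(g^{-1}Y_k)\leq\beta r$.

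Then I would invoke the maximum principle (Lemma \ref{lemma-Princ-max}) to conclude that the integrand equals $\beta r$ for $q_s^*(\cdot,y)\mu$-a.s.\ $g$, hence for $\mu$-a.s.\ $g$ by equivalence of $q_s^*(\cdot,y)\mu$ and $\mu$. Because each summand is at most $\beta$ and they sum to $\beta r$, each must equal $\beta$; in particular $g^{-1}Y_k\subset X_\mu$ and $g^{-1}Y_k\in\mathcal W(g^*y)$, giving $g^{-1}\mathcal W(y)\subset\mathcal W(g^*y)$ for $\mu$-a.s.\ $g$. Since $g$ acts bijectively on irreducible subvarieties, the cardinality of the left-hand side is $r$; by Lemma \ref{lemma-ccc102}, $\mathrm{card}\,\mathcal W(g^*y)=n(g^*y)=r$ as well, because $g^*y\in\Lambda(\Gamma_\mu^*)$. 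The inclusion together with matching cardinalities forces $\mathcal W(g^*y)=g^{-1}\mathcal W(y)$ for $\mu$-a.s.\ $g$.

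The delicate point I foresee is the pointwise bound $\upsilon_s^{g^*y}(g^{-1}Y_k)\leq\beta$ for all $g\in\Gamma_\mu$: because $\Gamma_\mu$ is only a semigroup, the translates $g^{-1}Y_k$ need not sit inside $X_\mu$, and it is the irreducibility of $g^{-1}Y_k$ (coupled with Lemma \ref{lemma-algsubs1}) that kills the measure in the non-containment case and so legitimizes the use of the maximum principle.
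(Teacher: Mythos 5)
Your proof is correct and takes essentially the same route as the paper's: computing $\upsilon_s^y(Y_1\cup\cdots\cup Y_r)=\beta r$, applying the harmonicity identity of Lemma \ref{lemmaSG-001} and the maximum principle (Lemma \ref{lemma-Princ-max}), forcing each $\upsilon_s^{g^*y}(g^{-1}Y_k)=\beta$, and concluding by the cardinality match $n(g^*y)=r$ from Lemma \ref{lemma-ccc102}. Your explicit dichotomy justifying the pointwise bound $\upsilon_s^{g^*y}(g^{-1}Y_k)\leq\beta$ (either $g^{-1}Y_k\subset X_\mu$, or its intersection with $X_\mu$ has dimension $<d_0$ and carries no mass) is a careful detail that the paper leaves implicit.
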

\begin{proof} 
Let $y\in\Lambda (\Gamma_\mu^*)$ and
 $Y_1,\ldots,Y_r$ be two by two distinct elements of $\mathcal W(y)$ that realize $\beta$, 
i.e.\ such that $Y_k \in \mathfrak A(X_\mu)$, $\dim(Y_k)= d_0$ and $\upsilon_s^y(Y_k)=\beta$
for any $k=1,\ldots, r$.
Since the sets $Y_1,\cdots, Y_{r} $ are two by two distinct, 
for any $1\leq i< j\leq r$, 
the intersection 
$Y_i \cap Y_j $ is an algebraic set of dimension strictly smaller than
$d_0$, in view of Lemma \ref{lemma-algsubs1} it holds that $\upsilon_s^{y}(Y_i \cap Y_j) = 0.$
This implies $\beta r = \upsilon_s^y (Y_1\cup\cdots\cup Y_{r})$.
Taking into account Lemma \ref{lemmaSG-001} 
and the bound $\upsilon_s^{y} (g^{-1}(Y_1\cup\cdots\cup Y_{r})) \leq \beta r$, we have
\begin{align*} 
\beta r &= \upsilon_s^y (Y_1\cup\cdots\cup Y_{r}) \\
&= \int_{\mathbb G} \upsilon_s^{g^*y} \left( g^{-1}(Y_1\cup\cdots\cup Y_{r}) \right) q_s^*(g,y) \mu(dg) 
\leq \beta r.
\end{align*}
This implies that
\begin{align*} 
\int_{\mathbb G} \upsilon_s^{g^*y} \left( g^{-1}(Y_1\cup\cdots\cup Y_{r}) \right) q_s^*(g,y) \mu(dg)  = \beta r.
\end{align*}
By the maximum principle (Lemma \ref{lemma-Princ-max}), it follows that $\mu$-a.s.\ on $\mathbb G$, and therefore 
following the same reasoning as in the proof of Lemma \ref{lemma-ccc102} for all $g\in \supp \mu$,
\begin{align*} 
\upsilon_s^{g^*y} \left( g^{-1}(Y_1\cup\cdots\cup Y_{r}) \right)  = \beta r.
\end{align*}
Note that $ \upsilon_s^{g^*y} (g^{-1}Y_i ) \leq \beta$, so that for all $g\in \supp \mu$, 
\begin{align*} 
\beta r = \upsilon_s^{g^*y} \left( g^{-1}(Y_1\cup\cdots\cup Y_{r}) \right)    
\leq  \sum_{i=1}^{r} \upsilon_s^{g^*y} (g^{-1}Y_i ) \leq \beta r.
\end{align*}
This implies that $\upsilon_s^{g^*y} (g^{-1}Y_i )= \beta$ for all $g\in \supp \mu$ and $i =1,\cdots,r.$
Thus $g^{-1} \mathcal W (y) \subset \mathcal W (g^*y)$ for all $g\in \supp \mu$. 
Noticing that both sets have the same cardinality $r$, we obtain that 
$g^{-1} \mathcal W (y) = \mathcal W (g^*y)$ for all $g\in \supp \mu$. 
The desired assertion follows. 
\end{proof}


\begin{proof}[Proof of Proposition \ref{lemma-dimension001}]
The proof is similar to that of Proposition \ref{lemma-dimension001aa}.
Set
\begin{align*} 
\mathcal W =   \bigcup_{y\in \Lambda(\Gamma_{\mu}^*)}  \mathcal W (y). 
\end{align*}
By Lemma \ref{lemma-ccc104}, the set $\mathcal W$ is $\Gamma_{\mu}$-invariant:
 $\Gamma_{\mu} \mathcal W = \mathcal W.$
By Lemma \ref{lemma-ccc103}, the set $\mathcal W$ is finite, 
hence 
$Z: = \bigcup_{Y\in \mathcal W }  Y$
is a nonempty 
$\Gamma_{\mu}$-invariant algebraic subset in $X_{\mu}$.
Since $\Gamma_{\mu}$ is Zariski dense in $H_{\mu} = \rm{Zc} (\Gamma_{\mu})$,
the algebraic set $Z$ is $H_{\mu}$-invariant.
By Lemma \ref{Lemma-subvarH001}
$X_{\mu}$ is the minimal $H_{\mu}$ invariant algebraic subset, so $X_{\mu}\subset Z$.
On the other hand, we have $Z=\bigcup_{Y\in \mathcal W }  Y \subset X_{\mu}$,
therefore, we get $Z= X_{\mu}$.
This reads as $X_{\mu}=\bigcup_{Y\in \mathcal W }  Y$.
From Lemma \ref{Lemma-subvarH002}, we know that $X_{\mu}$ is irreducible,
therefore, we obtain that $X_{\mu}= Y$ for some $Y$ in $\mathcal W$.
In particular, it holds that $\dim (Y) = d_0 = \dim(X_{\mu})$, which concludes 
the proof of Proposition \ref{lemma-dimension001}.
\end{proof}

We now are prepared to prove 
 Propositions \ref{prop-Y_0nu_s001} and \ref{prop-Y_0nu_s002}.
The proof is based on Proposition \ref{lemma-dimension001} 
and the arguments already used in Section \ref{section 2}.


\begin{proof}[Proof of 
Propositions \ref{prop-Y_0nu_s001} and \ref{prop-Y_0nu_s002}]
Let $Y$ be an algebraic subset of the set $X_\mu$ ($Y\subset X_\mu$) with $\nu_s (Y)>0$.
As the set $Y$ can be reducible, we decompose it as a finite union of 
irreducible algebraic subsets. Specifically, 
according to Proposition 1.5 of \cite{hartshorne},
there exist irreducible algebraic subsets
$Z_1,\ldots, Z_r$ 
such that  $Y=Z_1 \cup\ldots\cup Z_r$.
From $\nu_s(Y) >0$, it follows that $\nu_s(Z_k) >0$ for some $1 \leq k \leq r$.  
Recalling that $d_0$ is the minimal dimension of irreducible algebraic subsets $U$ satisfying
$\nu_s(U)>0$, 
we get $\dim(Z_k) \geq d_0$.
By Proposition \ref{lemma-dimension001}, we know that $\dim (X_{\mu}) = d_0$.
Hence we have $\dim(Z_k) \geq d_0= \dim (X_{\mu})$. 
This implies that $Z_k= X_{\mu}$, hence $Y=X_\mu$.
The latter obviously proves
the claims of Propositions \ref{prop-Y_0nu_s001} and \ref{prop-Y_0nu_s002}. 
\end{proof}

We end the section by establishing Theorems \ref{Th-main-subsets001} and \ref{proposition-exist001}.

\begin{proof}[Proof of  Theorems \ref{Th-main-subsets001} and \ref{proposition-exist001}]
Let $Y$ be an algebraic subset of the projective space $\mathbb P^{d-1}_{\mathbb C}$. 
If $X_\mu \subset Y$, we have 
$\nu_s(Y)\geq \nu_s(X_\mu) =1$, by Lemma \ref{lemma-supp of mes nu}. 
Otherwise, $Y\cap X_\mu$ is a proper algebraic subset of $X_\mu$.
Since $\nu_s(X_\mu)=1,$ 
it follows that $\nu_s(Y)=\nu_s(Y\cap  X_\mu)=0,$ 
according to Propositions \ref{prop-Y_0nu_s001} and \ref{prop-Y_0nu_s002} for $s>0$ and $s<0$ respectively. 
The assertion of both theorems follows now from the fact that
an algebraic subset of $\mathbb P^{d-1}$ is the intersection of an algebraic subset of 
the projective space $\mathbb P^{d-1}_{\mathbb C}$ with  $\mathbb P^{d-1}$.  
\end{proof}
\section{Proof of local limit theorems for coefficients}

In this section we show how to apply the $0$-$1$ law for the stationary measure $\nu$ to establish a local limit theorem 
for coefficients $\langle f, G_n x \rangle$ of products of random matrices, 
which to the best of our knowledge cannot be found in the literature.

\subsection{Auxiliary results} \label{secAuxres001}

Let us fix a non-negative density function $\rho$ on $\mathbb{R}$ with $\int_{\mathbb{R}} \rho(u) du = 1$, 
whose Fourier transform $\widehat{\rho}$ is supported on $[-1,1]$. 
For any $0< \ee < 1$, define the rescaled density function $\rho_{\ee}$ by
$\rho_{\ee}(u) = \frac{1}{\ee} \rho(\frac{u}{\ee})$, $u \in \mathbb R,$ 
whose Fourier transform has a compact support on $[-\ee^{-1},\ee^{-1}]$. 
Set $\mathbb{B}_{\ee}(u) = \{u' \in\mathbb{R}: |u' - u| \leq \ee \}$. 
For any non-negative integrable function $\psi$, we define
\begin{align}\label{smoo001}
{\psi}^+_{\ee}(u) = \sup_{u'\in\mathbb{B}_{\ee}(u)} \psi(u') 
\quad  \text{and}  \quad 
{\psi}^-_{\ee}(u) = \inf_{u'\in\mathbb{B}_{\ee}(u)} \psi(u'), 
\quad  u \in \mathbb{R}. 
\end{align}

We need the following smoothing inequality from \cite{GLL17}, 
which gives two-sided bounds for the function $\psi$.

\begin{lemma}  \label{estimate u convo}
Suppose that  $\psi$ is a non-negative integrable function and that 
${\psi}^+_{\ee}$ and ${\psi}^-_{\ee}$ are measurable for any $\ee>0$. 
Then, 
there exists a positive constant $C_{\rho}(\ee)$ with $C_{\rho}(\ee) \to 0$ as $\ee \to 0$,
such that 
\begin{align}
{\psi}^-_{\ee}\!\ast\!\rho_{\ee^2}(u) - 
\int_{|w| \geq \ee} {\psi}^-_{\ee}(u - w) \rho_{\ee^2}(w) dw
\leq  \psi(u) \leq (1+ C_{\rho}(\ee))
{\psi}^+_{\ee}\!\ast\!\rho_{\ee^2}(u),  \quad u \in \mathbb{R}.   \nonumber
\end{align}
\end{lemma}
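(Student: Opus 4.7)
The plan is to prove both bounds by the same mechanism: split the convolution $\int\psi^{\pm}_{\ee}(u-w)\rho_{\ee^2}(w)\,dw$ according to whether $|w|\leq\ee$ or $|w|>\ee$, and on the first region apply the pointwise inequalities
$$\psi^-_{\ee}(u-w)\;\leq\;\psi(u)\;\leq\;\psi^+_{\ee}(u-w)\qquad(|w|\leq\ee),$$
which follow directly from the definitions in \eqref{smoo001} (any point in $\mathbb B_{\ee}(u-w)$ lies within distance $\ee$ of $u$ as soon as $|w|\leq\ee$, so $u$ itself is an admissible test point in the $\sup$ and the $\inf$). The key quantitative input is that the tail mass
$$c(\ee)\;:=\;\int_{|w|>\ee}\rho_{\ee^2}(w)\,dw\;=\;\int_{|t|>1/\ee}\rho(t)\,dt$$
tends to $0$ as $\ee\to 0$ by the integrability of $\rho$; this is a manifestation of the fact that $\rho_{\ee^2}$ is an approximate identity concentrated on a scale $\ee^2$ much smaller than the smoothing scale $\ee$.

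For the upper bound, dropping the tail contribution by non-negativity gives
$$\psi^+_{\ee}\ast\rho_{\ee^2}(u)\;\geq\;\int_{|w|\leq\ee}\psi^+_{\ee}(u-w)\rho_{\ee^2}(w)\,dw\;\geq\;\psi(u)\bigl(1-c(\ee)\bigr).$$
Choosing $\ee$ small enough that $c(\ee)<1$ and setting $C_{\rho}(\ee):=c(\ee)/(1-c(\ee))$ produces a constant with $C_{\rho}(\ee)\to 0$ and yields the right-hand inequality.

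For the lower bound, the same decomposition gives
$$\psi^-_{\ee}\ast\rho_{\ee^2}(u)\;\leq\;\psi(u)\!\int_{|w|\leq\ee}\!\rho_{\ee^2}(w)\,dw+\int_{|w|\geq\ee}\!\psi^-_{\ee}(u-w)\rho_{\ee^2}(w)\,dw\;\leq\;\psi(u)+\int_{|w|\geq\ee}\!\psi^-_{\ee}(u-w)\rho_{\ee^2}(w)\,dw,$$
which rearranges directly to the left-hand inequality.

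There is no substantive obstacle here: the statement is a purely analytic smoothing lemma, and the measurability hypothesis on $\psi^{\pm}_{\ee}$ is imposed precisely so that the two convolutions are well defined. The content of the lemma is the quantitative control by $C_{\rho}(\ee)$, which reduces to the elementary tail estimate $\int_{|t|>1/\ee}\rho(t)\,dt\to 0$; none of the paper's random-matrix or algebraic-geometry machinery is invoked.
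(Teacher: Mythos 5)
Your proof is correct. Note that the paper does not actually prove Lemma \ref{estimate u convo} --- it is imported from \cite{GLL17} --- and your argument (splitting the convolution over $|w|\leq\ee$ and $|w|>\ee$, using the pointwise bounds $\psi^-_{\ee}(u-w)\leq\psi(u)\leq\psi^+_{\ee}(u-w)$ for $|w|\leq\ee$, and the tail estimate $c(\ee)=\int_{|t|>1/\ee}\rho(t)\,dt\to0$, which gives $C_{\rho}(\ee)=c(\ee)/(1-c(\ee))$) is exactly the standard mechanism behind that reference, so there is nothing substantive to compare against. As a minor remark, no restriction to small $\ee$ is needed to have $c(\ee)<1$: since $\widehat{\rho}$ is compactly supported, $\rho$ is (the restriction of) an entire function and cannot vanish on all of $[-1/\ee,1/\ee]$, so $c(\ee)<1$ for every $\ee>0$.
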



Define the perturbed operator $P_{it}$ as follows: 
for any $t \in \bb R$ and $\varphi \in \mathcal{C}(\bb P^{d-1})$, 
\begin{align}\label{Def_P_it}
P_{it} (\varphi)(x) = \int_{\bb G} e^{it ( \sigma (g, x) - \lambda ) }  \varphi (g x) \mu(dg),  
\quad  x \in \bb P^{d-1}. 
\end{align}
The following proposition which is taken from \cite{XGL20b}  
will be used in the proof of Theorem \ref{Thm_LLT_a}.
Recall that $\nu$ is the invariant measure of the Markov chain $G_n x$ 
on the projective space $\bb P^{d-1}$.
Let $\varphi$ be a $\gamma$-H\"{o}lder continuous function on $\bb P^{d-1}$. 
Assume that $\psi: \mathbb R \mapsto \mathbb C$
is a continuous function with compact support in $\mathbb{R}$, 
and moreover, $\psi$ is differentiable in a small neighborhood of $0$ on the real line.

\begin{proposition} \label{Prop Rn limit1}
Assume conditions \ref{Cond strong-irred}, \ref{Cond proxim} and \ref{Two sided exponential moment}. 
Then, there exist constants $\delta >0$, $c >0$, $C >0$ such that for all $x\in \mathbb{P}^{d-1}$, 
$|l|\leq \frac{1}{\sqrt{n}}$, $\varphi \in \mathcal{B}_{\gamma}$ and $n \geq 1$, 
\begin{align*} 
&  \left| \sigma \sqrt{n}  \,  e^{ \frac{n l^2}{2 \sigma^2} }
\int_{\mathbb R} e^{-it l n} P^{n}_{it}(\varphi)(x) \psi (t) dt
  - \sqrt{2\pi} \nu(\varphi) \psi(0) \right|   \nonumber\\
& \leq  \frac{ C }{ \sqrt{n} } \| \varphi \|_\gamma 
  + \frac{C}{n} \|\varphi\|_{\gamma} \sup_{|t| \leq \delta} \big( |\psi(t)| + |\psi'(t)| \big)
  + Ce^{-cn} \|\varphi\|_{\gamma} \int_{\bb R} |\psi(t)| dt. 
\end{align*}
\end{proposition}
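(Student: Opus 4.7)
\medskip

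\noindent\textbf{Proof proposal for Proposition \ref{Prop Rn limit1}.}

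The plan is to exploit the perturbation theory of the transfer operator $P_{it}$ in a neighbourhood of $t=0$, as in the proofs of Le Page and the subsequent spectral gap literature already invoked in Section \ref{sec-stationarymes001}. Since the centring by $\lambda$ ensures $P_0=P$ has leading eigenvalue $1$ with eigenfunction $\mathbf 1$ and eigenmeasure $\nu$, there exists $\delta>0$ and holomorphic mappings $t\mapsto\lambda(t)\in\bb C$, $t\mapsto N(t)\in \mathrm{End}(\mathscr B_\gamma)$ and a bounded operator $L(t)$ on $(-\delta,\delta)$ such that for $|t|\leq\delta$ one has the decomposition
\begin{equation*}
P_{it}^n=\lambda(t)^n N(t)+L(t)^n,\qquad \|L(t)^n\|_{\mathscr B_\gamma\to\mathscr B_\gamma}\leq C\rho_0^n,
\end{equation*}
with $\rho_0<1$, and furthermore $\lambda(0)=1$, $\lambda'(0)=0$, $\lambda''(0)=-\sigma^2$, $N(0)\varphi=\nu(\varphi)\mathbf 1$. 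First I would split the integral on $\mathbb R$ into the regions $|t|\leq\delta$ and $|t|>\delta$.

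For the region $|t|>\delta$ the cocycle is non-arithmetic in the sense of the spectral theory for $P_{it}$ (already established in the works cited before the proposition), so $\|P_{it}^n\varphi\|_\infty\leq C e^{-cn}\|\varphi\|_\gamma$ uniformly on compact sets of $t$ away from $0$. Combined with the fact that $\psi$ has compact support, this contributes the term $Ce^{-cn}\|\varphi\|_\gamma\int|\psi(t)|dt$. For the region $|t|\leq\delta$ I would insert the spectral decomposition, which gives two pieces: a ``remainder'' term $\int_{|t|\leq\delta}e^{-itln}L(t)^n\varphi(x)\psi(t)dt$ bounded by $C\rho_0^n\|\varphi\|_\gamma\int|\psi|$, to be absorbed into the $Ce^{-cn}$ error, and a ``main'' term
\begin{equation*}
J_n:=\int_{|t|\leq\delta}e^{-itln}\lambda(t)^n N(t)\varphi(x)\psi(t)\,dt.
\end{equation*}

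The core of the proof is the asymptotic analysis of $J_n$. I would perform the change of variables $t=s/\sqrt n$, which rescales the cutoff to $|s|\leq\delta\sqrt n$. Using the Taylor expansion $\log\lambda(t)=-\frac{\sigma^2}{2}t^2+O(|t|^3)$ and writing $N(t)\varphi(x)=\nu(\varphi)+O(|t|\,\|\varphi\|_\gamma)$, $\psi(s/\sqrt n)=\psi(0)+\frac{s}{\sqrt n}\psi'(0)+o(1/\sqrt n)$, the leading behaviour is
\begin{equation*}
\sqrt n\, J_n\approx \nu(\varphi)\psi(0)\int_{\mathbb R}e^{-isl\sqrt n}e^{-\sigma^2 s^2/2}ds,
\end{equation*}
and completing the square yields $\sqrt{2\pi}/\sigma\cdot e^{-nl^2/(2\sigma^2)}$ after the contour shift permitted by the Gaussian. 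Multiplying by $\sigma\sqrt n\,e^{nl^2/(2\sigma^2)}$ produces the target quantity $\sqrt{2\pi}\nu(\varphi)\psi(0)$. The remainder terms produced by the Taylor expansions give contributions of order $\|\varphi\|_\gamma/\sqrt n$ from the cubic correction to $\log\lambda(t)$ and from the Lipschitz behaviour of $N(t)$; the contributions from $\psi'(0)$ and from the truncation at $|s|=\delta\sqrt n$ (which requires integrating by parts once in $t$, since $\psi$ is only smooth near $0$) produce the $\frac{C}{n}\|\varphi\|_\gamma\sup_{|t|\leq\delta}(|\psi(t)|+|\psi'(t)|)$ term.

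The hard part will be bookkeeping the cutoff at $|t|=\delta$: when we cut off at this scale and re-express the integral in terms of the rescaled variable $s=t\sqrt n$, the boundary contribution is not automatically Gaussian-small, so the standard trick is to smoothly extend the integrand beyond $|s|\leq\delta\sqrt n$ using an integration by parts against $e^{-isl\sqrt n}$ (or equivalently against $e^{-itln}$). The factor $1/(ln)$ appearing in such an integration by parts is controlled by the hypothesis $|l|\leq 1/\sqrt n$ only partially, so one has to split once more into $|l|\leq 1/n$ and $1/n<|l|\leq 1/\sqrt n$ and handle the two regimes separately. In each case, the $\sup$-norms of $\psi$ and $\psi'$ on $[-\delta,\delta]$ appear naturally and yield precisely the $\frac{C}{n}\|\varphi\|_\gamma\sup_{|t|\leq\delta}(|\psi(t)|+|\psi'(t)|)$ contribution. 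All other error terms are of order $\|\varphi\|_\gamma/\sqrt n$ or exponentially small in $n$, which matches the announced bound.
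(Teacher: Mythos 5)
You should first be aware that the paper does not prove Proposition \ref{Prop Rn limit1} at all: it is imported verbatim from the companion paper \cite{XGL20b}, so there is no internal proof to compare your argument against. That said, your skeleton is the standard (and surely the intended) one: spectral decomposition $P_{it}^n=\lambda(t)^nN(t)+L(t)^n$ on $|t|\leq\delta$ with $\lambda(0)=1$, $\lambda'(0)=0$, $\lambda''(0)=-\sigma^2$, $N(0)\varphi=\nu(\varphi)\mathbf 1$ and $\|L(t)^n\|\leq C\rho_0^n$; exponential decay of $P_{it}^n$ on the (compact, since $\psi$ has compact support) region $|t|>\delta$ by non-arithmeticity; then the rescaling $t=s/\sqrt n$ and the Gaussian identity $\int_{\mathbb R}e^{-isl\sqrt n-\sigma^2s^2/2}\,ds=\frac{\sqrt{2\pi}}{\sigma}e^{-nl^2/(2\sigma^2)}$, which produces the main term $\sqrt{2\pi}\,\nu(\varphi)\psi(0)$ after multiplication by $\sigma\sqrt n\,e^{nl^2/(2\sigma^2)}$ (this prefactor being bounded for $|l|\leq1/\sqrt n$).

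Two steps of your error analysis, however, do not work as written. First, the integration by parts against $e^{-itln}$ that you invoke to control the cutoff at $|t|=\delta$ is both unnecessary and broken: the truncation error is already exponentially small because $|\lambda(t)|^n\leq e^{-cnt^2}$ on $[-\delta,\delta]$ (and the Gaussian tail handles extending the rescaled integral to all of $\mathbb R$), whereas the factor $1/(ln)$ your integration by parts produces is uncontrolled as $l\to0$; the split into $|l|\leq 1/n$ and $1/n<|l|\leq1/\sqrt n$ does not repair the first regime, where $1/(ln)$ can be arbitrarily large. Second, your claim that the $\psi'(0)$ and truncation contributions yield the $\frac{C}{n}\|\varphi\|_\gamma\sup_{|t|\leq\delta}(|\psi|+|\psi'|)$ term is quantitatively wrong: for $|l|$ of order $1/\sqrt n$ the linear term of $\psi$ contributes at order $1/\sqrt n$, not $1/n$ (take $\varphi\equiv1$, $l=1/\sqrt n$; the expansion leaves a term of size comparable to $|\psi'(0)|/(\sigma^2\sqrt n)$), and the corrections coming from $N(t)\varphi-\nu(\varphi)=O(|t|)\|\varphi\|_\gamma$ and from the cubic term of $\log\lambda(t)$ are likewise of order $\sup_{|t|\le\delta}|\psi|\cdot\|\varphi\|_\gamma/\sqrt n$. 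These order-$1/\sqrt n$, $\psi$-dependent errors can only be absorbed into the first term $\frac{C}{\sqrt n}\|\varphi\|_\gamma$, which is legitimate because $\psi$ is fixed before the constants are chosen, but it means your bookkeeping has to be redone; as stated, the bound you derive is not the inequality claimed. With the cutoff handled by the Gaussian decay and the error terms reassigned accordingly, your strategy does go through.
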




The explicit dependence of the bound on the target function $\varphi$
as well as the rate of convergence established in Proposition \ref{Prop Rn limit1}
will be used in the proof of Theorem \ref{Thm_LLT_a}. 

\subsection{Proof of Theorem \ref{Thm_LLT_a}}

The goal of this subsection is to establish Theorem \ref{Thm_LLT_a}
using Theorems \ref{Lemma-Fursten-set} and \ref{Lemma-Fursten-set001}, 
Lemma \ref{Lem_Regularity_nus} and Proposition \ref{Prop Rn limit1}. 

\begin{proof}[Proof of Theorem \ref{Thm_LLT_a}]
The basic idea is to decompose the logarithm of the coefficient $\log| \langle f, G_n v \rangle |$
as a sum of the norm cocycle $\sigma (G_n,x)$ 
and of $\log \delta(y,G_n x)$, where $x = \mathbb R v \in \mathbb P^{d-1}$ 
and $y= \mathbb R f \in (\mathbb P^{d-1})^*$ with $|v|=1$ and $|f|=1$, see \eqref{eq-represent001}.
Specifically, 
we have the decomposition: 
\begin{align}\label{ScaProLimAn 01}
J : & =    \sigma \sqrt{2 \pi n} \,  
 \mathbb{P} \Big(  \log |\langle f, G_n v \rangle|  - n \lambda \in [a_1, a_2]  \Big)  \nonumber\\
& =    \sigma \sqrt{2 \pi n} \,     
\mathbb{P} \Big(  S_n^v + \log \delta(y, G_n x) \in [a_1, a_2]  \Big), 
\end{align}
where 
\begin{align*}
S_n^v = \log |G_n v| - n\lambda. 
\end{align*}
For any fixed small constant $0< \eta < 1$, we denote 
\begin{align*}
I_k: = (-\eta k, -\eta(k-1)],  \quad  k \in \mathbb{N}^*. 
\end{align*}
In the sequel, let $\floor{a}$ denote the integral part of $a$. 
In order to apply limit theorems established for the norm cocycle $\log |G_n v|$,
we are led to discretize the function $x \mapsto \log \delta(y, x)$ to obtain that: 
with a sufficiently large constant $C_1 >0$,  
\begin{align*} 
J & =  \sigma  \sqrt{2 \pi n}   \,     
  \mathbb{P} 
  \Big(  S_n^v + \log \delta(y, G_n x) \in [a_1, a_2], \  \log \delta(y, G_n x) \leq -\eta \floor{C_1 \log n} \Big) 
   \nonumber\\
& \quad  +  \sigma  \sqrt{2 \pi n}  \,   
 \sum_{k =1}^{\floor{C_1 \log n}} \mathbb{P} 
 \Big(  S_n^v + \log \delta(y, G_n x) \in [a_1, a_2], \  \log \delta(y, G_n x) \in I_k  \Big)     
  \nonumber\\
& =:  J_1 + J_2. 
\end{align*}

\textit{Upper bound of $J_1$.} 
The term $J_1$ is easily handled by using the fact that, with very small probability, 
the Markov chain $(G_n x)_{n \geq 0}$ stays close to the hyperplane $\ker f$. 
Indeed, by Lemma \ref{Lem_Regularity_nus}, we get that there exists a constant $c_{\eta} >0$ such that
\begin{align}\label{Pf_LLTUpp_J1}
J_1 \leq  \sigma  \sqrt{2 \pi n}  \,  \mathbb{P} \Big( \log \delta(y, G_n x) \leq -\eta \floor{C_1 \log n} \Big)
        \leq  \sigma  \sqrt{2 \pi n}  \,  e^{-c_{\eta} \floor{C_1 \log n} } \to 0,
\end{align}
as $n \to \infty$, since the constant $C_1 >0$ is sufficiently large. 

\textit{Upper bound of $J_2$.} 
Note that on the set $\{\log \delta(y, G_n x) \in I_k\}$, we have
$0 < \log \delta(y, G_n x) + \eta k \leq \eta$. It follows that 
\begin{align*}
J_2 \leq  \sigma  \sqrt{2 \pi n}    \,      
  \sum_{k =1}^{ \floor{C_1 \log n} }
\mathbb{E}  \Big( \mathbbm{1}_{\{ S_n^v - \eta k \in [a_1,  a_2 + \eta] \}} 
                             \mathbbm{1}_{\{ \log \delta(y, G_n x)  \in I_k \}} \Big).
\end{align*}
We denote $\psi_1(u) = \mathbbm{1}_{\{ u \in [a_1,  a_2 + \eta] \}}$, $u \in \mathbb{R}$. 
Recall that $\psi^+_{\ee}(u) = \sup_{ u' \in \mathbb{B}_{\ee}(u) } \psi_1(u')$
is defined by \eqref{smoo001}, for $0 < \ee < 1$. 
Using Lemma \ref{estimate u convo}, we get
\begin{align} \label{ScaProLimAn Bn 01}
& J_2 \leq (1+ C_{\rho}(\ee))
  \sigma  \sqrt{2 \pi n}   \,  
\sum_{k =1}^{ \floor{C_1 \log n} }
\mathbb{E}  \left[ (\psi^+_{\ee}\!\ast\!\rho_{\ee^2})( S_n^v - \eta k )  
\mathbbm{1}_{\{ \log \delta(y, G_n x) \in I_k \}} \right].
\end{align} 
For small enough constant $\ee_1 >0$, 
we define the density function $\bar{\rho}_{\ee_1}$ 
by setting $\bar{\rho}_{\ee_1}(u): = \frac{1}{\ee_1}(1 - \frac{|u|}{\ee_1}) $
for $u \in [-\ee_1, \ee_1]$, and $\bar{\rho}_{\ee_1}(u) = 0$ otherwise.
For any $k \in \mathbb{N}^*$, 
with the notation $\chi_k(u):= \mathbbm{1}_{\{u \in I_k \}}$
and $\chi_{k, \ee_1}^+(u) = \sup_{ u' \in \mathbb{B}_{\ee_1}(u) } \chi_k(u')$,
one can check that 
\begin{align} \label{SmoothIne Holder 01}
\chi_k(u) \leq 
(\chi_{k, \ee_1}^+ * \bar{\rho}_{\ee_1})(u)
\leq \chi_{k, 2\ee_1}^+(u), \quad  u \in \mathbb{R}.
\end{align}
For short, we denote $\varphi_k^y(x) = (\chi_{k, \ee_1}^+ * \bar{\rho}_{\ee_1}) (\log \delta(y,x))$, 
$x \in \bb P^{d-1}$,
which is H\"{o}lder continuous on $\mathbb{P}^{d-1}$. 
Using \eqref{SmoothIne Holder 01} leads to 
\begin{align}  \label{ScalarBn a}
J_2 \leq  (1+ C_{\rho}(\ee))  \sigma   \sqrt{2 \pi n}      
\sum_{k =1}^{ \floor{C_1 \log n} }
\mathbb{E}  \left[ \varphi_k^y (G_n x)
({\psi}^+_{\ee}\!\ast\!\rho_{\ee^2})( S_n^v - \eta k )  \right].  
\end{align}
Denote by 
$\widehat{{\psi}}^+_{\ee}$ the Fourier transform of ${\psi}^+_{\ee}$.
By the Fourier inversion formula, 
\begin{align*}
\psi^+_{\ee}\!\ast\!\rho_{\ee^{2}}(u)
= \frac{1}{2\pi} \int_{\mathbb{R}} e^{itu}
\widehat {\psi}^+_{\ee}(t) \widehat\rho_{\ee^{2}}(t)dt,  \quad  u \in \mathbb R. 
\end{align*}
Substituting $u = S_n^v - \eta k$ 
and using Fubini's theorem, we obtain
\begin{align} \label{Scal Bnxl 01}
J_2 \leq  (1+ C_{\rho}(\ee))
  \sigma   \sqrt{\frac{n}{2\pi}}   
  \sum_{k =1}^{ \floor{C_1 \log n} }  \int_{\mathbb{R}} e^{-it \eta k} 
P^{n}_{it} (\varphi_k^y) (x)
\widehat {\psi}^+_{\ee}(t) \widehat\rho_{\ee^{2}}(t) dt, 
\end{align}
where $P_{it}$ is the perturbed operator defined by \eqref{Def_P_it}. 
We shall apply Proposition \ref{Prop Rn limit1} to deal with each integral in \eqref{Scal Bnxl 01} for fixed $k$. 
Note that $e^{ \frac{C k^2}{n} } \to 1$ as $n \to \infty$, uniformly in $1 \leq k \leq \floor{C_1 \log n}$. 
Since the function $\widehat {\psi}^+_{\ee} \widehat\rho_{\ee^{2}}$ 
is compactly supported in $\mathbb{R}$, 
using Proposition \ref{Prop Rn limit1}
with $\varphi = \varphi_k^y$, $\psi=\widehat {\psi}^+_{ \ee} \widehat\rho_{\ee^{2}}$
and  $l = \frac{\eta k}{n}$, 
we obtain that for any fixed $k \geq 1$, 
as $n \to \infty$, 
uniformly in $ f \in (\mathbb R^d)^*$ and $v \in \mathbb R^d$ with $| f | =1$ and $|v|=1$,
\begin{align*}  
\bigg| \sigma  \sqrt{\frac{n}{2\pi}} 
  \int_{\mathbb{R}} e^{-it \eta k} P^{n}_{it} (\varphi_k^y) (x)
\widehat {\psi}^+_{\ee}(t) \widehat\rho_{\ee^{2}}(t) dt   
- \widehat {\psi}^+_{\ee}(0) \widehat\rho_{\ee^{2}}(0) \nu(\varphi_k^y) \bigg|
\leq \frac{C}{\sqrt{n}} \| \varphi_k^y \|_{\gamma}. 
\end{align*}
Note that $\widehat\rho_{\ee^{2}}(0) =1$ and 
\begin{align*}
\widehat \psi^+_{\ee}(0)
= \int_{\mathbb{R}} \sup_{y'\in\mathbb{B}_{\ee}(u)} \psi^+_{\ee}(u') du
= \int_{\mathbb{R}} \mathbbm{1}_{\{ u \in [a_1 - \ee, a_2 + \eta + \ee] \}} du
= a_2 - a_1 + \eta + 2 \ee. 
\end{align*} 
One can calculate that $\gamma$-H\"{o}lder norm 
$\| \varphi_k^y \|_{\gamma}$
is dominated by 
$C \frac{ e^{ \eta \gamma k} }{ ( 1 - e^{-2\varepsilon_1} )^{\gamma} },$
uniformly in $y \in (\mathbb P^{d-1})^*$.  
Taking sufficiently small $\gamma>0$, we obtain that
the series $\frac{C}{\sqrt{n}} \sum_{k = 1}^{ \floor{C_1 \log n} }  
  \frac{ e^{ \eta \gamma k} }{ ( 1 - e^{-2\varepsilon_1} )^{\gamma} }$
converges to $0$ as $n \to \infty$. 
Consequently, we are allowed to interchange the limit as $n \to \infty$
and the sum over $k$ in \eqref{Scal Bnxl 01} 
to obtain that, uniformly in $ f \in (\mathbb R^d)^*$ and $v \in \mathbb R^d$ with $| f | =1$ and $|v|=1$,
\begin{align} \label{LimsuBn a}
\limsup_{n \to \infty}  J_{2}  
\leq    (1+ C_{\rho}(\ee)) (a_2 - a_1 + \eta + 2 \ee)
 \sum_{k =1}^{\infty} \nu (\varphi_k^y).  
\end{align}
Observe that for any $x \in \bb P^{d-1}$, 
\begin{align} \label{LimsuBn b}
\varphi_k^y (x)
\leq \mathbbm{1}_{ \big\{ \log \delta(y, \cdot) \in I_k \big\} }(x) 
+ \mathbbm{1}_{ \big\{ \log \delta(y, \cdot) \in I_{k,\ee_1}  \big\} }(x),
\end{align}
where $I_{k,\ee_1} = \big(-\eta k -2 \ee_1, -\eta k \big]
\cup \big(-\eta (k-1), -\eta (k-1) + 2 \ee_1 \big]$. 
For the first part in \eqref{LimsuBn b}, we have that for any $y \in (\bb P^{d-1})^*$, 
\begin{align} \label{ScalPosiMainpart}
\sum_{k =1}^{\infty} \nu  \Big( x \in \bb P^{d-1}:  \log \delta(y, x) \in I_k   \Big)
=  1.
\end{align}
For the second part in \eqref{LimsuBn b},
we need to apply Theorem \ref{Lemma-Fursten-set} and the zero-one law for the stationary measure $\nu$ established 
in Theorem \ref{Lemma-Fursten-set001}. 
By the Lebesgue dominated convergence theorem, we get
\begin{align*}
& E: =  \lim_{\ee_1 \to 0}  \sum_{k =1}^{\infty} 
\nu  \Big( x \in \bb P^{d-1}:  \log \delta(y, x) \in I_{k, \ee_1}   \Big) \nonumber\\
& =  \sum_{k =1}^{\infty} \nu  \Big( x \in \bb P^{d-1}:  \log \delta(y, x) = -\eta k  \Big)
   + \sum_{k =1}^{\infty} \nu  \Big( x \in \bb P^{d-1}:  \log \delta(y, x) = -\eta (k-1)  \Big) \nonumber\\
& = 2 \sum_{k =1}^{\infty} \nu  \Big( x \in \bb P^{d-1}:  \log \delta(y, x) = -\eta k  \Big),
\end{align*}
where in the last equality we used Theorem \ref{Lemma-Fursten-set}. 
We are going to apply Theorem \ref{Lemma-Fursten-set001} to prove that $E = 0$. 
In fact, for any  $y \in (\bb P^{d-1})^*$ and 
any set $Y_{y,t} = \{ x \in \bb P^{d-1}:  \log \delta(y,x) = t \}$ with $t \in (-\infty, 0)$, 
by Theorem \ref{Lemma-Fursten-set001} it holds that either $\nu(Y_{y,t}) = 0$ or $\nu(Y_{y,t}) = 1$. 
If $\nu(Y_{y,t}) = 0$ for all $y \in (\bb P^{d-1})^*$ and $t \in (-\infty, 0)$, then clearly we get that $E = 0$. 
If $\nu(Y_{y_0, t_0}) = 1$ for some $y_0 \in (\bb P^{d-1})^*$ and $t_0 \in (-\infty, 0)$,
then we can always choose $0< \eta <1$ in such a way that $- \eta k \neq t_0$ for all $k \geq 1$,
so that we also obtain $E = 0$ for all $y \in (\bb P^{d-1})^*$. 
Therefore, combining this with \eqref{LimsuBn a}, \eqref{LimsuBn b} and \eqref{ScalPosiMainpart}, 
letting first $\ee \to 0$ and then $\eta \to 0$, 
and noting that $C_{\rho}(\ee) \to 0$ as $\ee \to 0$, 
we obtain the upper bound: 
uniformly in $ f \in (\mathbb R^d)^*$ and $v \in \mathbb R^d$ with $| f | =1$ and $|v|=1$,
\begin{align} \label{ScaProLimBn Upper 01}
\limsup_{n \to \infty} J_{2}  \leq  a_2 - a_1. 
\end{align}

\textit{Lower bound of $J_{2}$.} 
Since $0 < \log \delta(y, G_n x) + \eta k \leq \eta$ on the set $\{\log \delta(y, G_n x) \in I_k\}$, we have
\begin{align*}
J_2 \geq  \sigma  \sqrt{2 \pi n}      
  \sum_{k =1}^{ \floor{C_1 \log n} }
\mathbb{E}  \Big[ \mathbbm{1}_{\{ S_n^v - \eta k \in [a_1 + \eta,  a_2 - \eta] \}} 
                             \mathbbm{1}_{\{ \log \delta(y, G_n x) \in I_k \}} \Big].
\end{align*}
We denote $\psi_2(u) = \mathbbm{1}_{\{ u \in [a_1,  a_2 + \eta] \}}$, $u \in \mathbb{R}$, 
and recall that $\psi^-_{\ee}(u) = \inf_{u'\in\mathbb{B}_{\ee}(u)} \psi_2(u')$
is defined by \eqref{smoo001}, for $0 < \ee < 1$. 
By Lemma \ref{estimate u convo}, we get
\begin{align} \label{ScaProLimAn Bn 0134}
J_2 & \geq  \sigma  \sqrt{2 \pi n}   \sum_{k =1}^{ \floor{C_1 \log n} }
\mathbb{E}  \left[ (\psi^-_{\ee}\!\ast\!\rho_{\ee^2})( S_n^v - \eta k )  
\mathbbm{1}_{\{ \log \delta(y, G_n x) \in I_k \}} \right]  \nonumber\\
& \quad  - \sigma  \sqrt{2 \pi n}   \sum_{k =1}^{ \floor{C_1 \log n} }
  \int_{|w| \geq \ee} \mathbb{E}  \left[ \psi^-_{\ee} ( S_n^v - \eta k -w )  
  \mathbbm{1}_{\{ \log \delta(y, G_n x) \in I_k \}} \right] \rho_{\ee^2}(w) dw  \nonumber\\
& =: J_3 - J_4. 
\end{align} 
For any $k \in \mathbb{N}$, 
define $\chi_k(u):= \mathbbm{1}_{\{u \in I_k \}}$
and $\chi_{k, \ee_1}^-(u) = \inf_{ u' \in \mathbb{B}_{\ee_1}(u) } \chi_k(u')$. 
It is easy to verify that   
\begin{align} \label{SmoothIne Holder 0134}
\chi_k(u) \geq 
(\chi_{k, \ee_1}^- * \bar{\rho}_{\ee_1})(u)
\geq \chi_{k, 2\ee_1}^-(u), \quad  u \in \mathbb{R},
\end{align}
where $\bar{\rho}_{\ee_1}$ is the density function introduced in \eqref{SmoothIne Holder 01}. 
For short, we denote $\tilde \varphi_k^y(x) = (\chi_{k, \ee_1}^- * \bar{\rho}_{\ee_1}) (\log \delta(y, x))$, 
$x \in \bb P^{d-1}$, which is H\"{o}lder continuous on $\mathbb{P}^{d-1}$. 

\textit{Lower bound of $J_3$.} 
Using \eqref{SmoothIne Holder 0134}, we get
\begin{align*}  
J_3 \geq  \sigma   \sqrt{2 \pi n}  \, \sum_{k =1}^{ \floor{C_1 \log n} }
\mathbb{E}  \left[ \tilde \varphi_k^y (G_n x)
({\psi}^-_{\ee}\!\ast\!\rho_{\ee^2})( S_n^v - \eta k )  \right].  
\end{align*}
In an analogous way as in the proof of \eqref{LimsuBn a}, 
we obtain 
\begin{align} \label{Low_Boun_J3}
\liminf_{n \to \infty}  J_3  \geq  (a_2 - a_1 - 2\eta - 2 \ee)
 \sum_{k =1}^{\infty} \nu (\tilde \varphi_k^y).  
\end{align}
Proceeding in a similar way as in the proof of the upper bound \eqref{ScaProLimBn Upper 01} for $J_2$, 
using Theorem \ref{Lemma-Fursten-set001}, 
we can obtain the lower bound for $J_3$: 
uniformly in $ f \in (\mathbb R^d)^*$ and $v \in \mathbb R^d$ with $| f | =1$ and $|v|=1$, 
\begin{align} \label{ScaProLimBnLow_J3}
\liminf_{\eta \to 0} \liminf_{\ee \to 0} \liminf_{n \to \infty} J_3 \geq  a_2 - a_1. 
\end{align}

\textit{Upper bound of $J_4$.} 
Note that $\psi^-_{\ee} \leq \psi$,
then it follows from Lemma \ref{estimate u convo} that
$\psi^-_{\ee} 
\leq (1+ C_{\rho}(\ee))\widehat{\psi}^+_{\ee} \widehat{\rho}_{\ee^2}$. 
Moreover, using \eqref{SmoothIne Holder 01}, we get 
$\mathbbm{1}_{\{ \log \delta(y, G_n x) \in I_k \}} \leq  (\chi_{k, \ee_1}^+ * \bar{\rho}_{\ee_1})(G_n x)$.
Similarly to \eqref{Scal Bnxl 01}, 
we have that $J_4$ defined in \eqref{ScaProLimAn Bn 0134} is bounded from above by
\begin{align*} 
(1+ C_{\rho}(\ee)) \sigma  \sqrt{\frac{n}{2\pi}}  
 \sum_{k =1}^{ \floor{C_1 \log n} } \int_{|w|\geq \ee}  
   \left[   \int_{\mathbb{R}} e^{-it(\eta k + w) }  P^{n}_{it} (\varphi_k^y)(x)
\widehat {\psi}^+_{\ee}(t) \widehat\rho_{\ee^{2}}(t) dt \right] \rho_{\ee^2}(w) dw.
\end{align*}
Applying Proposition \ref{Prop Rn limit1} 
with $\varphi = \varphi_k^y$ 
and $\psi = \widehat \psi^+_{\ee} \widehat\rho_{\ee^{2}}$,
it follows from the Lebesgue dominated convergence theorem that 
\begin{align*}
\liminf_{n\to\infty}  J_4  
\leq (1+ C_{\rho}(\ee)) \sum_{k=1}^{ \floor{C_1 \log n} }
\nu (\varphi_k^y)  \widehat \psi^+_{\ee}(0) \widehat\rho_{\ee^{2}}(0)
\int_{|w| \geq \ee} \rho_{\ee^2}(w) dw  
\to  0,
\end{align*}
as $\ee \to 0$.
Combining this with \eqref{ScaProLimAn Bn 0134} and \eqref{ScaProLimBnLow_J3}, we get 
the lower bound for $J_2$: 
uniformly in $ f \in (\mathbb R^d)^*$ and $v \in \mathbb R^d$ with $| f | =1$ and $|v|=1$, 
\begin{align} \label{lowelast001}
\liminf_{n\to\infty} J_2 \geq  a_2 - a_1.
\end{align}
Putting together \eqref{Pf_LLTUpp_J1}, \eqref{ScaProLimBn Upper 01} and \eqref{lowelast001},
we conclude the proof of Theorem \ref{Thm_LLT_a}. 
\end{proof}



\end{document}